\setlist[enumerate,1]{label=(\roman*)}
\numberwithin{equation}{section}
\newtheorem{theorem}{Theorem}[section]
\newtheorem{lemma}[theorem]{Lemma}
\newtheorem{proposition}[theorem]{Proposition}
\newtheorem{corollary}[theorem]{Corollary}
\theoremstyle{definition}
\newtheorem{definition}[theorem]{Definition}
\newtheorem{remark}[theorem]{Remark}
\newcommand\f\frac
\newcommand\tx\text
\newcommand\intd{\mathop{}\!\mathrm{d}}
\newcommand\ind[1]{1_{#1}}
\newcommand\h{\mathcal H}
\newcommand\Ll{\mathcal L}
\newcommand\loc{\mathrm{loc}}
\newcommand\vint\fint
\newcommand{\mres}{\!\mathbin{\vrule height 1.6ex depth 0pt width
                0.13ex\vrule height 0.13ex depth 0pt width 1.1ex}\!}
\newcommand\BV{\mathrm{BV}}
\DeclareMathOperator\var{var}
\DeclareMathOperator\dist{dist}
\DeclareMathOperator\capa{Cap}
\newcommand\R{\mathbb R}
\newcommand\Z{\mathbb Z}
\newcommand\N{\mathbb N}
\newcommand\Sph{\mathbb S}
\newcommand\Om{\Omega}
\newcommand\eps{\varepsilon}
\newcommand{\ch}{\text{\raise 1.3pt \hbox{$\chi$}\kern-0.2pt}}
\newcommand\restrict[2]{#1|_{#2}}
\newcommand\restrictl[2]{\restrict{#1}{#2}}
\newcommand\restrictzero[2]{#1\,\mres#2}
\newcommand\restrictzerol[2]{\restrictzero{#1}{(#2)}}
\newcommand\borel[1]{\mathcal B(#1)}
\newcommand\borelm[2]{\mathcal B_{#1}(#2)}
\newcommand\meas[1]{\mathcal M(#1)}
\newcommand\measp[1]{\mathcal M^+(#1)}
\newcommand\B{\mathcal B}
\newcommand\C{\mathcal C}
\newcommand\id{\mathrm{Id}}
\newcommand\M{{\mathrm M}}
\newcommand\Ml[1]{\M_{<#1}}
\newcommand\Mg[1]{\M_{\geq#1}}
\newcommand\mb[1]{\mathop{\partial_*}{#1}}
\newcommand\lmo{\Ll}
\newcommand\lm[1]{\lmo(#1)}
\newcommand\smo{\h^{d-1}}
\newcommand\sm[1]{\smo(#1)}
\newcommand\lmlo{\Ll^{d-1}}
\newcommand\lml[1]{\lmlo(#1)}
\newcommand\lmio{\Ll^1}
\newcommand\lmi[1]{\lmio(#1)}
\newcommand\median[2]{\mathrm m(#1,#2)}
\newcommand\partop[2]{{#2}^{\mathrm{#1}}}
\newcommand\gsingular[1]{\partop s{#1}}
\newcommand\gac[1]{\partop a{#1}}
\newcommand\singular[1]{\partop sD{#1}}
\newcommand\cantor[1]{\partop cD{#1}}
\newcommand\ac[1]{\partop aD{#1}}
\newcommand\jump[1]{\partop jD{#1}}
\newcommand\sjump[2]{\partop jD_{#1}#2}
\newcommand{\oset}[3][0ex]{%
  \mathrel{\mathop{#3}\limits^{
    \vbox to#1{\kern-2\ex@
    \hbox{$\scriptstyle#2$}\vss}}}}
\begin{document}

\title{The centered maximal operator removes the non-concave Cantor part from the gradient}

\author{%
Panu Lahti\footnote{
Academy of Mathematics and Systems Science,
Chinese Academy of Sciences,
Beijing 100190, PR China
\texttt{panulahti@amss.ac.cn}
}
\and
Julian Weigt\footnote{
University of Warwick,
Mathematics Institute,
Zeeman Building,
Coventry CV4 7AL,
United Kingdom,
\texttt{julian.weigt@warwick.ac.uk}
}
}

\maketitle

\begin{abstract}
We study regularity of the centered Hardy--Littlewood maximal function $\M f$ of a function
$f$ of bounded variation in $\R^d$, $d\in \N$.
In particular, we show that at $|\cantor f|$-a.e.\ point $x$ where $f$ has a non-concave blow-up, it holds that $\M f(x)>f^*(x)$.
We further deduce from this that if the variation measure of $f$ has no jump part and its Cantor part has non-concave blow-ups,
then BV regularity of $\M f$ can be upgraded to Sobolev regularity.
\end{abstract}

\begingroup
\begin{NoHyper}%
\renewcommand\thefootnote{}\footnotetext{%
2020 \textbf{Mathematics Subject Classification.} 42B25, 26B30.\\%
\textit{Key words and phrases.} Hardy--Littlewood maximal function; regularity; absolute continuity;
Cantor part; blow-up.%
}%
\addtocounter{footnote}{-1}%
\end{NoHyper}%
\endgroup

\section{Introduction}

The so-called
$W^{1,1}$-problem, posed in \cite{MR2041705}, asks whether the Hardy-Littlewood
maximal function of a Sobolev function $f\in W^{1,1}(\R^d)$,
defined (for nonnegative $f$) by
\[
\M f(x)
\coloneqq
\sup_{r>0}
\frac1{\lm{B(x,r)}}\int_{B(x,r)}f(y)\intd\lmo(y),\qquad x\in \R^d,
\]
is also locally in the class $W^{1,1}(\R^d)$, with
\[
\Vert \nabla \M f\Vert_{L^1(\R^d)}\le C\Vert \nabla f\Vert_{L^1(\R^d)}
.
\]
The problem arose as a natural extension of the case $1<p<\infty$,
where the analogous result is known to hold,
as first shown by Kinnunen \cite{Kin}.

Many papers, for example \cite{APL,AlPer09,MR3695894,MR2280193,MR1898539,weigt2024variation},
have considered this $W^{1,1}$-problem for the more general class of functions of bounded variation
$\BV(\R^d)\supset  W^{1,1}(\R^d)$, and for different maximal operators, e.g.\
the \emph{uncentered} maximal function,
see \cite{zbMATH07215904} for a survey.
In particular, Kurka \cite{Kur}
gave a positive answer to the $W^{1,1}$-problem for BV functions in one dimension.
Apart from BV or Sobolev regularity, one can also consider other regularity
properties of the maximal function, such as approximate differentiability \cite{MR2550181}
or quasicontinuity \cite{panubvsobolev}.

An addition to preserving regularity, \cite{APL} also addressed the question of whether maximal operators can even \emph{increase} regularity.
The authors proved that for $f\in\BV(\R)$, the gradient of its uncentered maximal function belongs to $L^1(\R)$, i.e.\ is not only a finite measure but an absolutely continuous one.
For the centered maximal operator this does not hold:
if a function has jumps, so can its centered maximal function.
In general, the variation measure of a BV functions $f$ can be divided into three parts:
\[
	Df=\ac f+ \cantor f+ \jump f,
\]
see \zcref{sec:preliminaries}, \zcref{eq:decomposition of variation measure}.
In \cite{Kur} Kurka also showed that if $f\in W^{1,1}(\R)$, i.e.\ \(\cantor f=\jump f=0\), then also $\nabla\M f\in L^1(\R)$ i.e.\ \(\cantor{\M f}=\jump{\M f}=0\).
This leaves open the questions of whether the maximal function can have a nonzero Cantor part $\cantor{\M f}$.
In addition, one can ask whether a nonzero jump part $\jump{\M f}$ can be caused by anything else than a nonzero $\jump f$.
We address these two questions in the present manuscript.

In \cite{GoKo}, the authors consider the Cantor--Vitali function $f\in \BV((0,1))$,
which is the typical example of a function that does not have jumps but is not absolutely continuous either, in particular $Df=\cantor f\neq0$.
More precisely, $f$ is an increasing continuous function whose variation measure is
\[
Df=\h^{s}(C)^{-1}\restrictzero{\h^{s}}{C},
\]
where $s=\log 2/\log 3$ and $C$ is the usual $1/3$-Cantor set.
The authors show that $\M f(x)>f(x)$ for $|Df|$-a.e.\ $x\in (0,1)$, and from this it follows that
$\M f$ is absolutely continuous.
More generally, in \cite{GoKo} the authors show absolute continuity of $\M f$ for any increasing continuous function
$f\in\BV_{\loc}(\R)$
whose derivative is a finite sum of positive Radon measures $\mu_i$ that are Ahlfors regular with dimension $0<d_i<1$,
i.e.\ 
\begin{align}
\label{eq:gokoassumption}
Df
&=
\cantor f
=
\sum_{j=1}^N \mu_i
,&
\mu_i
&\geq
0
,&
\mu_i(B(x,r))
&\sim
r^{d_i}
,&
0<d_i
&<
1
.
\end{align}

In this manuscript we generalize this result substantially, see \zcref{corollary:gk24}.
We consider all dimensions $d\ge 1$,
and instead of Ahlfors regularity we consider the weaker pointwise condition that
$f$ has a 
non-concave blow-up $|\cantor f|$-almost everywhere, see \zcref{eq:blowup,defi:nonconcaveblowup}.
Then we show the following \zcref*[noref,nocap]{thm:main}.

\begin{theorem}\label{thm:main}
	Let $f\in \BV(\R^d)$ and let $R>0$.
	Let $A\subset \R^d$ be the set of points where $f$ has a non-concave blow-up.
	Then
	\[
	|\cantor f|(A\cap \{\Ml R f=f^*\})=0.
	\]
\end{theorem}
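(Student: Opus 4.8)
The plan is to argue by contradiction: blow up at a point of $|\cantor f|$-density of $E \coloneqq A \cap \{\Ml R f = f^*\}$, transfer the identity $\Ml R f = f^*$ to the blow-up, and then exhibit a rigidity property of the resulting one-variable profile that is incompatible with non-concavity. Suppose $|\cantor f|(E) > 0$. By the Lebesgue density theorem for $|\cantor f|$ and the fine structure of $\BV$ functions, $|\cantor f|$-a.e.\ point of $E$ is simultaneously a point of $|\cantor f|$-density $1$ of $E$, a Lebesgue point of $f$, and a Lebesgue point of the polar vector $\cantor f / |\cantor f|$; fix such a point $x_0$. Since $x_0 \in A$, along some radii $r_j \searrow 0$ and normalising factors $a_j > 0$ there is a non-concave blow-up $g$ of $f$ at $x_0$ in the sense of \zcref{eq:blowup,defi:nonconcaveblowup}. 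Because $x_0$ is a Lebesgue point of the polar vector, this blow-up is essentially one-dimensional: $g(y) = \gamma(\langle y, \nu\rangle)$ for a unit vector $\nu$ and a continuous non-decreasing $\gamma \colon \R \to \R$, where $\mu \coloneqq D\gamma$ is a positive Radon measure on $\R$ which is nonzero (as $g$ is non-constant, being non-concave) and non-atomic (a blow-up at a $|\cantor f|$-point has no jumps); non-concavity of $g$ means precisely that $\gamma$ is not affine.

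Next, transfer the identity $\Ml R f = f^*$, which holds on all of $E$, to the blow-up. For $|\cantor f|$-a.e.\ $x$ close to $x_0$ we have $x \in E$, hence $\vint_{B(x,r)} f \le f^*(x)$ for every $r \in (0,R)$, i.e.\ $\int_{B(x,r)}(f - f^*(x)) \le 0$. Rescaling this inequality around $x_0$ and using that $E$ has $|\cantor f|$-density $1$ at $x_0$ — so that the dilated sets $r_j^{-1}(E - x_0)$ become dense relative to $|Dg|$ — a standard (though, as to precise representatives, slightly delicate) limiting argument yields
\[
\vint_{B(y,\rho)} g \le g^*(y) \qquad \text{for every } \rho > 0 \text{ and } |Dg|\text{-a.e. } y .
\]
Since $g$ depends only on $t = \langle y, \nu\rangle$, integrating out the $d-1$ transversal variables turns the average over $B(y,\rho)$ into a one-dimensional weighted average with explicit weight $w_\rho(s) = c_d\, \rho^{-d}(\rho^2 - s^2)_+^{(d-1)/2}$, $\int w_\rho = 1$; using translation invariance in the transversal directions and non-atomicity of $\mu$, the displayed inequality becomes: for $\mu$-a.e.\ $t_0 \in \R$ and every $\rho > 0$,
\[
Q_\rho(t_0) \coloneqq \int_0^\rho \bigl( \mu((t_0, t_0 + s]) - \mu([t_0 - s, t_0)) \bigr)\, w_\rho(s) \intd s \le 0 .
\]

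The heart of the argument is that these inequalities force $\mu$ to be translation invariant. A computation with Fubini's theorem represents $Q_\rho$ as a convolution $Q_\rho = \Psi_\rho * \mu$, where $\Psi_\rho(a) = -\sign(a) \int_{|a|}^\rho w_\rho(s)\intd s$ for $|a| \le \rho$, and $\Psi_\rho \equiv 0$ otherwise, is an \emph{odd} function. By the oddness of $\Psi_\rho$ and the symmetry of the associated double integral, $\int_\R Q_\rho \intd\mu = \iint \Psi_\rho(t_0 - u)\intd\mu(u)\intd\mu(t_0) = -\int_\R Q_\rho \intd\mu$, so (after a routine localisation, $\mu$ being only locally finite) $\int_\R Q_\rho \intd\mu = 0$; combined with $Q_\rho \le 0$ $\mu$-a.e., this gives $Q_\rho = 0$ $\mu$-a.e., first for each $\rho$ and then, via a countable dense set of radii together with continuity in $\rho$, for all $\rho$ simultaneously at $\mu$-a.e.\ $t_0$. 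Fixing such a $t_0$ and substituting $\rho^2 \mapsto \rho$, $s^2 \mapsto s$, the vanishing of $Q_\rho(t_0)$ for all $\rho$ says that a Riemann--Liouville fractional integral of order $(d+1)/2$ of the function $s \mapsto \bigl(\mu((t_0, t_0 + \sqrt s]) - \mu([t_0 - \sqrt s, t_0))\bigr)/(2\sqrt s)$ vanishes identically; since fractional integration is injective on $L^1_\loc$ and $\mu$ is non-atomic, $\mu((t_0, t_0 + s]) = \mu([t_0 - s, t_0))$ for every $s > 0$, that is, the reflection through $t_0$ preserves $\mu$. Hence $\mu$ is preserved by the reflection through $\mu$-a.e.\ point of $\R$. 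Being non-atomic and nonzero, $\mu$ then has infinitely many such symmetry points, which accumulate somewhere; the composition of the reflections through two distinct symmetry points is a translation preserving $\mu$, so $\mu$ is invariant under arbitrarily small translations and hence under the closed subgroup of $(\R,+)$ they generate, namely $\R$ itself. Therefore $\mu = c\,\lmio$ for some $c \ge 0$, so $\gamma$ is affine — contradicting that $g$ is a non-concave blow-up. Thus $|\cantor f|(E) = 0$, which is the claim.

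The step I expect to be the main obstacle is the rigidity argument above: $Q_\rho \le 0$ is only a one-sided, scale-averaged constraint at each point, and upgrading it to the exact reflection symmetry of $\mu$ hinges on the device of testing against $\mu$ itself — legitimate precisely because $\Psi_\rho$ is odd — followed by the injectivity of the Abel transform. The other technical point requiring care is the blow-up/density transfer of $\Ml R f = f^*$ to the limit $g$, where one must control precise representatives of $f$ and account for the disappearance of the constraint $r < R$ under rescaling.
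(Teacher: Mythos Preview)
Your strategy is genuinely different from the paper's. The paper first shows, via superharmonicity, that a non-concave $\gamma$ forces $\M_{C(0,1)}w(y')>w^*(y')$ at \emph{some} point $y'$, and then spends the bulk of the work (its \zcref{lem:positivedensity}) transferring this \emph{strict} inequality back to the rescalings $f_{r_i}$, producing a set of positive $|Df_{r_i}|$-measure where $\M f_{r_i}>f_{r_i}^*$. You instead try to push the \emph{non-strict} constraint $\Ml R f=f^*$ forward to the blow-up, and then run an odd-kernel/Abel-transform rigidity argument to force $\gamma$ affine. The rigidity device (testing $Q_\rho\le 0$ against $\mu$ itself and using that $\Psi_\rho$ is odd) is elegant and, on its own terms, correct.

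The gap is precisely the step you call ``standard (though \ldots\ slightly delicate)'': it is neither. To pass from $\vint_{B(y_j,\rho)}f_{r_j}\le f_{r_j}^*(y_j)$ for $y_j\in r_j^{-1}(E-x_0)$ to $\vint_{B(y,\rho)}g\le g^*(y)$ for $|Dg|$-a.e.\ $y$, you need $\limsup_j f_{r_j}^*(y_j)\le g^*(y)$, but $L^1$-convergence gives no such control on precise representatives; indeed on $E_j$ one has $f_{r_j}^*(y_j)=\sup_s\vint_{B(y_j,s)}f_{r_j}$, and suprema are only \emph{lower} semicontinuous under such limits, so the natural estimate goes the wrong way. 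This is why the paper transfers a strict inequality in the opposite direction, where a $\delta$ of room survives $L^1$-perturbation, and still needs Egorov, quasi-semicontinuity of $f^\vee$, and the monotonicity of $\gamma$ to make it work. Two further points: first, the assertion that $\mu=D\gamma$ is non-atomic at a $|\cantor f|$-point is not justified --- $|\jump{f_{r_j}}|\to 0$ does not prevent the $L^1$-limit from acquiring a jump --- and without it your rigidity can fail (for $\mu=\delta_0$ the constraint $Q_\rho\le 0$ holds $\mu$-a.e.\ but you obtain only a single symmetry point and no translations, while $\gamma$ is non-concave). Second, the blow-up converges only on $C(0,1)$, so the constraint is available only for $\rho<\tfrac12-|t_0|$; the oddness identity $\int Q_\rho\,d\mu=0$ then mixes the region where $Q_\rho\le 0$ is known with a boundary strip where it is not, and the ``routine localisation'' does not obviously eliminate this.
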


Here $\Ml R f$ denotes the maximal function where the radii in the supremum are limited to $0<r< R$ and $f^*$ the precise representative of $f$, see \zcref{eq:preciserepresentative}.
We always have $\M f\ge \Ml R f\ge f^*$.
The expression \(\{\Ml R f=f^*\}\) is a shorthand for the set of all $x\in\R^d$ with $\Ml Rf(x)=f^*(x)$.
We prove \zcref{thm:main} in \zcref{sec:main proof}.

Then in \zcref{sec:sobolev} we consider Sobolev
$W^{1,1}_{\loc}$-regularity of the maximal function.
We let $\Om$ be an open subset of $\R^d$, and then we understand $\M f$ to be the maximal function where the radii $r$ are limited
by the requirement $B(x,r)\subset \Om$.

\begin{theorem}\label{thm:main two}
	Suppose that \(f\in L^1_\loc(\Omega)\) with \(\var_\Omega f<\infty\)
	and suppose that $\M  f\in \BV_{\loc}(\Om)$.
	Then $|\jump{\M f}|\le \tfrac 12 |\jump f|$. 
	If in addition $f$ has a non-concave blow-up at $|\cantor f|$-a.e.\ point
	then $\cantor{\M f}=0$.
	If in addition $|\jump f|(\Om)=0$, then $\M f$ is
	ACL (absolutely continuous on lines) and in the class $W_{\loc}^{1,1}(\Om)$. 
\end{theorem}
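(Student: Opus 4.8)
The plan is to derive \zcref{thm:main two} from \zcref{thm:main} together with standard facts about the fine structure of BV functions, treating the three assertions in turn and combining them at the end.

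First I would establish the jump bound $|\jump{\M f}|\le\tfrac12|\jump f|$. Since $\M f\in\BV_\loc(\Om)$, its jump set $J_{\M f}$ is $\smo$-rectifiable, and at $\smo$-a.e.\ point $x\in J_{\M f}$ the function $\M f$ has one-sided traces $(\M f)^\vee(x)>(\M f)^\wedge(x)$. The key observation is that $\M f\ge f^*$ pointwise and that $\M f$ is lower semicontinuous (it is a supremum of continuous averaging operators), which pins the lower approximate limit: at a jump point of $\M f$ one has $(\M f)^\wedge(x)\ge f^\vee(x)$, because approaching $x$ from the ``small side'' of the jump the averages $\vint_{B(x,r)}f$ already see the larger trace $f^\vee(x)$ on a set of density bounded below. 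Hence $|D^j\M f|(\{x\}$-direction$)=(\M f)^\vee(x)-(\M f)^\wedge(x)\le (\M f)^\vee(x)-f^\vee(x)$, and a symmetric continuity argument (any point not in $J_f$ is a Lebesgue point of $f$, around which the averages converge, forcing $\M f$ to be continuous there unless the jump is produced by $f$ itself) shows $J_{\M f}\subset J_f$ up to an $\smo$-null set and $(\M f)^\vee\le f^\vee$, $(\M f)^\wedge\ge f^\wedge$ there. Integrating the pointwise inequality $(\M f)^\vee-(\M f)^\wedge\le\tfrac12(f^\vee-f^\wedge)$ — the factor $\tfrac12$ coming from the centered average at a jump point giving at most the midpoint $\tfrac12(f^\vee+f^\wedge)$ as a lower bound, so $(\M f)^\wedge\ge\tfrac12(f^\vee+f^\wedge)$ — against $\smo\mres J_f$ yields $|\jump{\M f}|\le\tfrac12|\jump f|$.

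Next, the Cantor part. By the decomposition $D\M f=\ac{\M f}+\cantor{\M f}+\jump{\M f}$ and the standard fact that $|\cantor{\M f}|$ vanishes on $\smo$-$\sigma$-finite sets and on the set where $\M f$ is approximately differentiable, it suffices to show that $|\cantor{\M f}|$-a.e.\ point $x$ lies in a set on which we can get a contradiction. The heuristic is: at a point $x$ carrying Cantor mass of $\M f$, $\M f$ has an infinitesimal oscillation not coming from a jump, so the maximizing radii shrink to $0$ and $\M f(x)$ is ``almost'' achieved in the limit, i.e.\ $\Ml Rf(x)=f^*(x)$ for the relevant localized operator. But $|\cantor{\M f}|\ll|\cantor f|$ must hold once $\jump{\M f}$ and $\ac{\M f}$ are split off — this needs an argument that $\M f$ cannot create new Cantor mass on the $|\ac f|$-part (there $f$ has approximately affine, hence concave-and-convex, blow-ups and $\M f$ is locally Lipschitz-like) — so $|\cantor{\M f}|$-a.e.\ point is a $|\cantor f|$-point, and by hypothesis $f$ has a non-concave blow-up there, and it also lies in $\{\Ml Rf=f^*\}$ for some/all $R$; this contradicts \zcref{thm:main}. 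Hence $\cantor{\M f}=0$.

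Finally, if $|\jump f|(\Om)=0$ then the first part gives $\jump{\M f}=0$, and the second (whose non-concavity hypothesis is now in force) gives $\cantor{\M f}=0$, so $D\M f=\ac{\M f}$ is absolutely continuous with respect to Lebesgue measure. A BV function whose distributional gradient is an $L^1$ density is in $W^{1,1}_\loc$, and by Fubini/the one-dimensional characterization of BV it is ACL. The main obstacle I expect is the middle step: precisely controlling $|\cantor{\M f}|$ and showing that at $|\cantor{\M f}|$-a.e.\ point the localized identity $\Ml Rf=f^*$ holds, so that \zcref{thm:main} can be invoked; establishing $\smo$-a.e.\ that $J_{\M f}\subset J_f$ and the two-sided trace inequalities for the jump bound is delicate but follows the now-standard lower-semicontinuity arguments, whereas ruling out spontaneously generated Cantor mass requires a genuine blow-up analysis of $\M f$ near diffuse-gradient points.
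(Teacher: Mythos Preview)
Your outline has the right overall architecture, but there are two concrete gaps.

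\textbf{Jump part.} Your inequality $(\M f)^{\wedge}(x)\ge\tfrac12(f^{\vee}(x)+f^{\wedge}(x))$ from lower semicontinuity and $\M f\ge f^*$ is exactly what the paper uses. But your companion claim $(\M f)^{\vee}(x)\le f^{\vee}(x)$ is \emph{not} true in general, and no ``symmetric continuity argument'' will give it. The paper instead shows that if $(\M f)^{\vee}(x)>f^{\vee}(x)$ then actually $(\M f)^{\wedge}(x)\ge(\M f)^{\vee}(x)$, so there is no jump at all. This is done by a covering argument: one exhibits a sequence $y_i\to x$ along which $\M f(y_i)=\Mg R f(y_i)$ for a fixed $R>0$, and then uses that $\Mg R f$ is Lipschitz (\zcref{prop:Lipschitz continuity}) to pass to the limit. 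Establishing $\smo$-a.e.\ $J_{\M f}\subset J_f$ also requires the quasicontinuity machinery of \zcref{thm:quasicontinuity of noncentered maximal function,lem_exceptional}, not just Lebesgue point reasoning.

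\textbf{Cantor part.} Here your decomposition is the wrong one. You try to split according to the Lebesgue decomposition of $Df$ and argue that $\M f$ cannot create Cantor mass on the $|\ac f|$-part; this is neither how the paper proceeds nor easy to make precise. The correct decomposition is into $H_f\coloneqq\{\M f>f^{\vee}\}$ and its complement. The key fact you are missing is that on $H_f$ (outside a set of small capacity) the optimal radii are locally bounded \emph{below}, so $\M f=\Mg R f$ there, and $\Mg R f$ is Lipschitz; hence $|\cantor{\M f}|(H_f)=0$. This is the content of \zcref{eq:maximal function is M R,prop:Lipschitz continuity}. On $\Om\setminus(H_f\cup J_f)$ one has $\M f=f^*$ identically, so on one-dimensional sections $|\cantor{[(\M f)_z]}|=|\cantor{(f_z)}|$ there (\zcref{lem:var meas}), and \zcref{thm:main} makes the right-hand side vanish under the non-concave blow-up hypothesis. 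Your heuristic ``Cantor mass of $\M f$ forces maximizing radii to shrink'' is the contrapositive of the Lipschitz step, but it does not by itself yield the absolute continuity $|\cantor{\M f}|\ll|\cantor f|$ you want; for that you still need the identity $\M f=f^*$ on the complement of $H_f$ together with \zcref{lem:var meas}.

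\textbf{ACL part.} Once $\cantor{\M f}=0$ and $\jump{\M f}=0$ you correctly get $\M f\in W^{1,1}_{\loc}(\Om)$. But this only gives ACL for \emph{some} representative; to conclude that $\M f$ itself is ACL you need that $\M f$ is continuous on almost every line, which the paper obtains from the $1$-quasicontinuity of $\M f$ (\zcref{thm:quasicontinuity of noncentered maximal function}) combined with \zcref{lem:capacity and Hausdorff measure}.
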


Here, we have to \emph{assume} $\BV_{\loc}$-regularity of the maximal function,
because it is not known except in one dimension due to \cite[Theorem 1.2]{Kur}.

By ACL we mean absolutely continuous on almost every line parallel to a coordinate
axis;
note, that $\M f$ being in $W_{\loc}^{1,1}(\Om)$ implies the ACL property for
\emph{some} pointwise representative of $\M f$, but by \zcref{thm:main two} in fact $\M f$ itself already has this property.

\begin{corollary}\label{cor:BV to Sobo}
	Let \(f\in L^1_\loc(\R)\) with \(\var_\R f<\infty\).
	Then $|\jump{\M f}|\le \tfrac 12 |\jump f|$. 
	If in addition $f$ has a non-concave blow-up at $|\cantor f|$-a.e.\ point
	then $\cantor{\M f}=0$.
	If in addition $|\jump f|(\Om)=0$, then $\M f\in W_{\loc}^{1,1}(\R)$. 
\end{corollary}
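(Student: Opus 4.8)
The plan is to deduce \zcref{cor:BV to Sobo} directly from \zcref{thm:main two} by specializing to $d=1$ and $\Omega=\R$. The only nontrivial point is that \zcref{thm:main two} carries the standing hypothesis $\M f\in\BV_\loc(\Om)$, which we are not assuming in the corollary; so the first step is to recall that in one dimension this hypothesis is automatic. Concretely, since $f\in L^1_\loc(\R)$ with $\var_\R f<\infty$ we have $f\in\BV(\R)$, and by \cite[Theorem 1.2]{Kur} the centered maximal function $\M f$ is then of bounded variation on $\R$ with $\var_\R\M f\le C\var_\R f$, in particular $\M f\in\BV_\loc(\R)$. With $\Om=\R$ this is exactly the missing hypothesis.

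Once that is in place, the three conclusions are immediate quotations of \zcref{thm:main two}. First, $|\jump{\M f}|\le\tfrac12|\jump f|$ holds with no further assumption. Second, under the extra hypothesis that $f$ has a non-concave blow-up at $|\cantor f|$-a.e.\ point, \zcref{thm:main two} gives $\cantor{\M f}=0$; combined with the jump bound and $\jump f$ possibly nonzero, this already says $D\M f$ has no Cantor part. Third, if moreover $|\jump f|(\R)=0$, then $|\jump{\M f}|(\R)=0$ as well, so $\M f$ is ACL and lies in $W^{1,1}_\loc(\R)$; in one variable ACL is just local absolute continuity, so this is the usual statement $\M f\in W^{1,1}_\loc(\R)$.

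I do not anticipate a genuine obstacle here — the corollary is essentially a packaging of \zcref{thm:main two} with Kurka's one-dimensional $\BV$-boundedness theorem supplying the hypothesis for free. The one place to be slightly careful is the logical bookkeeping in the statement: the phrase ``if in addition $|\jump f|(\Om)=0$'' should be read with $\Om=\R$, and one should note that $|\jump f|(\R)=0$ forces $|\jump{\M f}|(\R)=0$ via the first conclusion, so that the hypothesis $f$ has a non-concave blow-up at $|\cantor f|$-a.e.\ point becomes vacuous when $\cantor f=0$ but is still needed in the generality stated. Writing this out is a two-line argument, so the proof will be correspondingly short.

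\begin{proof}[Proof of \zcref{cor:BV to Sobo}]
Since $f\in L^1_\loc(\R)$ and $\var_\R f<\infty$, we have $f\in\BV(\R)$, and by \cite[Theorem 1.2]{Kur} it follows that $\M f\in\BV_\loc(\R)$. Thus, taking $\Om=\R$, the hypotheses of \zcref{thm:main two} are satisfied. The first conclusion $|\jump{\M f}|\le\tfrac12|\jump f|$ is then immediate from \zcref{thm:main two}. If in addition $f$ has a non-concave blow-up at $|\cantor f|$-a.e.\ point, then \zcref{thm:main two} gives $\cantor{\M f}=0$. Finally, if also $|\jump f|(\R)=0$, then by the first conclusion $|\jump{\M f}|(\R)=0$, so \zcref{thm:main two} yields that $\M f$ is ACL; in one dimension this means $\M f$ is locally absolutely continuous, hence $\M f\in W^{1,1}_\loc(\R)$.
\end{proof}
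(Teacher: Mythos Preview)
Your proof is correct and matches the paper's approach: the paper does not spell out a separate proof of \zcref{cor:BV to Sobo}, but the sentence immediately following \zcref{thm:main two} makes clear that the corollary is obtained by invoking \cite[Theorem 1.2]{Kur} to supply the hypothesis $\M f\in\BV_\loc(\R)$ and then applying \zcref{thm:main two} with $\Om=\R$, exactly as you do.
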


\begin{remark}
The bound $|\jump{\M f}|\le \tfrac 12 |\jump f|$ is optimal:
For example for $f=\ind{B(0,1)}$ we have \(\jump{\M f}=\tfrac 12\jump f\).
\end{remark}

In particular, \zcref{thm:main two} generalizes the main result from \cite{GoKo} to more general functions and to all dimensions:

\begin{corollary}
\label{corollary:gk24}
	Suppose  \(f\in L^1_\loc(\Omega)\) with \(\var_\Omega f<\infty\) and $|\jump f|(\Om)=0$ such that
	\begin{enumerate}
	\item
	\label{it:singular}
	at $|\cantor f|$-a.e.\ point, $f$ has a blow-up with a singular part, or
	\item
	\label{it:ahlforsregular}
	$f:\R\rightarrow\R$ and $|\cantor f|$ is of the form of \zcref{eq:gokoassumption}.
	\end{enumerate}
	Moreover, suppose that $\M  f\in \BV_{\loc}(\Om)$.
	Then $\M f$ is ACL and in the class $W_{\loc}^{1,1}(\Om)$. 
\end{corollary}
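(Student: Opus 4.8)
The plan is to reduce everything to \zcref{thm:main two}. That theorem asserts that if \(\var_\Omega f<\infty\), \(\M f\in\BV_{\loc}(\Om)\), \(|\jump f|(\Om)=0\), and \(f\) has a non-concave blow-up at \(|\cantor f|\)-a.e.\ point, then \(\M f\) is ACL and lies in \(W_{\loc}^{1,1}(\Om)\). All of these hypotheses except the last are explicitly assumed in the corollary, so it suffices to show that each of \zcref{it:singular} and \zcref{it:ahlforsregular} implies that \(f\) has a non-concave blow-up at \(|\cantor f|\)-a.e.\ point.

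For \zcref{it:singular} this is essentially immediate. Any real-valued concave function on an open subset of \(\R^d\) is locally Lipschitz, hence lies in \(W^{1,\infty}_{\loc}\), so its distributional gradient is absolutely continuous and carries no singular part. Consequently a blow-up of \(f\) whose gradient has a nonzero singular part cannot be concave. By assumption \zcref{it:singular} such a blow-up exists at \(|\cantor f|\)-a.e.\ point, so the remaining hypothesis of \zcref{thm:main two} is met and we are done in this case.

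For \zcref{it:ahlforsregular} the plan is to reduce it to \zcref{it:singular}. Here \(f:\R\to\R\) with \(Df=\cantor f=\sum_{i=1}^{N}\mu_i\), each \(\mu_i\ge0\) Ahlfors regular of dimension \(d_i\in(0,1)\). Since a \(d_i\)-regular measure with \(d_i<1\) is singular with respect to \(\lmio\), so is \(|\cantor f|=\sum_i\mu_i\); and since \(Df\ge0\), the function \(f\) is non-decreasing, so every blow-up of \(f\) is a non-decreasing function whose distributional derivative is a subsequential tangent measure of \(|\cantor f|\) at the blow-up point. Standard tangent-measure theory for Ahlfors regular measures — carried out in \cite{GoKo} for precisely this class of measures — gives that at \(|\cantor f|\)-a.e.\ point one can choose such a blow-up with a nonzero derivative, and that this derivative is again \(d_i\)-regular for some \(i\), hence singular with respect to \(\lmio\). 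Thus the blow-up has a nonzero singular part, which places us in case \zcref{it:singular} and completes the proof.

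The only genuine work is in the last paragraph: one has to verify, with the normalization in the paper's definition of blow-up (\zcref{eq:blowup}), that a blow-up of the \emph{function} \(f\) exists along a subsequence, that its derivative is an honest nonzero tangent measure of \(|\cantor f|\), and that this tangent measure is \(d_i\)-dimensional rather than degenerate. These are standard facts about Ahlfors regular measures, and they are exactly what is established for the measures in \zcref{eq:gokoassumption} in \cite{GoKo}; the cleanest route is to invoke that analysis directly rather than redo it.
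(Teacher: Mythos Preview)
Your overall strategy matches the paper's exactly: reduce both cases to the non-concave blow-up hypothesis of \zcref{thm:main two}, and reduce \zcref{it:ahlforsregular} to \zcref{it:singular} by arguing that the blow-up inherits Ahlfors regularity of some dimension $d_i<1$ and is therefore singular. For \zcref{it:singular} the paper takes a slightly different path via a dedicated \zcref{lem:singular and noncave}: it uses that a bounded increasing concave $\gamma$ cannot have a point where the difference quotient diverges (which a nonzero $\singular\gamma$ would force), whereas you use the cleaner observation that a concave function on an open interval is locally Lipschitz and hence has absolutely continuous derivative; your route is shorter and does not need the boundedness of $\gamma$. For \zcref{it:ahlforsregular} the paper is slightly more concrete than your appeal to \cite{GoKo}: it takes the $d_i$ distinct, picks the smallest $d_i$ with $x$ in the support of $\mu_i$, notes that then $Df(B(x,r))/\mu_i(B(x,r))\to1$, and concludes from \zcref{eq:blowup} that $Dw$ is itself $d_i$-regular. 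The underlying idea is the same as yours, and in fact neither argument fully spells out why $Dw$ inherits the Ahlfors regularity.
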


\zcref[S]{corollary:gk24} follows from the following \zcref*[noref,nocap]{lem:singular and noncave}.

\begin{lemma}\label{lem:singular and noncave}
	Suppose $\gamma:(-1/2,1/2)\rightarrow\R$ is bounded and increasing such that
	$\singular \gamma$ (the singular part of its derivative) is nonzero. Then $\gamma$ is not concave. 
\end{lemma}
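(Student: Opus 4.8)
The plan is to prove the contrapositive: assuming that $\gamma$ is concave (in addition to being bounded and increasing), I will show that $\singular\gamma=0$. Since $\gamma$ is increasing and bounded, $D\gamma$ is a finite nonnegative Radon measure on $(-1/2,1/2)$ and the decomposition $D\gamma=\ac\gamma+\cantor\gamma+\jump\gamma$ is available, with $\singular\gamma=\cantor\gamma+\jump\gamma$; so it suffices to rule out a nonzero singular part.

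The key point is that a finite concave function on an \emph{open} interval is locally Lipschitz. Concretely, fix a compact subinterval $[a,b]\subset(-1/2,1/2)$ and pick $-1/2<a'<a<b<b'<1/2$. Concavity makes the difference quotient $(x,y)\mapsto\frac{\gamma(y)-\gamma(x)}{y-x}$ nonincreasing in each variable, so for all $x<y$ in $[a,b]$ it lies between the two finite constants $\frac{\gamma(b')-\gamma(b)}{b'-b}$ and $\frac{\gamma(a)-\gamma(a')}{a-a'}$. Hence $\gamma$ is Lipschitz, and in particular absolutely continuous, on $[a,b]$, so $\restrictzero{D\gamma}{[a,b]}$ is absolutely continuous with respect to $\lmio$. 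Letting $[a,b]$ exhaust $(-1/2,1/2)$, we conclude $D\gamma=\ac\gamma$, i.e.\ $\singular\gamma=0$. By contraposition, if $\singular\gamma\neq0$ then $\gamma$ is not concave, as claimed.

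I do not expect a serious obstacle here: the only real content is the elementary two-sided slope bound for concave functions on a compact subinterval of their domain — which in passing also rules out jumps, so that $\jump\gamma=0$ — together with the standard fact that a locally absolutely continuous BV function has no singular part. If one prefers not to invoke ``concave $\Rightarrow$ locally Lipschitz'' as a black box, the same slope bound shows directly that on each $[a,b]$ the function $\gamma$ equals a constant plus the Lebesgue integral of its a.e.-defined, nonincreasing derivative, which again forces $\singular\gamma=0$.
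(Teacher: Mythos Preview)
Your proof is correct and takes a genuinely different route from the paper. The paper argues by direct contradiction: if $\gamma$ were concave with nonzero singular part, then at a point $x$ where the symmetric density of $D\gamma$ is infinite (which exists since $\singular\gamma\neq 0$), the concavity slope inequality forces $\gamma(s)\to -\infty$ at any fixed $s<x$, contradicting boundedness. Your argument instead invokes the standard fact that a finite concave function on an open interval is locally Lipschitz (via the two-sided secant-slope bound), hence locally absolutely continuous, so $\singular\gamma=0$ on every compact subinterval and thus on all of $(-1/2,1/2)$.

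Your approach is arguably cleaner and has the minor bonus that it never uses the boundedness hypothesis: any real-valued concave function on an open interval is locally Lipschitz, so the conclusion $\singular\gamma=0$ already follows. The paper's argument, by contrast, makes explicit use of boundedness to derive the contradiction, and is a little more hands-on in that it isolates precisely how a singular point is incompatible with concavity. Both arguments are short and elementary; yours leans on a familiar black box about concave functions, while the paper's unpacks that black box at the single relevant point.
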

\begin{proof}
	For a contradiction suppose that $\gamma$ is concave.
	Since $\singular \gamma$ is nonzero, there exists a point $x\in (-1/2,1/2)$ such that
	by \zcref{eq:fundamental theorem of calculus for BV},
	\[
		\lim_{r\to 0}\frac{\gamma(x+r)-\gamma(x-r)}{r}
		=\lim_{r\to 0}\frac{D\gamma((x-r,x+r))}{r}\\
		=\infty.
	\]
	Consider $s\in (-1/2,1/2)$ with $s<x$, and $r>0$ small enough $s<x-r$.
	Then by concavity we have
	\[
		\gamma(s)
		\le \gamma(x-r)+(x-r-s)\frac{\gamma(x+r)-\gamma(x-r)}{2r}
		\to-\infty
	\]
	as $r\searrow 0$. This contradicts the fact that $\gamma$ is bounded.
\end{proof}

\begin{proof}[Proof of \zcref{corollary:gk24}]
Assume \zcref{it:singular}.
Then \zcref{lem:singular and noncave} implies the existence of a non-concave blow-up, and thus the conclusion follows from \zcref{thm:main two}.

Assume \zcref{it:ahlforsregular}.
We can assume all the dimensions $d_1,\ldots,d_N$ to be distinct.
Then for $|\singular f|$-a.e.\ $x\in \R$, consider the smallest $d_i$ such that $x$ is in the support of $\mu_i$.
Then we have
\[
\lim_{r\to 0}\frac{Df(B(x,r))}{\mu_i(B(x,r))}=1.
\] 
Then by a standard BV result, see \zcref{eq:blowup},
there exists a blow-up of $f$ at $x$, denoted by $w\in \BV((-1/2,1/2))$, 
such that $Dw$ is necessarily also Ahlfors $d_i$-regular.
In particular, there exists a blow-up with a nonzero singular part, i.e.\ \zcref{it:singular} holds.
\end{proof}

\begin{remark}
Note that it is not necessarily the case that $f$ has a non-concave blow-up at $|\cantor f|$-a.e.\ point.
In \cite[Example~5.9(1)]{MR890162}, Preiss gives an example of a measure on $\R$ (which can always be taken to be $Df$ for a BV function $f$)
that is singular with respect to $\lmio$, but at $|Df|$-a.e.\ point, all blow-ups of $f$
are linear functions.

It remains open whether the conclusions of \zcref{thm:main,thm:main two} continue to hold for $A=\mathbb{R}^d$ and without any assumptions on $\cantor f$ respectively.
\end{remark}

\paragraph{Acknowledgments.}
The authors wish to thank Khanh Nguyen for pointing out some useful references.

\section{Preliminaries}
\label{sec:preliminaries}

In this \zcref*[noref,nocap]{sec:preliminaries} we will follow the monograph \cite{AFP}.
The results discussed here are standard, and we only give references for some key results.

\subsection{Radon measures}\label{subsec:Radon measures}

The Borel $\sigma$-algebra on a set $H\subset \R^d$ is denoted by $\borel H$.
We always work with the Euclidean norm $|\cdot|$ for vectors $v\in \R^d$ as well as for
matrices $A\in \R^{k\times d}$, $k\ge 1$.

Let $x\in\R^d$ and $0<r<\infty$.
We denote by $B(x,r)$ the open ball in $\R^d$ with center $x$ and radius $r$.
We denote the open ball in $\R^{d-1}$ with center $z\in \R^{d-1}$ and radius $r$ by $B_{d-1}(z,r)$.
We denote a cylinder centered in $x$, parallel to $e_d$ and with length and radius $r$ by
\begin{equation}
\label{eq:cylinder}
C(x,r)
\coloneqq
B_{d-1}((x_1,\ldots,x_{d-1}),r)\times (x_d-r/2,x_d+r/2)
.
\end{equation}
Given a unit vector $v\in\Sph^{d-1}$, we denote by $C_v(x,r)$ the corresponding cylinder oriented in the direction $\nu$.

Let $\Om\subset\R^d$ be an open set and $\ell \in\N$.
We  denote by $\meas{\Om;\R^{\ell}}$ the Banach space of $\ell$-vector-valued
Radon measures. For $\mu \in \meas{\Om;\R^{\ell}}$, the
total variation $|\mu|(\Om)$ is defined
with respect to the Euclidean norm on $\R^{\ell}$.
We further denote the set of positive measures by $\measp{\Om}$.

For a vector-valued Radon measure $\nu\in\meas{\Om;\R^{\ell}}$
and a positive Radon measure $\mu\in\measp{\Om}$, we
have the Lebesgue--Radon--Nikodym decomposition
\[
\nu=\gac \nu+\gsingular \nu=\frac{d\nu}{d\mu}\intd \mu+\gsingular \nu
\]
of $\nu$ with respect to $\mu$,
where $\frac{d\nu}{d\mu}\in L^1(\Om,\mu;\R^{\ell})$.

For open sets $E\subset \R^{d-1}$, $F\subset \R$, 
a \emph{parametrized measure} $(\nu_{z})_{y \in E}$ is a map from $E$ to the set
$\meas{F;\R^{\ell}}$ of vector-valued Radon measures on $F$. For $\mu\in\measp{E}$
it is said to be
\emph{weakly* $\mu$-measurable} if $z\mapsto \nu_{z}(B)$ is
$\mu$-measurable for all Borel sets $B\in\borel{F}$. 
Suppose, that we additionally have
\(
\int_{E}|\nu_{z}|(F)\intd \mu(z)<\infty.
\)
For a set
$A \subset E\times F$
we denote its fibers by
\[
A_z\coloneqq \{t\in F \colon (z,t) \in A\},\qquad z\in E
.
\]
Then we define the generalized product measure $\mu\otimes(z\mapsto\nu_z)$ by
\begin{equation}\label{eq:product measure}
	\mu\otimes(z\mapsto\nu_z)(A)\coloneqq \int_E \nu_z(A_z)\intd \mu(z)
,
\end{equation}
for all $A\subset E\times F$ that belong to the $\sigma$-algebra on $E\times F$ generated by the set of rectangles $\borelm\mu{E} \times \borel{F}$,
 where $\borelm\mu{E}$
denotes the $\mu$-completion of $\borel{E}$.

The Lebesgue measure in $\R^d$ is denoted by $\lmo$, and the Lebesgue measure in $\R^{d-1}$ is denoted
by $\Ll^{d-1}$.
The Hausdorff measure of dimension $0\le s\le d$ is denoted by $\h^s$. 
When we integrate with respect to Lebesgue measure we may omit it in the notation and write
\[
\int f
\coloneqq
\int f\intd\lmo
=
\int f(y)\intd\lm y
=
\int f(y)\intd y
.
\]

The Sobolev $1$-capacity of a set $A\subset \R^d$ is defined by
\[
\capa_1(A)\coloneqq\inf\Bigl\{
 \int_{\R^n}|u|+|\nabla u|\intd\lmo:
u\in W^{1,1}(\R^d),\ 
u\ge 1\text{ in a nbhd of }A 
\Bigr\}
.
\]
By e.g.\ \cite[Theorem 4.3, Theorem 5.1]{HaKi} we know that for any $A\subset \R^d$,
\begin{equation}\label{eq:null sets of capa and Hausdorff}
	\capa_1(A)=0\quad\textrm{if and only if}\quad\sm A=0.
\end{equation}

We have a function and a measure theoretic notion of restrictions.
Given a map $f:X\rightarrow Y$ and $A\subset X$ we define define the function theoretic restriction $\restrict fA:A\rightarrow Y$ by $\restrict fA(x)=f(x)$.
For a measure $\mu$ on $X$ we define the measure theoretic restriction $\restrictzero\mu A$ by \(\restrictzero\mu A(B)=\mu(A\cap B)\).

Note, that while the measure theoretic restriction is in some sense a zero extension to the original domain,
our function theoretic restriction is not.
This is important for example in the following \zcref*[noref,nocap]{defi:quasicontinuous}.

\begin{definition}
\label{defi:quasicontinuous}
Let $\Om\subset \R^d$ be open.
A function $v:\Om\rightarrow[-\infty,\infty]$ is \emph{$1$-quasicontinuous} if for every $\eps>0$
there exists an open set $G\subset \Om$ such that $\capa_1(G)<\eps$
and $\restrictl v{\Om\setminus G}$ is finite and continuous.
\end{definition}

\subsection{Functions of bounded variation}

We denote by $\BV(\Omega)$ the set of all functions $f\in L_\loc^1(\Omega)$ of bounded variation,
that is, whose distributional derivative
is an $\R^{d}$-valued Radon measure with finite total variation. This means that
there exists a (necessarily unique) Radon measure $Df\in \meas{\Om;\R^{d}}$
such that for all $\varphi\in C_{\mathrm c}^1(\Omega)$, the integration-by-parts formula
\[
\int_{\Omega}f\frac{\partial\varphi}{\partial y_i}\intd\lmo
=-\int_{\Omega}\varphi\intd (Df)_i,\qquad i=1,\ldots,d
\]
holds.
Similarly to \(L^1_\loc\) we denote by \(\BV_\loc(\Om)\) the set of all $f\in L^1_\loc(\Om)$ such that for each open $G$ which is compactly contained in $\Om$ we have $\restrict fG\in\BV(G)$.

If we do not know a priori that a function $f\in L^1_{\loc}(\Om)$
is a BV function, we consider
\begin{equation}\label{eq:definition of total variation}
	\var_\Omega f \coloneqq\sup\left\{\int_{\Om}f\cdot \nabla \varphi\intd\lmo,\,\varphi\in C_c^{1}(\Om),
	\,|\varphi|\le 1\right\}.
\end{equation}
If $\var_\Omega f <\infty$, then the Radon measure $Df$ exists and $\var_\Omega f=|Df|(\Om)$
by the Riesz representation theorem.

For a measurable $A\subset\R^d$ with $0<\lm A<\infty$ and $f\in L^1(A)$,  we denote the integral average by
\begin{equation}
\label{eq:integralaverage}
f_A
=
\vint_A f
=
\frac1{\lm A}
\int_A f
.
\end{equation}
The precise representative of $f$ is defined by
\begin{equation}
\label{eq:preciserepresentative}
f^*(x)\coloneqq\limsup_{r\to 0}f_{B(x,r)},\quad x\in \Om.
\end{equation}
This is easily seen to be a Borel function.
We say that $x\in\Om$ is a Lebesgue point of $f$ if
\[
\lim_{r\to 0}\,\vint_{B(x,r)}|f(y)-f^*(x)|\intd y=0
.
\]
We denote by $S_f\subset\Om$ the set where
this condition fails and call it the approximate discontinuity set.

We say that $x\in \Om$ is an approximate jump point of $f$ if there exists a $\nu\in\Sph^{d-1}$
and distinct numbers $f^+(x), f^-(x)\in\R$ such that
\begin{equation}
\label{eq_approximatejumpdefinition}
	\lim_{r\to 0}\,\vint_{\{y\in B(x,r)\colon \langle y-x,\pm \nu\rangle>0\}}|f(y)-f^\pm(x)|\intd y=0
.
\end{equation}
The set of all approximate jump points is denoted by $J_f$.

The lower and upper approximate limits of a function $f\in\BV_{\loc}(\Om)$
are defined respectively by
\[
f^{\wedge}(x)\coloneqq 
\sup\left\{t\in\R\colon \lim_{r\to 0}\frac{\lm{B(x,r)}\cap\{f<t\})}{\lm{B(x,r)}}=0\right\}
\]
and
\[
f^{\vee}(x)\coloneqq 
\inf\left\{t\in\R\colon \lim_{r\to 0}\frac{\lm{B(x,r)\cap\{f>t\}}}{\lm{B(x,r)}}=0\right\},
\]
for all $x\in\Om$.
We interpret the supremum and infimum of an empty set to be $-\infty$ and $\infty$, respectively.
Note, that
\begin{equation}\label{eq:representatives outside jump set}
f^{*}(x)=f^{\wedge}(x)=f^{\vee}(x)
\qquad\textrm{for }x\in \Om\setminus S_f,
\end{equation}
and
\begin{equation}\label{eq:representatives in jump set}
f^{\wedge}(x)=\min\{f^{-}(x),f^+(x)\}
,
\qquad
f^{\vee}(x)=\max\{f^{-}(x),f^+(x)\}
\qquad\textrm{for }
x\in J_f.
\end{equation}

We write the Radon-Nikodym decomposition of the variation measure of $f$ into
the absolutely continuous and singular parts with respect to Lebesgue measure
$\lmo$ as $Df=\ac f+\singular f$. Furthermore, we define the Cantor and jump parts of $Df$ by
\begin{align}\label{eq:Dju and Dcu}
	\cantor f
	&\coloneqq
	\restrictzerol{\singular f}{\Om\setminus S_f}
	,&
	\jump f
	&\coloneqq
	\restrictzero{\singular f}{J_f}
	.
\end{align}
We have
\begin{equation}\label{eq:Sf and Jf}
\sm{S_f\setminus J_f}=0,
\end{equation}
and $|Df|$ vanishes on
$\smo$-negligible sets, and so we have the decomposition (see \cite[Section 3.9]{AFP})
\begin{equation}\label{eq:decomposition of variation measure}
	Df=\ac f+ \cantor f+ \jump f.
\end{equation}
From the fact that  $\sm{S_f\setminus J_f}=0$, we also get
\begin{equation}\label{eq:f star and vee}
f^*(x)\le f^{\vee}(x)\quad\textrm{for }\smo\textrm{-a.e.\ }x\in \Om.
\end{equation}
For the jump part, we know
\begin{equation}\label{eq:jump part representation}
	|\jump f|=|f^{+}-f^-|\restrictzero\smo{J_f}.
\end{equation}

\subsection{One-dimensional sections of $\BV$ functions}\label{subsec:one dimensional sections}

The following notation and results on one-dimensional sections of
$\BV$ functions are given in \cite[Section 3.11]{AFP}.

Let $d=1$. Suppose $f\in\BV_{\loc}(\R)$.
We have $J_f=S_f$, $J_f$ is at most countable,
and $Df (\{x\})=0$ for every $x\in\R\setminus J_f$.
For every $x,y\in \R\setminus J_f$ with $x<y$ we have
\begin{equation}\label{eq:fundamental theorem of calculus for BV}
	f^*(y)-f^*(x)=Df((x,y)).
\end{equation}
For every $x,y\in \R$ with $x<y$ we have
\begin{equation}\label{eq:fundamental theorem of calculus for BV 2}
	|f^*(x)-f^*(y)|\le |Df|([x,y]),
\end{equation}
and the same with $f^*$ replaced by $f^{\wedge}$ or $f^{\vee}$, or any pairing of these.
We say that
\[
f^{\vee}(x)-f^{\wedge}(x)=|Df|(\{x\})
\]
is the
\emph{jump size} of $f$ at point $x$.

We denote by $\pi:\R^d\rightarrow\R^{d-1}$ the projection given by
\[
\pi(x)\coloneqq(x_1,\ldots,,x_{d-1})
.
\]
We denote by $e_1,\ldots,e_d$ the standard basis vectors in $\R^d$.
For a set $A \subset \R^d$ we denote its fibers by
\[
A_z\coloneqq \{t\in\R \colon (z,t) \in A\},\qquad z\in \pi(A).
\]
For a map $f$ defined on $\Om$ we denote
\[
f_z(t)\coloneqq f(z,t),\quad t\in \Om_z,\ z\in \pi(\Om).
\]
For $f\in\BV(\Om)$ we we have $f_z\in \BV(\Om_z)$ for $\lmlo$-almost every $z\in\pi(\Om)$, see \cite[Theorem 3.103]{AFP}. 

Recall \zcref{eq:product measure}.
Denoting $D_d f\coloneqq \langle Df,e_d\rangle$, we further have
\begin{align*}
D_d f&=\lmlo\otimes(z\mapsto D(f_z))
,&
\sjump df&=\lmlo\otimes(z\mapsto \jump{(f_z)}),
\end{align*}
see \cite[Theorem 3.107 \& Theorem 3.108]{AFP}.
It follows that
\begin{align}\label{eq:slice representation for total variation}
	|D_d f|&=\lmlo\otimes(z\mapsto |D(f_z)|)
	,&
	|\sjump df|&=\lmlo\otimes(z\mapsto |\jump{(f_z)}|),
\end{align}
see \cite[Corollary 2.29]{AFP}, and similarly
\begin{equation}\label{eq:sections and jump sets 3}
|D^c_d f|=\lmlo\otimes(z\mapsto |\cantor{(f_z)}|).
\end{equation}
Moreover, for $\lmlo$-almost every 
$z\in\pi(\Om)$ we have
\begin{align}\label{eq:sections and jump sets}
	J_{f_{z}}&=(J_f)_{z}
	,&
	(f^*)_{z}(t)&=(f_{z})^*(t)\ \ \textrm{for every }t\in \R\setminus J_{f_{z}},
\end{align}
see \cite[Theorem 3.108]{AFP}.
By \cite[Theorem 3.108]{AFP} we also know that for $\lmlo$-almost every 
$z\in\pi(\Om)$, we have
\begin{equation}\label{eq:sections and jump sets 2}
\{(f_z)^{-}(t),(f_z)^{+}(t)\}=\{(f^{-})_z(t),(f^{+})_z(t)\}\quad\textrm{for every }t\in (J_f)_z.
\end{equation}
Combining \zcref{eq:sections and jump sets,eq:sections and jump sets 2} with
\zcref{eq:representatives outside jump set,eq:representatives in jump set},
for $\lmlo$-almost every $z\in\pi(\Om)$ and every $t\in \Om_z$ we have
\begin{align}\label{eq:upper and lower repr sections}
	(f_z)^{\wedge}(t)&=(f^{\wedge})_z(t)
	,&
	(f_z)^{\vee}(t)&=(f^{\vee})_z(t)
	,&
	(f_z)^{*}(t)&=(f^{*})_z(t)
	.
\end{align}

\subsection{Blow-ups and nonconcavity}

Let $\Om\subset \R^d$ be an open set and suppose $f\in L^1_{\loc}(\Om)$. 
Recall the definition \zcref{eq:integralaverage} of the integral average $f_{B(x,r)}$ and for $x\in\R^d$ and $A\subset\R^d$ define
\[
\dist(x,A)
\coloneqq
\inf_{y\in A}
|x-y|
.
\]
We define the local maximal function $\M_\Om f$ by
\begin{equation}\label{eq:HL def}
	\M_\Om f(x)\coloneqq
	\sup_{0<r<\dist(x,\R^d\setminus\Om)}
	f_{B(x,r)}
	\qquad x\in \Om.
\end{equation}
We usually drop $\Om$ from the notation, the exception being
when instead of the (fixed) open set $\Om$ we consider the maximal function in a different set, e.g.\ a cylinder $C(x,r)$.
Note that in most of the literature one takes an absolute value of $f$ in the integral;
with our definition, this corresponds essentially to the special case of nonnegative functions.
For $R>0$ we define the auxiliary maximal functions
\begin{align}
\notag
\Ml R f(x)
&\coloneqq
\sup_{0<r<\min\{R,\dist(x,\R^d\setminus\Om)\}}
f_{B(x,r)}
,\\
\label{eq:auxiliary maximal operator}
\Mg R f(x)
&\coloneqq
\sup_{R\leq r<\dist(x,\R^d\setminus\Om)}
f_{B(x,r)}
,
\end{align}
so that $\M f(x)=\max\{\Ml Rf(x),\Mg Rf(x)\}$.
Note, that $\Mg Rf(x)$ is only defined for those $x\in\Om$ with $B(x,R)\subset\Om$.

\begin{definition}\label{def:superharmonic}
	Let $\Om\subset \R^d$ be open and let $f\in L^1_{\loc}(\Om)$.
	We say that $f$ is $2$-superharmonic if it is lower semicontinuous
	and
	\[
	f(x)\ge \M_{\Om}f(x)\qquad \textrm{for all }x\in \Om.
	\]
\end{definition}

\begin{definition}
	Let $\Om\subset \R^d$ be open and let $f\in L^{1}_{\loc}(\Om)$.
	We say that $f$ is a $2$-supersolution if
	\[
	\int_{\Om} f \Delta \varphi\le 0
	\qquad
	\text{for all }\varphi\in C_c^{2}(\Om).
	\]
\end{definition}

For the following \zcref*[noref,nocap]{thm:superharmonic and supersol} see e.g.\ Theorems 2.59 and 2.65 in the monograph \cite{MR1461542}.

\begin{theorem}\label{thm:superharmonic and supersol}
	Let $\Om\subset \R^d$ be open and let $f\in L^{1}_{\loc}(\Om)$.
	If $f$ is $2$-superharmonic, then it is a $2$-supersolution.
	Conversely, if $f$ is a $2$-supersolution, then there exists a function $\widetilde{f}=f$ a.e.\
	which is  $2$-superharmonic.
\end{theorem}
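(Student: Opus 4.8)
The plan is to prove this classical equivalence by mollification. Fix a radial, radially nonincreasing mollifier $\rho_\eps\in C_c^\infty(B(0,\eps))$ with $\int\rho_\eps=1$, write $\Om_\eps\coloneqq\{x\in\Om:\dist(x,\R^d\setminus\Om)>\eps\}$, and put $f_\eps\coloneqq f*\rho_\eps\in C^\infty(\Om_\eps)$. For the forward implication, assume $f$ is $2$-superharmonic, so that it is lower semicontinuous and $f(y)\ge\vint_{B(y,s)}f$ whenever $\overline{B(y,s)}\subset\Om$. Using the layer-cake identity $\rho_\eps=\int_0^\infty\ind{\{\rho_\eps>t\}}\intd t$ and the fact that each superlevel set $\{\rho_\eps>t\}$ is a ball centered at the origin, I would rewrite $f_\eps(x)$ as an integral average of the ball averages $s\mapsto\vint_{B(x,s)}f$; this immediately gives $f_\eps\le f$ on $\Om_\eps$, and the same computation, now applied with $\vint_{B(\cdot,s)}f\le f$ and $\rho_\eps\ge 0$, shows that $f_\eps$ itself satisfies $\vint_{B(x,s)}f_\eps\le f_\eps(x)$ for $s$ small. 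Since $f_\eps$ is smooth, a second-order Taylor expansion of $s\mapsto\vint_{B(x,s)}f_\eps$ shows that $\vint_{B(x,s)}f_\eps-f_\eps(x)$ equals a positive multiple of $s^2\,\Delta f_\eps(x)$ plus $\ord(s^2)$, whence $\Delta f_\eps\le 0$ pointwise on $\Om_\eps$. For a nonnegative $\varphi\in C_c^2(\Om)$ and all sufficiently small $\eps$, integration by parts gives $\int f_\eps\,\Delta\varphi=\int\varphi\,\Delta f_\eps\le 0$, and letting $\eps\to 0$ (with $f_\eps\to f$ in $L^1$ on the support of $\varphi$) yields $\int f\,\Delta\varphi\le 0$, i.e.\ $f$ is a $2$-supersolution.

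For the converse, assume $f\in L^1_\loc(\Om)$ is a $2$-supersolution. Since $\rho_\eps\ge 0$, mollification preserves the sign of the distributional Laplacian, so $\Delta f_\eps=(\Delta f)*\rho_\eps\le 0$ pointwise on $\Om_\eps$; thus each $f_\eps$ is a smooth classical superharmonic function there. By the divergence theorem $\tfrac{\intd}{\intd s}\vint_{\partial B(x,s)}f_\eps$ equals a positive multiple of $\int_{B(x,s)}\Delta f_\eps$, hence is $\le 0$, so $s\mapsto\vint_{\partial B(x,s)}f_\eps$ and therefore also $s\mapsto\vint_{B(x,s)}f_\eps$ is nonincreasing on its interval of definition. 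Since $f_\eps\to f$ in $L^1_\loc(\Om)$, for each fixed $x\in\Om$ and each $s<\dist(x,\R^d\setminus\Om)$ we have $\vint_{B(x,s)}f_\eps\to\vint_{B(x,s)}f$, so $s\mapsto\vint_{B(x,s)}f$ is itself nonincreasing on $(0,\dist(x,\R^d\setminus\Om))$. I would then set
\[
\widetilde f(x)\coloneqq\lim_{s\to 0}\vint_{B(x,s)}f=\sup_{0<s<\dist(x,\R^d\setminus\Om)}\vint_{B(x,s)}f\in(-\infty,\infty].
\]
By the Lebesgue differentiation theorem $\widetilde f=f$ a.e., so $\vint_{B(x,s)}\widetilde f=\vint_{B(x,s)}f\le\widetilde f(x)$, i.e.\ $\widetilde f$ has the ball sub-mean value property; and because $y\mapsto\vint_{B(y,s)}f$ is continuous, $\widetilde f(x)=\sup_s\vint_{B(x,s)}f\le\liminf_{y\to x}\widetilde f(y)$, so $\widetilde f$ is lower semicontinuous. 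Hence $\widetilde f$ is $2$-superharmonic and equals $f$ a.e.

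The real content — and the step I expect to need the most care — is the interface between the pointwise mean-value hypothesis (which concerns the lower semicontinuous representative) and the distributional one: one must verify that the mollifications $f_\eps$ inherit superharmonicity in the correct pointwise/classical sense and that the mean-value inequalities survive the limit $\eps\to 0$. The two devices that carry this are the layer-cake identity exhibiting $f*\rho_\eps$ as an average of ball averages in the forward direction, and the monotonicity of $s\mapsto\vint_{B(x,s)}f$ extracted from $\Delta f_\eps\le 0$ in the converse; the remaining estimates (mollifier convergence, Taylor expansion, Lebesgue differentiation) are routine. This is the classical theory of superharmonic functions, for which the paper simply cites \cite{MR1461542}.
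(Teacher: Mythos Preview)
Your argument is correct and is the standard mollification proof of this classical equivalence. The paper does not give a proof of this theorem at all: it simply cites Theorems~2.59 and~2.65 of the monograph \cite{MR1461542}, as you yourself note in your final sentence. So there is nothing to compare; you have supplied what the paper outsources.

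A few minor remarks. In the forward direction your reasoning is sound, though the step showing that $f_\eps$ inherits the sub-mean-value property is really just Fubini (swap the ball average with the convolution), not the layer-cake identity again; the layer-cake device is what gives $f_\eps\le f$. In the converse direction your definition $\widetilde f(x)=\sup_{s}\vint_{B(x,s)}f$ may take the value $+\infty$, but since $\widetilde f=f$ a.e.\ and $f\in L^1_{\loc}$ this happens only on a Lebesgue-null set, so $\widetilde f$ is not identically $+\infty$ on any component and the definition in the paper is satisfied. Your identification of the delicate point---passing between the pointwise mean-value inequality and the distributional inequality via mollification, and surviving the limit $\eps\to 0$---is exactly right, and your handling of it is the textbook one.
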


We have the following quasi-semicontinuity result from \cite[Theorem 2.5]{CDLP}.
Alternatively, see
\cite[Theorem 1.1]{LaSh} and \cite[Corollary 4.2]{L-SA}
for a proof of \zcref{thm:quasisemicontinuity} in more general metric spaces.

\begin{theorem}\label{thm:quasisemicontinuity}
	Let $\Om\subset \R^d$ be open, let $f\in\BV_{\loc}(\Om)$ and let $\eps>0$.
	Then there exists an open set $G\subset\Om$
	such that $\capa_1(G)<\eps$ and 
	the map $\restrict{f^{\wedge}}{\Om\setminus G}$
	is finite and lower semicontinuous,
	and $\restrict{f^{\vee}}{\Om\setminus G}$ is finite and upper
	semicontinuous.
\end{theorem}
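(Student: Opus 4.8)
The plan is to reduce, by truncation and the coarea formula, to a statement about characteristic functions of sets of finite perimeter, and then to assemble the exceptional set $G$ as a countable union of small-capacity pieces.

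It suffices to prove that for every $h\in\BV_\loc(\Om)$ and every $\eta>0$ there is an open $G$ with $\capa_1(G)<\eta$ and $\restrict{h^\vee}{\Om\setminus G}$ upper semicontinuous; applying this to $h=f$ and to $h=-f$ (each with $\eta=\eps/2$), using $(-f)^\vee=-f^\wedge$, and taking the union of the two sets then yields the full statement. Next, writing $\Om=\bigcup_k\Om_k$ with $\Om_k$ open and $\overline{\Om_k}$ a compact subset of $\Om_{k+1}$, I would truncate $h$ on each $\Om_k$ to $\max\{-N,\min\{h,N\}\}$ for large $N$: the part removed there lives on $\{|h^\vee|>N\}$, and $\capa_1(\{|h^\vee|>N\}\cap\Om_k)\le C_k/N$ for a constant $C_k$ depending on $h$ and $\Om_{k+1}$, by the capacitary weak-type (boxing) inequality applied to a cut-off of $h$. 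Choosing $N=N_k$ to make this below $\eta 2^{-k-1}$ and putting $\{|h^\vee|>N_k\}\cap\Om_k$ into $G$, we may assume $h$ is bounded, say $|h|\le N$.

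For bounded $h$ I would combine the layer-cake identity
\[
h^\vee(x)=\int_0^{N}\ind{\{h^\vee(x)>t\}}\intd t-\int_{-N}^{0}\ind{\{h^\vee(x)\le t\}}\intd t
\]
with the coarea formula. Since $t\mapsto\ind{\{h^\vee(x)>t\}}$ is monotone, the dyadic Riemann sums of these two integrals converge to $h^\vee(x)$ \emph{uniformly in $x$}, with error at most the mesh size. On the other hand, the coarea formula shows that for a.e.\ $t$ the superlevel set $E_t=\{h>t\}$ has finite perimeter in every $\Om_k$, with $\int_{-N}^{N}P(E_t,\Om_k)\intd t=|Dh|(\Om_k)<\infty$, and that for a.e.\ such $t$ one has $\{h^\vee>t\}=\{(\ind{E_t})^\vee=1\}$ up to an $\smo$-null set --- and the latter set is, again up to an $\smo$-null set, the set of Lebesgue density-one points of $E_t$ together with $\rb{E_t}$. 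Hence $\restrict{h^\vee}{\Om_k}$ is a uniform limit of finite linear combinations of functions $(\ind{E_t})^\vee$ at dyadic levels $t$; since a uniform limit of upper semicontinuous functions is upper semicontinuous, it is enough to treat each level separately.

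This reduces the whole statement to the core local claim: for a set $E$ of finite perimeter in $\Om_k$ and any $\delta>0$, there is an open $G$ with $\capa_1(G)<\delta$ such that $\restrict{(\ind E)^\vee}{\Om_k\setminus G}$ is upper semicontinuous. I expect this to be the genuine obstacle; it is here that one has to use the fine structure theory of sets of finite perimeter --- De Giorgi's and Federer's structure theorems and the rectifiability of the essential boundary, the relative isoperimetric inequality, and capacitary estimates relating $\capa_1$ to $\smo$ --- to obtain, after deleting a small-capacity open set, a clean separation of the density-one part of $E$ from its density-zero part, which is what forces semicontinuity. Granting this, one picks the $\delta$'s for the countably many dyadic levels so that their sum, together with the truncation contributions from the second step, stays below $\eta$, lets $G$ be their union (enlarged to an open set of $\capa_1$ still below $\eta$), and uses \zcref{eq:null sets of capa and Hausdorff} to pass freely between $\smo$-null and $\capa_1$-null sets; this completes the construction.
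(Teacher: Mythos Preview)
The paper does not prove this theorem at all: it is quoted from the literature, with the sentence ``We have the following quasi-semicontinuity result from \cite[Theorem 2.5]{CDLP}. Alternatively, see \cite[Theorem 1.1]{LaSh} and \cite[Corollary 4.2]{L-SA}\ldots'' So there is no ``paper's own proof'' to compare against; the result is used as a black box throughout.

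That said, your outline is in the right spirit and is close to how the cited references argue: reduce by symmetry to $f^\vee$, localize and truncate using a capacitary weak-type (boxing) inequality to get boundedness, then use the coarea formula to reduce to characteristic functions of sets of finite perimeter, and assemble the exceptional set as a countable union with geometrically small capacities. The uniform approximation of $h^\vee$ by dyadic Riemann sums of $(\ind{E_t})^\vee$ is correct once one knows that $\{h^\vee>t\}$ agrees $\smo$-a.e.\ with the measure-theoretic closure of $E_t$, and countably many $\smo$-null corrections can be absorbed into $G$ via \zcref{eq:null sets of capa and Hausdorff}.

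The honest gap in your write-up is exactly the one you flag yourself: the ``core local claim'' that for a single set $E$ of finite perimeter one can, after removing an open set of arbitrarily small $\capa_1$, make $(\ind E)^\vee$ upper semicontinuous. You say ``Granting this'' and move on, but this is the entire content of the theorem --- the rest is bookkeeping. The standard way to close it is a covering argument on the essential boundary: for each $x$ with $(\ind E)^\vee(x)=0$ (i.e.\ density-zero points) one needs a neighborhood, outside a small-capacity set, in which the upper density of $E$ stays small; the bad set where this fails is controlled in $\smo$-measure by the relative isoperimetric inequality and then in $\capa_1$ by the comparability of $\capa_1$ with Hausdorff content. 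Without that argument actually carried out, the proposal is a correct road map but not a proof.
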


\begin{lemma}\label{lem:capacity and Hausdorff measure}
	Let $A\subset \R^d$. Then
	\[
	2\sm{\pi(A)}\le \capa_1(A).
	\]
\end{lemma}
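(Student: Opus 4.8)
The plan is to estimate $\capa_1(A)$ directly against a single admissible competitor and then optimize. So fix $u\in W^{1,1}(\R^d)$ together with an open set $U\supseteq A$ such that $u\geq 1$ $\lmo$-a.e.\ on $U$ (if no such $u$ exists, or $\capa_1(A)=\infty$, there is nothing to prove). I will show
\[
2\sm{\pi(A)}\leq\int_{\R^d}\bigl(|u|+|\nabla u|\bigr)\intd\lmo,
\]
so that taking the infimum over all admissible $u$ gives the lemma. The idea is one-dimensional slicing in the direction $e_d$: by the usual ACL characterisation of Sobolev functions, for $\lmlo$-a.e.\ $z\in\R^{d-1}$ the slice $u_z(t)\coloneqq u(z,t)$ belongs to $W^{1,1}(\R)$; in its absolutely continuous representative it is continuous and, since $u_z,u_z'\in L^1(\R)$, satisfies $\lim_{t\to\pm\infty}u_z(t)=0$; and $\int_\R|u_z'(t)|\intd t=\int_\R|\langle\nabla u(z,t),e_d\rangle|\intd t$, with $\int_{\R^{d-1}}\int_\R|\langle\nabla u(z,t),e_d\rangle|\intd t\intd\lmlo=\int_{\R^d}|\langle\nabla u,e_d\rangle|\intd\lmo\leq\int_{\R^d}|\nabla u|\intd\lmo$ by Fubini.

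Next I would isolate the exceptional set. Let $N\subset\R^{d-1}$ be an $\lmlo$-null set outside of which the slicing statement above holds and, in addition---applying Fubini to the $\lmo$-null set $\{u<1\}\cap U$---one has $u_z\geq 1$ $\lmio$-a.e.\ on the fibre $U_z\coloneqq\{t\in\R\colon(z,t)\in U\}$. Fix now any $z\in\pi(A)\setminus N$. Since $A\subseteq U$ we have $z\in\pi(U)$, so $U_z$ is open, nonempty, and hence of positive $\lmio$-measure; choosing $t_0\in U_z$ with $u_z(t_0)\geq 1$ and using the vanishing of $u_z$ at $\pm\infty$,
\[
\int_\R|u_z'(t)|\intd t\geq\Bigl|u_z(t_0)-\lim_{t\to-\infty}u_z(t)\Bigr|+\Bigl|\lim_{t\to+\infty}u_z(t)-u_z(t_0)\Bigr|=2u_z(t_0)\geq 2.
\]

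To conclude, set $g(z)\coloneqq\int_\R|\langle\nabla u(z,t),e_d\rangle|\intd t$, an $\lmlo$-measurable and $\lmlo$-integrable function on $\R^{d-1}$. The previous step gives $g\geq 2$ on $\pi(A)\setminus N$, i.e.\ $\pi(A)\setminus N\subseteq\{g\geq 2\}$, and the latter set is $\lmlo$-measurable. Since $N$ is $\lmlo$-null and $\smo$ agrees with $\lmlo$ on $\R^{d-1}$, the set $N$ is also $\smo$-negligible; hence, by subadditivity and monotonicity of $\smo$ and Chebyshev's inequality,
\[
\sm{\pi(A)}=\sm{\pi(A)\setminus N}\leq\sm{\{g\geq 2\}}=\lml{\{g\geq 2\}}\leq\tfrac12\int_{\R^{d-1}}g\intd\lmlo=\tfrac12\int_{\R^d}|\langle\nabla u,e_d\rangle|\intd\lmo\leq\tfrac12\int_{\R^d}|\nabla u|\intd\lmo.
\]
In particular $2\sm{\pi(A)}\leq\int_{\R^d}|\nabla u|\intd\lmo\leq\int_{\R^d}(|u|+|\nabla u|)\intd\lmo$, which is the displayed inequality, and the lemma follows on taking the infimum over admissible $u$. (When $d=1$ the projection maps $\R$ onto a single point and $\smo=\h^0$ is counting measure; the same argument applies with the evident conventions, or one simply observes that every nonempty $A\subset\R$ has $\capa_1(A)\geq 2$ by the one-variable estimate above.)

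I expect only two genuinely delicate points. First, $\pi(A)$ need not be $\lmlo$-measurable, which is why I route the estimate through the \emph{measurable} superlevel set $\{g\geq 2\}$ and use monotonicity of the outer measure $\smo$ together with Chebyshev, rather than integrating over $\pi(A)$ itself. Second, the slicewise bound $\int_\R|u_z'(t)|\intd t\geq 2$ genuinely uses the decay of a $W^{1,1}(\R)$ function at $\pm\infty$; this must be invoked for $\lmlo$-a.e.\ slice and is exactly what produces the factor $2$.
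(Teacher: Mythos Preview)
Your proof is correct and follows essentially the same approach as the paper: slice in the $e_d$-direction, use that each slice $u_z\in W^{1,1}(\R)$ rises to $1$ and decays to $0$ at $\pm\infty$ to get $\int_\R|u_z'|\ge 2$, then integrate over $z$ and take the infimum over admissible $u$. Your version is more careful than the paper's on one point: you route the final estimate through the measurable superlevel set $\{g\ge 2\}$ and Chebyshev's inequality to avoid integrating over the possibly non-measurable set $\pi(A)$, whereas the paper simply writes ``Integrating over all $z\in\R^{d-1}$'' without commenting on this.
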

\begin{proof}
	Consider $u\in W^{1,1}(\R^d)$ with $u\ge 1$ in a neighborhood of $A$.
	Then for $\lmlo$-a.e.\ $z\in \pi(A)$, we have
	\[
	\int_{A_z}\left|\frac{\partial u}{\partial x_d}\right|\ge 2.
	\]
	Integrating over all $z\in\R^{d-1}$, we get
	\[
	\int_{\R^d}|\nabla u|\ge 2\sm{\pi(A)}.
	\]
	Thus $\Vert u\Vert_{W^{1,1}(\R^d)}\ge 2\sm{\pi(A)}$ and
	we get the result by taking infimum over all such $u$.
\end{proof}

Recall that for a BV function $f$ on the real line,
we call $f^{\vee}(x)-f^{\wedge}(x)=|Df|(\{x\})$ the jump size of $f$ at point $x\in\R$.

\begin{lemma}\label{lem:biggest jump}
	Let $\Om\subset \R^d$ be open and let $f\in \BV(\Om)$. 
	For every $z\in \pi(\Om)$ such that
	$f_z\in \BV(\Om_z)$ (which holds for $\lmlo$-a.e.\ $z\in  \pi(\Om)$),
	let $\alpha(z)$ be the maximal size of all jumps of $f_z$.
	Then the map $\alpha:\pi(\Om)\rightarrow[0,\infty]$ is $\lmlo$-measurable.
\end{lemma}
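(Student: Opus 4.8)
The plan is to realise $\alpha$, off a $\lmlo$-null set, as a pointwise (in $z$) limit of explicitly $\lmlo$-measurable functions. For $r>0$ set
\[
\alpha_r(z)
\coloneqq
\sup\Bigl\{\Bigl|\vint_{(t,t+r)}f_z-\vint_{(t-r,t)}f_z\Bigr|\ :\ t\in\R,\ (t-r,t+r)\subset\Om_z\Bigr\},
\]
which is defined for $\lmlo$-a.e.\ $z\in\pi(\Om)$. First I would check that each $\alpha_r$ is $\lmlo$-measurable: for fixed $r$ and $t$, the set of admissible $z$ (those with $\{z\}\times(t-r,t+r)\subset\Om$) is a countable intersection of open sets, and on it the map $z\mapsto\int_{t-r}^{t+r}f(z,s)\intd s$ is measurable by Fubini's theorem; since for fixed $z$ the expression inside the supremum is continuous in $t$ on the open set $\{t:(t-r,t+r)\subset\Om_z\}$, the supremum may be restricted to $t\in\mathbb Q$, so $\alpha_r$ is a countable supremum of measurable functions.

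The crux is the identity $\alpha(z)=\lim_{n\to\infty}\alpha_{1/n}(z)$ for $\lmlo$-a.e.\ $z$; once this is established, measurability of $\alpha$ follows. Fix such a $z$, and also assume $|D(f_z)|(\Om_z)<\infty$, which holds for $\lmlo$-a.e.\ $z$ by \zcref{eq:slice representation for total variation}. Two estimates are used. Since $J_{f_z}=S_{f_z}$ is countable and, by \zcref{eq:fundamental theorem of calculus for BV}, $(f_z)^*$ oscillates by at most $|D(f_z)|(I)$ on $I\setminus J_{f_z}$ for every open interval $I\subset\Om_z$, the two averages appearing in $\alpha_r(z)$ both lie in a common interval of length $|D(f_z)|((t-r,t+r))$, so
\[
\Bigl|\vint_{(t,t+r)}f_z-\vint_{(t-r,t)}f_z\Bigr|\le|D(f_z)|\bigl((t-r,t+r)\bigr)
\qquad\text{whenever }(t-r,t+r)\subset\Om_z.
\]
On the other hand, by \zcref{eq_approximatejumpdefinition} at jump points and because every point of $\Om_z\setminus J_{f_z}$ is a Lebesgue point of $f_z$, for every $t_0\in\Om_z$
\[
\lim_{r\to0}\Bigl|\vint_{(t_0,t_0+r)}f_z-\vint_{(t_0-r,t_0)}f_z\Bigr|=|D(f_z)|(\{t_0\}).
\]
The second estimate yields the lower bound: $\liminf_{r\to0}\alpha_r(z)\ge|D(f_z)|(\{t_0\})$ for every $t_0\in\Om_z$, hence $\liminf_{r\to0}\alpha_r(z)\ge\alpha(z)$ after taking the supremum over $t_0$.

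The opposite inequality $\limsup_{n\to\infty}\alpha_{1/n}(z)\le\alpha(z)$ is the main obstacle, since naively bounding $\alpha_r(z)$ by the total variation on a window of size $2r$ only gives a bound of order $2\alpha(z)$ when two large jumps cluster within such a window. I would argue by contradiction: if it failed, there would be $\eps>0$, radii $r_n\downarrow0$ and points $t_n$ with $(t_n-r_n,t_n+r_n)\subset\Om_z$ and, by the first estimate, $|D(f_z)|\bigl((t_n-r_n,t_n+r_n)\bigr)>\alpha(z)+\eps$. Passing to a subsequence, either $|t_n|\to\infty$, so the intervals eventually leave every bounded set and, $|D(f_z)|$ being finite, their mass tends to $0$; or $t_n\to t_\infty\in\R$, so for each $k$ the interval $(t_n-r_n,t_n+r_n)$ is eventually contained in $(t_\infty-1/k,t_\infty+1/k)\cap\Om_z$, whence
\[
\limsup_{n\to\infty}|D(f_z)|\bigl((t_n-r_n,t_n+r_n)\bigr)\le\lim_{k\to\infty}|D(f_z)|\bigl((t_\infty-1/k,t_\infty+1/k)\cap\Om_z\bigr),
\]
and the right-hand side equals $|D(f_z)|(\{t_\infty\})\le\alpha(z)$ if $t_\infty\in\Om_z$ and $0$ otherwise, by continuity from above of the finite measure $|D(f_z)|$. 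In both cases this contradicts $|D(f_z)|((t_n-r_n,t_n+r_n))>\alpha(z)+\eps$. Hence $\alpha=\lim_{n\to\infty}\alpha_{1/n}$ holds $\lmlo$-a.e., and since the right-hand side is $\lmlo$-measurable, so is $\alpha$.
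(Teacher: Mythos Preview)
Your argument is correct, but the route differs from the paper's. The paper exploits directly that the parametrized measure $z\mapsto |D(f_z)|$ is weakly* $\lmlo$-measurable (this is recorded in the preliminaries as a consequence of the slicing theory), and then writes
\[
\alpha(z)=\lim_{k\to\infty}\max_{i\in\Z}|D(f_z)|\bigl([i2^{-k},(i+1)2^{-k}]\cap\Om_z\bigr),
\]
which is a limit of countable maxima of measurable functions; the monotonicity in $k$ of the dyadic maxima makes the identification with $\alpha(z)$ almost immediate. Your approach instead avoids the weak* measurability statement altogether: you recover measurability of the approximants $\alpha_r$ from Fubini applied to $f$ itself, and then establish $\alpha=\lim_n\alpha_{1/n}$ via the same finite-measure compactness argument that underlies the paper's identity. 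The paper's proof is shorter because it cashes in on machinery already set up; yours is more self-contained and would work equally well in contexts where the weak* measurability of the sliced variation measures is not already on the table. One small remark: the oscillation bound you use for the averages is really \zcref{eq:fundamental theorem of calculus for BV 2} rather than \zcref{eq:fundamental theorem of calculus for BV}, but this does not affect the argument.
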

\begin{proof}
	Note that $|D(f_z)|$ is weakly* $\lmlo$-measurable;
	recall \zcref{subsec:one dimensional sections}.
	For a.e.\ $z\in \pi(\Om)$
	we have
	\[
	\alpha(z)= \lim_{k\to\infty}\max_{i\in\Z} |D(f_z)|([i2^{-k},(i+1)2^{-k}]\cap \Om_z)
	.
	\]
\end{proof}

Consider $f\in \BV_{\loc}(\R^d)$.
For $|D f|$-a.e.\ and in particular for 
$|\cantor f|$-a.e.\ $x\in \R^d$ the Radon--Nikodym derivative exists:
\begin{equation}
\label{eq:radon-nikodym}
\frac{\intd Df}{\intd |Df|} (x)
=
\lim_{r\to 0}\frac{Df(B(x,r))}{|Df|(B(x,r))}
\eqqcolon
\xi(x)
\end{equation}
with $|\xi(x)|=1$.
 Recall the integral average $f_{C_{\xi(x)}(x,r)}$ over the cylinder $C_{\xi(x)}(x,r)$, see
\zcref{eq:cylinder,eq:integralaverage}.
For $r>0$ define the rescaling
\begin{equation}\label{eq:scalings def}
f_{r}(y)\coloneqq\frac{f(x+ry)-f_{C_{\xi(x)}(x,r)}}{|Df|(C(x,r))/r^{d-1}}
,
\qquad y\in C_{\xi(x)}(0,1)
.
\end{equation}
The following blow-up behavior is known:

\begin{theorem}[{\cite[Theorem 3.95]{AFP}}]
\label{eq:blowup}
Let $f:\R^d\rightarrow\R$ for which $|Df|$ is a Radon measure.
Then for $|Df|$-a.e.\ $x\in\R^d$
there exists a sequence $r_1,r_2,\ldots>0$ with $r_i\to 0$ and an increasing, nonconstant $\gamma:(-1/2,1/2)\rightarrow\R$ such that for $w:C_{\xi(x)}(0,1)\rightarrow\R$ given by
\(
w(y)
=
\gamma(\langle y, \xi(x)\rangle)
\)
we have
\[
f_{r_i}
\to
w
\qquad\text{in }L^1(C_{\xi(x)}(0,1))
.
\]
\end{theorem}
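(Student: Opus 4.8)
The plan is a blow-up and compactness argument. Since $|Df|$ is a Radon measure we may take $f\in\BV_\loc(\R^d)$, and after a rotation we may assume that the Radon--Nikodym direction $\xi(x)$ of \zcref{eq:radon-nikodym} equals $e_d$, so that $C_{\xi(x)}(\cdot,\cdot)$ is the axis-parallel cylinder $C(\cdot,\cdot)$ of \zcref{eq:cylinder} and $f_r$ from \zcref{eq:scalings def} lives on $C(0,1)$. I would fix $x$ in the full-measure set where both: (i) the limit \zcref{eq:radon-nikodym} exists with $|\xi(x)|=1$ (one also uses the analogue of \zcref{eq:radon-nikodym} with cylinders, valid by the differentiation theory of Radon measures since the cylinders $C(x,r)$ are translates and dilates of the fixed convex body $C(0,1)$); and (ii) there are $c_0>0$ and $r_i\to 0$ with $|Df|(C(x,r_i/2))\ge c_0\,|Df|(C(x,r_i))$ for all $i$. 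Property (ii) holds $|Df|$-a.e.: otherwise, on a compact subset $K$ of a positive-measure exceptional set one would have $|Df|(C(x,r/2))\le\delta\,|Df|(C(x,r))$ for all $x\in K$ and all small $r$; covering $K$ by cylinders $C(x_j,r/2)$, $x_j\in K$, with bounded overlap (Besicovitch covering, applicable since these are dilates of a fixed convex body) and summing gives $|Df|(K)\le\delta\,C_{\mathrm{Bes}}\,|Df|(K^{(r)})$, which as $r\to0$ yields $|Df|(K)\le\delta\,C_{\mathrm{Bes}}\,|Df|(K)$, impossible once $\delta\,C_{\mathrm{Bes}}<1$.

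From $Df_r=|Df|(C(x,r))^{-1}(S_{x,r})_{\#}Df$ with $S_{x,r}(z)=(z-x)/r$ one reads off $(f_r)_{C(0,1)}=0$, $|Df_r|(C(0,1))=1$, and $Df_r(C(0,1))=Df(C(x,r))/|Df|(C(x,r))\to\xi(x)=e_d$. Hence $\{f_{r_i}\}$ is bounded in $\BV(C(0,1))$ (Poincar\'e on the Lipschitz domain $C(0,1)$), so after passing to a subsequence, still written $r_i$, we get $f_{r_i}\to w$ in $L^1(C(0,1))$ with $w\in\BV(C(0,1))$, and, along a further subsequence, $Df_{r_i}\rightharpoonup Dw$ and $|Df_{r_i}|\rightharpoonup\sigma$ weakly* with $\sigma\in\measp{C(0,1)}$.

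The crux is that $Df_{r_i}$ becomes asymptotically parallel to $e_d$. Writing $Df_{r_i}=g_i\,|Df_{r_i}|$ with $|g_i|=1$ and using $|e_d-g_i|^2=2(1-\langle e_d,g_i\rangle)$ together with $(1-t)^2\le 2(1-t)$ on $[-1,1]$,
\[
\int_{C(0,1)}|e_d-g_i|^2\intd|Df_{r_i}|=2\bigl(1-\langle e_d,Df_{r_i}(C(0,1))\rangle\bigr)\longrightarrow 0.
\]
Testing against $\varphi\in C_c(C(0,1))$ and using Cauchy--Schwarz, the displayed convergence forces $\langle u,Dw\rangle=0$ for every $u\perp e_d$ and $\langle e_d,Dw\rangle=\sigma$; since $\sigma\ge0$ this gives $Dw=\sigma\,e_d$ and $|Dw|=\sigma$. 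A $\BV$ function on $C(0,1)=B_{d-1}(0,1)\times(-1/2,1/2)$ whose first $d-1$ partial derivatives vanish depends only on the last variable (one-dimensional slicing, \zcref{subsec:one dimensional sections}), so $w(y)=\gamma(\langle y,e_d\rangle)$ with $\gamma\in\BV((-1/2,1/2))$, and $D_d w=\sigma\ge0$ forces $\gamma$ to be increasing. Finally $\gamma$ is nonconstant, since by (ii) and weak* convergence of the positive measures $|Df_{r_i}|$,
\begin{align*}
|Dw|(C(0,1))&=\sigma(C(0,1))\ge\sigma\bigl(\overline{C(0,1/2)}\bigr)\ge\limsup_i|Df_{r_i}|\bigl(C(0,1/2)\bigr)\\
&=\limsup_i\frac{|Df|(C(x,r_i/2))}{|Df|(C(x,r_i))}\ge c_0>0.
\end{align*}
As $f_{r_i}\to w=\gamma(\langle\cdot,e_d\rangle)$ in $L^1(C(0,1))$ and $e_d=\xi(x)$, undoing the rotation gives the assertion.

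The main obstacle is twofold. Conceptually, the decisive point is the saturation estimate above: it is precisely the maximality $|\xi(x)|=1$ of the ratio $|Df_r(C(0,1))|/|Df_r|(C(0,1))$ that forces $Dw$ to be one-dimensional; without it the compactness argument yields only an unstructured $\BV$ limit. Technically, the step most easily overlooked is the non-degeneracy (ii): the normalization in \zcref{eq:scalings def} only guarantees $|Df_r|(C(0,1))=1$, and a priori all of this mass could escape to $\partial C(0,1)$ in the weak* limit, leaving $\gamma$ constant and the statement vacuous — ruling this out along a suitable sequence $r_i$ is exactly the role of the Besicovitch-density argument in the first paragraph.
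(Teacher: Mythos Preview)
The paper does not give its own proof of this statement: it is quoted verbatim as \cite[Theorem~3.95]{AFP} and used as a black box, so there is nothing to compare your argument against on the paper's side. Your sketch is essentially the standard proof from \cite{AFP}: normalize and rescale as in \zcref{eq:scalings def}, use Poincar\'e on the cylinder and BV compactness to extract an $L^1$-limit $w$, then exploit the saturation $|Df_{r_i}(C(0,1))|/|Df_{r_i}|(C(0,1))\to|\xi(x)|=1$ to force $Dw$ to be a nonnegative multiple of $e_d$, whence $w$ depends only on the last variable and is nondecreasing; the Besicovitch density step you single out is precisely what prevents the limit from being constant. The argument is correct as a sketch; the parenthetical inequality $(1-t)^2\le2(1-t)$ is not actually needed (the identity $|e_d-g_i|^2=2(1-\langle e_d,g_i\rangle)$ already gives $\int|e_d-g_i|^2\,d|Df_{r_i}|\to0$, and Cauchy--Schwarz with $|Df_{r_i}|(C(0,1))=1$ then handles the linear term), and in the density argument you should make the threshold $r_0(x,\delta)$ uniform on a compact subset of positive measure before applying Besicovitch, but these are routine.
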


\begin{definition}
\label{defi:nonconcaveblowup}
We call $\gamma$ or $w$ from \zcref{eq:blowup} a \emph{blow-up} of $f$ at $x$.

There may exist several different blow-ups $\gamma$ of $f$ in $x$, and if one of them is not concave
then we say that $f$ has a non-concave blow-up at $x$.
\end{definition}

For $x\in\mathbb{R}^d$ and $r>0$ denote by $\C(x,r)$ the set of cylinders with length and radius $r$ that are centered in $x$.
For Radon measures $\mu,\nu$ denote
\begin{align*}
\overline{D(\C)}_\mu\nu(x)
&=
\limsup_{r\rightarrow0}
\sup_{C\in\C(x,r)}
\frac{
\nu(C)
}{
\mu(C)
}
,\\
\underline{D(\C)}_\mu\nu(x)
&=
\liminf_{r\rightarrow0}
\inf_{C\in\C(x,r)}
\frac{
\nu(C)
}{
\mu(C)
}
.
\end{align*}

Since (closed) cylinders also satisfy the covering theorem \cite[Theorem~1.28]{MR3409135},
due to Morse's Covering Theorem (see e.g. \cite[Theorem 1.147]{FoLe}),
we can prove the following \zcref*[noref,nocap]{lemma:measurederivative} the same way as \cite[Lemma~1.2]{MR3409135}.
\begin{lemma}
\label{lemma:measurederivative}
Let $0<\alpha<\infty$.
Then
\begin{enumerate}
\item
\(A\subset\{\underline{D(\C)}_\mu\nu\leq\alpha\}\)
implies
\(\nu(A)\leq\alpha\mu(A)\)
,
\item
\(A\subset\{\overline{D(\C)}_\mu\nu\geq\alpha\}\)
implies
\(\nu(A)\geq\alpha\mu(A)\)
.
\end{enumerate}
\end{lemma}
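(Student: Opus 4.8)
The plan is to prove \zcref{lemma:measurederivative} by the classical Vitali-type differentiation argument, carried out once for each of the two parts; the two arguments are mirror images of one another, so I will describe part~(i) in full and then indicate the changes for part~(ii). Throughout, I take the cylinders in $\C(x,r)$ to be closed, so that the covering theorem for cylinders (Morse's covering theorem and \cite[Theorem~1.28]{MR3409135}, as recalled just before the statement) is available, and I fix $\eps\in(0,\alpha)$; all bounds will be uniform enough that the two assertions follow on letting $\eps\to0$.

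For part~(i), suppose $A\subset\{\underline{D(\C)}_\mu\nu\le\alpha\}$; I may assume $\mu(A)<\infty$, since otherwise there is nothing to prove. By outer regularity of $\mu$ I choose an open set $U\supset A$ with $\mu(U)<\mu(A)+\eps$. For each $x\in A$, the inequality $\underline{D(\C)}_\mu\nu(x)\le\alpha<\alpha+\eps$ provides arbitrarily small radii $r$ together with cylinders $C\in\C(x,r)$ satisfying $\nu(C)\le(\alpha+\eps)\mu(C)$; since a cylinder of small enough radius centred at $x$ is contained in $U$ (uniformly in its orientation, as the shape of $C(x,r)$ does not depend on the orientation), I may additionally require $C\subset U$. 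The family $\mathcal F$ of all such cylinders is then a fine cover of $A$. Because closed cylinders of arbitrary orientation satisfy the covering theorem, the associated Vitali covering theorem applies to $\mathcal F$ relative to the Radon measure $\nu$, yielding a countable pairwise disjoint subfamily $\{C_i\}_i\subset\mathcal F$ with $\nu\bigl(A\setminus\bigcup_i C_i\bigr)=0$. Hence, using countable subadditivity, disjointness, and $C_i\subset U$,
\[
\nu(A)\le\nu\Bigl(\bigcup_i C_i\Bigr)=\sum_i\nu(C_i)\le(\alpha+\eps)\sum_i\mu(C_i)=(\alpha+\eps)\,\mu\Bigl(\bigcup_i C_i\Bigr)\le(\alpha+\eps)\,\mu(U)<(\alpha+\eps)\bigl(\mu(A)+\eps\bigr),
\]
and letting $\eps\to0$ gives $\nu(A)\le\alpha\mu(A)$.

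For part~(ii), suppose $A\subset\{\overline{D(\C)}_\mu\nu\ge\alpha\}$; I may assume $\nu(A)<\infty$. This time I use outer regularity of $\nu$ to pick an open $U\supset A$ with $\nu(U)<\nu(A)+\eps$, and let $\mathcal F$ consist of the closed cylinders $C\subset U$ centred at points of $A$, of arbitrarily small radius, with $\nu(C)\ge(\alpha-\eps)\mu(C)$; such cylinders are available at every point of $A$ because $\overline{D(\C)}_\mu\nu(x)\ge\alpha>\alpha-\eps$, so $\mathcal F$ is again a fine cover of $A$. Applying the Vitali covering theorem now relative to $\mu$ gives a countable disjoint subfamily $\{C_i\}_i\subset\mathcal F$ with $\mu\bigl(A\setminus\bigcup_i C_i\bigr)=0$, whence
\[
\mu(A)\le\mu\Bigl(\bigcup_i C_i\Bigr)=\sum_i\mu(C_i)\le\frac1{\alpha-\eps}\sum_i\nu(C_i)=\frac1{\alpha-\eps}\,\nu\Bigl(\bigcup_i C_i\Bigr)\le\frac{\nu(U)}{\alpha-\eps}<\frac{\nu(A)+\eps}{\alpha-\eps},
\]
and letting $\eps\to0$ yields $\nu(A)\ge\alpha\mu(A)$.

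The one ingredient that is not entirely routine is the Vitali covering theorem for cylinders of arbitrary orientation: unlike balls, cylinders carry an orientation, so a priori it is not clear that a fine cover by such sets admits an essentially disjoint countable subcover. This is precisely what is delivered by Morse's covering theorem via \cite[Theorem~1.28]{MR3409135}, as indicated in the text; once it is granted, the proof is a verbatim adaptation of \cite[Lemma~1.2]{MR3409135}. The remaining ingredients — outer regularity, the bookkeeping with $\bigcup_i C_i\subset U$, and the limit $\eps\to0$ — are standard, so I do not anticipate any further obstacle.
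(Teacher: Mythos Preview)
Your proposal is correct and follows exactly the approach the paper indicates: the paper simply defers to \cite[Lemma~1.2]{MR3409135} after noting that closed cylinders satisfy the requisite covering theorem (Morse), and you have written out precisely that standard Vitali-type argument in full. There is nothing to add.
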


\begin{lemma}
\label{lemma:densitysingular}
Let $\mu,\nu$ be Radon measures that are mutually singular.
Then
\begin{align*}
\text{For $\nu$-a.e. }&x
&
\underline{D(\C)}_{\mu+\nu}\nu(x)
=
\overline{D(\C)}_{\mu+\nu}\nu(x)
&=
1
,\\
\text{For $\mu$-a.e. }&x
&
\underline{D(\C)}_{\mu+\nu}\nu(x)
=
\overline{D(\C)}_{\mu+\nu}\nu(x)
&=
0
.
\end{align*}
\end{lemma}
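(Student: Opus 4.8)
The plan is to deduce the lemma from the differentiation estimates in \zcref{lemma:measurederivative}, applied throughout with the reference measure $\sigma\coloneqq\mu+\nu$. Since $\mu$ and $\nu$ are mutually singular, I would fix a Borel set $E\subset\R^d$ with $\mu(E)=0$ and $\nu(\R^d\setminus E)=0$. Because $\nu(C)\le\mu(C)+\nu(C)=\sigma(C)$ for every cylinder $C$, each quotient $\nu(C)/\sigma(C)$ occurring in the definition of $\underline{D(\C)}_\sigma\nu$ and $\overline{D(\C)}_\sigma\nu$ is at most $1$ (cylinders with $\sigma(C)=0$ arise only at points lying outside the support of $\sigma$, a set that is both $\mu$-null and $\nu$-null and hence negligible for the assertions). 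Consequently $\underline{D(\C)}_\sigma\nu\le\overline{D(\C)}_\sigma\nu\le1$ at $\mu$-a.e.\ and at $\nu$-a.e.\ point, so it suffices to prove the two one-sided bounds $\underline{D(\C)}_\sigma\nu\ge1$ $\nu$-a.e.\ and $\overline{D(\C)}_\sigma\nu=0$ $\mu$-a.e.

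For the bound $\underline{D(\C)}_\sigma\nu\ge1$ $\nu$-a.e., I would fix $\alpha\in(0,1)$ and set $A_\alpha\coloneqq E\cap\{\underline{D(\C)}_\sigma\nu\le\alpha\}$. Since a Radon measure on $\R^d$ is $\sigma$-finite, it is enough to show $\nu(A_\alpha\cap U)=0$ for every Borel set $U$ with $\nu(U)<\infty$. Part (i) of \zcref{lemma:measurederivative}, applied to $A_\alpha\cap U$ (which is still contained in $\{\underline{D(\C)}_\sigma\nu\le\alpha\}$) with reference measure $\sigma$, gives
\[
\nu(A_\alpha\cap U)\le\alpha\,\sigma(A_\alpha\cap U)=\alpha\bigl(\mu(A_\alpha\cap U)+\nu(A_\alpha\cap U)\bigr)=\alpha\,\nu(A_\alpha\cap U),
\]
where we used $\mu(A_\alpha\cap U)\le\mu(E)=0$; since $\nu(A_\alpha\cap U)<\infty$ and $\alpha<1$, this forces $\nu(A_\alpha\cap U)=0$. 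Hence $\nu(A_\alpha)=0$, and letting $\alpha\uparrow1$ along a sequence gives $\nu(E\cap\{\underline{D(\C)}_\sigma\nu<1\})=0$; as $\nu$ is concentrated on $E$, this is the desired bound.

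For the bound $\overline{D(\C)}_\sigma\nu=0$ $\mu$-a.e., I would fix $\alpha>0$ and set $B_\alpha\coloneqq(\R^d\setminus E)\cap\{\overline{D(\C)}_\sigma\nu\ge\alpha\}$. Part (ii) of \zcref{lemma:measurederivative} with reference measure $\sigma$ gives
\[
\alpha\,\mu(B_\alpha)\le\alpha\,\sigma(B_\alpha)\le\nu(B_\alpha)\le\nu(\R^d\setminus E)=0,
\]
so $\mu(B_\alpha)=0$. Letting $\alpha\downarrow0$ along a sequence gives $\mu((\R^d\setminus E)\cap\{\overline{D(\C)}_\sigma\nu>0\})=0$, and since $\mu(E)=0$ this yields $\overline{D(\C)}_\sigma\nu=0$ $\mu$-a.e., completing the argument.

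I do not expect a substantial obstacle here: the whole proof is the classical Besicovitch-type computation for mutually singular measures, and all the analytic content is already packaged into \zcref{lemma:measurederivative} (which in turn rests on Morse's covering theorem for cylinders). The only points that need a little care are the $\sigma$-finiteness reduction used in the second paragraph, the (routine) Borel measurability of the cylinder-derivative level sets, and the harmless treatment, in the first paragraph, of cylinders of vanishing $\sigma$-measure.
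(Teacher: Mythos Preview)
Your proof is correct and, like the paper's, reduces everything to \zcref{lemma:measurederivative}. The only tactical difference is that the paper applies \zcref{lemma:measurederivative} with reference measure $\mu$ (not $\mu+\nu$) to first establish $\underline{D(\C)}_\mu\nu(x)=\infty$ for $\nu$-a.e.\ $x$, and then reads off $\underline{D(\C)}_{\mu+\nu}\nu(x)=1$ from the algebraic identity $\nu(C)/(\mu(C)+\nu(C))=1/(1+\mu(C)/\nu(C))$; you instead apply the lemma directly with reference measure $\sigma=\mu+\nu$, which is arguably cleaner and spares that conversion step.
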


\begin{proof}
Since $\mu$ and $\nu$ are mutually singular there exists a set $B$ with \(\nu=\restrictzero\nu B\) and \(\mu(B)=0\).
As a consequence of \zcref{lemma:measurederivative}, the set of all points $x\in B$ with \(\underline{D(\C)}_\mu\nu(x)<\infty\) must have $\nu(B)=0$.
This means for $\nu$-almost every $x$ we have \(\underline{D(\C)}_\mu\nu(x)=\infty\) and thus
\[
1
\geq
\overline{D(\C)}_{\mu+\nu}\nu(x)
\geq
\underline{D(\C)}_{\mu+\nu}\nu(x)
=
1
.
\]
The second statement follows similarly.
\end{proof}

For $|Df|$-almost every $x\in\Omega$ the Radon--Nikodym derivative \zcref{eq:radon-nikodym} exists.
Since $\cantor f$ and $Df-\cantor f$ are mutually singular, it follows from \zcref{lemma:densitysingular} that
for $|\cantor f|$-almost every $x\in\Omega$ we have
\begin{equation}\label{eq:Cantor density}
\lim_{r\to 0}\frac{|\cantor f|(C_{\xi(x)}(x,r))}{|Df|(C_{\xi(x)}(x,r))}=1
.
\end{equation}

\section{Proof of \texorpdfstring{\zcref{thm:main}}{Theorem~1.1}}\label{sec:main proof}

Fix a point $x\in A$.
Discarding a set of $|Df|$-measure zero, 
we can assume that the Radon--Nikodym derivative \zcref{eq:radon-nikodym} exists with $|\xi(x)|=1$ and that the conclusion of \zcref{eq:blowup} and \zcref{eq:Cantor density} hold.
Rotate, so that $\xi(x)=e_d$.

Recall the definition \zcref{eq:HL def} and
note that $\M_{C(0,1)}w(y) \ge w^{*}(y)$ for all $y\in C(0,1)$.
The main ingredient is the following simple \zcref*[noref,nocap]{lem:mwlwsomewhere}.

\begin{lemma}
\label{lem:mwlwsomewhere}
$\M_{C(0,1)}w(y) > w^{*}(y)$ for some $y\in C(0,1)$.
\end{lemma}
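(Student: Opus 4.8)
The statement concerns a function $w$ on the cylinder $C(0,1)$ of the special product form $w(y) = \gamma(\langle y, e_d\rangle) = \gamma(y_d)$, where $\gamma:(-1/2,1/2)\to\R$ is increasing, nonconstant, and — crucially, since $x\in A$ — can be chosen to be \emph{non-concave}. The goal is to find one point $y\in C(0,1)$ at which the centered maximal function (over balls contained in $C(0,1)$) strictly exceeds $w^*(y)$. The plan is to argue by contradiction: suppose $\M_{C(0,1)}w(y) \le w^*(y)$ for \emph{every} $y\in C(0,1)$. Since the reverse inequality always holds, this forces $\M_{C(0,1)}w = w^*$ identically, i.e.\ $w^*$ is $2$-superharmonic in the sense of \zcref{def:superharmonic} (after checking lower semicontinuity of the relevant representative, using that $\gamma$, being increasing, has one-sided limits everywhere and that its upper/lower approximate limits coincide off a countable set).

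First I would reduce to a one-dimensional statement about $\gamma$. Because $w$ depends only on $y_d$, averaging $w$ over a ball $B(y,r)\subset C(0,1)$ gives a weighted average of $\gamma$ over the interval $(y_d - r, y_d+r)$ with the semicircular weight coming from the cross-sectional areas of the ball. Concretely, $w_{B(y,r)} = \int_{-1}^1 \gamma(y_d + rt)\,\varphi(t)\intd t$ for a fixed probability density $\varphi$ on $(-1,1)$ that is even and strictly positive on $(-1,1)$. So the hypothesis $\M_{C(0,1)}w \le w^*$ says exactly: for every $s\in(-1/2,1/2)$ and every admissible $r$, the $\varphi$-weighted average of $\gamma$ around $s$ at scale $r$ is at most $\gamma^*(s)$.

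Next I would exploit this to force concavity of $\gamma$, contradicting non-concavity. The mechanism: by \zcref{thm:superharmonic and supersol}, a $2$-superharmonic function is a $2$-supersolution, i.e.\ $\int w\,\Delta\varphi \le 0$ for all nonnegative test functions — equivalently $\Delta w \le 0$ as a distribution. For $w(y)=\gamma(y_d)$ this distributional Laplacian is just $\gamma''$ (in the distributional sense on the $y_d$-variable, tensored with constants in the other variables), so $\gamma'' \le 0$ as a distribution on $(-1/2,1/2)$. A distribution on an interval with nonpositive second derivative is (a.e.\ equal to) a concave function; combined with the fact that the precise representative $w^*=\gamma^*$ is already the "good" representative, this yields that $\gamma$ agrees with a concave function, hence every blow-up built from $\gamma$ is concave — contradicting $x\in A$, i.e.\ the choice of a non-concave blow-up $\gamma$ guaranteed by \zcref{eq:blowup} and \zcref{defi:nonconcaveblowup}.

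The main obstacle I anticipate is the careful handling of the \emph{pointwise vs.\ a.e.} and \emph{representative} issues at the two ends of this chain. On the one hand, $\M_{C(0,1)}w(y)\le w^*(y)$ is a pointwise statement for \emph{every} $y$, whereas $2$-superharmonicity as defined requires lower semicontinuity; I must check that the relevant representative of $w$ (e.g.\ $\gamma^\wedge$, the left-continuous increasing representative, or $\gamma^*$) is lower semicontinuous and still satisfies the sub-mean-value inequality, so that \zcref{thm:superharmonic and supersol} applies. A cleaner route that sidesteps the superharmonic machinery entirely: directly show that $w_{B(y,r)}\le w^*(y)=\gamma^*(y)$ for all small $r$ forces, for every $s$ and every $h>0$ small, the discrete concavity inequality $\gamma^*(s) \ge \tfrac12(\gamma^\vee(s-h) + \gamma^\wedge(s+h))$ or something comparable, by testing the weighted average against the monotonicity of $\gamma$ and sending the "mass" of $\varphi$ toward the endpoints $\pm 1$ — and then observe that such an inequality holding at every point is equivalent to concavity of the increasing function $\gamma$. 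Deciding which of these two executions is shorter, and pinning down the exact inequality that is both \emph{implied} by the sub-mean-value property and \emph{equivalent} to concavity, is the real content; the rest is routine.
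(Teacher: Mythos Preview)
Your proposal is correct and follows essentially the same route as the paper: argue by contradiction, conclude $w^*=\M_{C(0,1)}w$ is $2$-superharmonic, invoke \zcref{thm:superharmonic and supersol} to pass to a $2$-supersolution, reduce to the one-dimensional function $\gamma$, and deduce concavity of $\gamma$, contradicting the choice of a non-concave blow-up. The paper handles your representative worry by applying \zcref{thm:superharmonic and supersol} once more in dimension one to obtain a $2$-superharmonic (hence concave) $\widetilde\gamma=\gamma$ a.e., and then uses monotonicity of $\gamma$ to upgrade this to $\widetilde\gamma=\gamma$ everywhere; your variant via $\gamma''\le 0$ distributionally is an equally valid shortcut.
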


\begin{proof}
For a contradiction suppose that
$\M_{C(0,1)}w(y) \le w^{*}(y)$ for all $y\in C(0,1)$, which means so that in fact equality holds.
Then $w^{*}$ is $2$-superharmonic on $C(0,1)$,
which means $w$ is a $2$-supersolution by \zcref{thm:superharmonic and supersol}. 
But since $w$ only depends on the $d$th coordinate,
necessarily $\gamma$ is  a $2$-supersolution on $(-1/2,1/2)$.
Thus by \zcref{thm:superharmonic and supersol},
there is a function $\widetilde{\gamma}=\gamma$ a.e.\ in $(-1/2,1/2)$, which is $2$-superharmonic 
and thus necessarily a concave function
(see e.g. \cite[Theorem 3.2.2]{MR1801253})
Since $\gamma$ is increasing, necessarily $\widetilde{\gamma}=\gamma$ everywhere in $(-1/2,1/2)$.
But this contradicts the fact that $\gamma$ is not concave, finishing the proof.
\end{proof}

By \zcref{lem:mwlwsomewhere} there exists a point
\(y'=(z',t')\in B_{d-1}(0,1)\times(-1/2,1/2)\),
for which $\M_{C(0,1)}w(y') > w^{*}(y')$.
Thus there exists $0<r\le 1/2$ such that $B(y',r)\subset C(0,1)$ and
\[
w_{B(y',r)}> w^*(y').
\]
Recall that $w$ is bounded.
By continuity, that means there is an $0<r'< 1/2$ such that
\begin{equation}\label{eq:delta def}
\delta
\coloneqq
w_{B(y',r')}- w^*(y')
>0
.
\end{equation}
Since $w$ only depends on the last coordinate, this inequality remains true for all $y=(z,t')$ with $z\in B_{d-1}(0,1-r')$.

To describe the idea of what we do next, assume for the moment that $\gamma$ was continuous so that $w^*=w$.
Then for
\[
t'_1
=
\inf\{t:\gamma(t)\geq\gamma(t')+\delta\}
>
t'
\]
we have
\(
w(z',t'_1)
=
w(y')+\delta
\)
and \(w(z',t)<w(y')+\delta\) for $t<t'_1$.
Since $w$ is increasing in the last coordinate so is $y\mapsto w_{B(y,r')}$, which means that
for all $y\in B_{d-1}(0,1-r')\times(t',t'_1)$ we would have (supposing that $B(y,r')\subset C(0,1)$)
\[
\M_{C(0,1)}w(y)
\geq
w_{B(y,r')}
\geq
w_{B(y',r')}
=
w(y')+\delta
>
w(y)
.
\]
\begin{figure}
\centering
\includegraphics{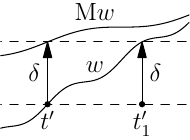}
\caption{Neighborhood of $t'$ where \(\M_{C(0,1)}w(t')-w^{*}(t')=\delta>0\).}
\label{fig:delta}
\end{figure}
Using 
\[
|D\gamma'|((t',t'_1))
=
\gamma(t'_1)-\gamma(t')
=
\delta,
\]
we could deduce that the maximal function is larger than the function itself in a $|Dw|$-large set:
\begin{align*}
|Dw|(C(0,1)\cap \{\M_{C(0,1)}w>w^{*}\})
&\geq
|Dw|(B_{d-1}(0,1-r')\times(t',t'_1))
\\
&=
\delta
\lml{B_{d-1}(0,1-r')}
,
\end{align*}
see \zcref{fig:delta}.

The
following \zcref*[noref,nocap]{lem:positivedensity} achives the corresponding estimate for the approximants $f_{r_i}$ of $w$.

\begin{lemma}
\label{lem:positivedensity}
For $\delta,r'>0$ from \zcref{eq:delta def} we have
\[
\limsup_{i\rightarrow\infty}
|Df_{r_i}|(C(0,1)\cap \{\M_{C(0,1)} f_{r_i}> f_{r_i}^*\})
>
\tfrac{\delta}{10}
\lml{B_{d-1}(0,1-r')}
.
\]
\end{lemma}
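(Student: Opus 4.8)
The heuristic above is exact only when the blow-up $\gamma$ is continuous. If $\gamma$ has a jump, then even the analogue $|Dw|(C(0,1)\cap\{\M_{C(0,1)}w>w^*\})\ge\delta\,\lml{B_{d-1}(0,1-r')}$ can \emph{fail}: e.g.\ if $\gamma$ is a step function, then $\M_{C(0,1)}w=w^*$ in a neighbourhood of the jump, so the left-hand side vanishes. The reason \zcref{lem:positivedensity} is stated for the approximants $f_{r_i}$ rather than for $w$ is that they carry asymptotically no jump part: by \zcref{eq:scalings def} we have $|Df_{r_i}|(C(0,1))=1$, and by \zcref{eq:Cantor density} (together with $|Df|=|\ac f|+|\cantor f|+|\jump f|$) we get $|\jump{f_{r_i}}|(C(0,1))\to0$ and $|\ac{f_{r_i}}|(C(0,1))\to0$. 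Thus $f_{r_i}$ \emph{resolves} a jump of $w$ into a steep but essentially continuous increase, and for such an increase the centered maximal function strictly exceeds the precise representative on a definite fraction of it. The plan is to (i) record one–dimensional estimates for $\gamma$, and (ii) transfer them to $f_{r_i}$ via the $L^1$-convergence, exploiting this extra regularity.

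For (i): since $w$ depends only on the last coordinate, $\Gamma(t):=w_{B((z,t),r')}$ is independent of $z$, continuous and nondecreasing, with $\Gamma(t')=w^*(y')+\delta$ by \zcref{eq:delta def}; moreover $\M_{C(0,1)}w(z,t)\ge\max\{\Gamma(t),\gamma^*(t)\}$, where $\gamma^*(t)=w^*(z,t)$. Let $\tau_1:=\sup\{t>t':\Gamma>\gamma^*\text{ on }(t',t)\}$ (arranging, as we may, that $t'$ is a continuity point of $\gamma$, so that $\tau_1>t'$); then $\M_{C(0,1)}w>w^*$ on the slab $B_{d-1}(0,1-r')\times(t',\tau_1)$. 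Monotonicity of $\Gamma$ gives $\Gamma(\tau_1)\ge\Gamma(t')=\gamma^*(t')+\delta$, while maximality of $\tau_1$ and continuity of $\Gamma$ force $\gamma^{\vee}(\tau_1)\ge\Gamma(\tau_1)$; hence the variation of $\gamma$ on $(t',\tau_1]$ is $\gamma^{\vee}(\tau_1)-\gamma^*(t')\ge\delta$. Writing $j:=\gamma^{\vee}(\tau_1)-\gamma^{\wedge}(\tau_1)$ for the jump of $\gamma$ at $\tau_1$, this reads $|D\gamma|((t',\tau_1))+j\ge\delta$, and therefore
\[
|D\gamma|\bigl((t',\tau_1)\bigr)+\tfrac12 j\ \ge\ \max\bigl\{\delta-\tfrac12 j,\ \tfrac12 j\bigr\}\ \ge\ \tfrac12\delta .
\]
On the slab, the variation of $w$ equals $|D\gamma|((t',\tau_1))\,\lml{B_{d-1}(0,1-r')}$; the remaining $\tfrac12 j$ will be supplied by the resolved jump at $\tau_1$. (Near the boundary of the cylinder one argues with a slightly smaller radius; we suppress this.)

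For (ii): $f_{r_i}\to w$ in $L^1(C(0,1))$ implies that $\{f_{r_i}\}$ is uniformly integrable, so for each fixed radius the averages $(z,t)\mapsto(f_{r_i})_{B((z,t),\rho)}$ are equicontinuous (uniformly in $i$) and converge, hence converge locally uniformly to $w_{B((z,t),\rho)}$; consequently $\liminf_i\M_{C(0,1)}f_{r_i}\ge\M_{C(0,1)}w$ locally uniformly on $C(0,1)$. Fix a compact $K\subset(t',\tau_1)$ with $\Gamma-\gamma^*\ge\beta_0>0$ on $K$ and $|D\gamma|(K)\ge|D\gamma|((t',\tau_1))-\eta$. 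For large $i$ we then have $\M_{C(0,1)}f_{r_i}>\gamma^*+\tfrac12\beta_0$ on $B_{d-1}(0,1-r')\times K$, so $\M_{C(0,1)}f_{r_i}>f_{r_i}^*$ there except on the "overshoot" set $\{f_{r_i}^*\ge\gamma^*+\tfrac12\beta_0\}$, whose one–dimensional sections carry only $o(1)$ variation (here one uses $|\jump{f_{r_i}}|(C(0,1))\to0$, \zcref{eq:jump part representation}, and lower semicontinuity of the one–dimensional variation). Separately, for large $i$ the function $f_{r_i}$ passes from $\approx\gamma^{\wedge}(\tau_1)$ to $\approx\gamma^{\vee}(\tau_1)$ over an interval shrinking to $\{\tau_1\}$; a direct one–variable estimate — at a point $t$ just below $\tau_1$, take a radius just large enough that $B((z,t),\rho)$ reaches past $\tau_1$ — shows that $\M_{C(0,1)}f_{r_i}(z,t)>f_{r_i}^*(z,t)$ whenever $f_{r_i}^*(z,t)$ lies below the level $\gamma^*(\tau_1)$, i.e.\ on the lower half of the resolved jump, which is disjoint from $B_{d-1}(0,1-r')\times K$ and carries $\ge\tfrac12 j-o(1)$ of the one–dimensional variation. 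Adding the two contributions, using the slicing $|Df_{r_i}|\ge|D_d f_{r_i}|=\lmlo\otimes(z\mapsto|D((f_{r_i})_z)|)$ (\zcref{eq:slice representation for total variation}) and $(f_{r_i}^*)_z=((f_{r_i})_z)^*$ (\zcref{eq:upper and lower repr sections}), and finally Fatou's lemma in $z$, yields
\[
\limsup_i|Df_{r_i}|\bigl(C(0,1)\cap\{\M_{C(0,1)}f_{r_i}>f_{r_i}^*\}\bigr)
\ \ge\
\bigl(|D\gamma|((t',\tau_1))+\tfrac12 j\bigr)\,\lml{B_{d-1}(0,1-r')}
\ \ge\ \tfrac12\delta\,\lml{B_{d-1}(0,1-r')},
\]
which exceeds $\tfrac{\delta}{10}\,\lml{B_{d-1}(0,1-r')}$.

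The main obstacle is the mass bookkeeping in step (ii). Unlike $w$, the functions $f_{r_i}$ are neither monotone nor continuous in the last coordinate, so the set $\{\M_{C(0,1)}f_{r_i}>f_{r_i}^*\}$ is not a fixed slab and $|Df_{r_i}|$ could, a priori, concentrate on the overshoot set that is excluded from it; controlling the $|Df_{r_i}|$-mass (not merely the Lebesgue mass) of the relevant sets, and verifying that the resolved jump really contributes $\ge\tfrac12 j$ of variation inside the good set, is exactly where one must use carefully the vanishing jump part of $f_{r_i}$ (via \zcref{eq:Cantor density}), the one–dimensional slicing, and lower semicontinuity of the variation along almost every line. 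These losses, together with the factor $\tfrac12$ above, are what bring the constant down from $\delta$ to $\tfrac{\delta}{10}$.
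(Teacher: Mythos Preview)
Your outline identifies the right ingredients (vanishing jump part, slicing, $L^1$-convergence of averages), but step (ii) contains a genuine gap that your final paragraph flags without resolving. The claim that the ``overshoot'' set $\{f_{r_i}^*\ge\gamma^*+\tfrac12\beta_0\}\cap K$ carries only $o(1)$ of $|D(f_{r_i})_z|$ does not follow from the tools you cite. Convergence $(f_{r_i})_z\to\gamma$ in $L^1$ forces the overshoot set to have small \emph{Lebesgue} measure, but $|D(f_{r_i})_z|$ is (after rescaling) essentially the Cantor part and can concentrate on arbitrarily small sets; $|\jump{f_{r_i}}|\to0$ does not prevent this, and lower semicontinuity of variation gives only $\liminf_i|D(f_{r_i})_z|(U)\ge|D\gamma|(U)$ on open $U$, never the upper bound you need. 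The same problem recurs in the ``resolved jump'' part: you assert that the lower half of the resolved jump carries $\ge\tfrac12 j-o(1)$ variation and lies inside $\{\M_{C(0,1)}f_{r_i}>f_{r_i}^*\}$, but neither is established --- $(f_{r_i})_z$ need not be monotone near $\tau_1$, and the centred average over a ball reaching past $\tau_1$ can easily fall below $f_{r_i}^*(z,t)$ if $f_{r_i}$ overshoots below $t$.

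The paper avoids this bookkeeping entirely by never trying to transfer $|D\gamma|$-mass to $|Df_{r_i}|$-mass. Instead it fixes two heights $t<s$ (chosen once, independently of $i$ and $z$) and uses Egorov's theorem to obtain a set $H\subset(1-r')B_{d-1}$ of measure $>\tfrac12\lml{(1-r')B_{d-1}}$ on which $f_{r_i}^{\vee}(z,t)\to\gamma^*(t)$ and $f_{r_i}^{\vee}(z,s)\to\gamma^*(s)$ uniformly. For each $z\in H$ and each large $i$ it then defines $t_z\in[t,s]$ as the \emph{first} time $(f_{r_i})_z^{\vee}$ rises by $\delta/5$ above $(f_{r_i})_z^{\wedge}(t)$; this guarantees $|D(f_{r_i})_z|([t,t_z])\ge\delta/5$ directly from the definition, with no reference to $|D\gamma|$. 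The small-jumps condition ($z\in D_i$) then caps $(f_{r_i})_z^{\vee}$ on $[t,t_z]$ at $(f_{r_i})_z^{\wedge}(t)+2\delta/5$, while the uniform $L^1$-convergence of ball averages gives $\M_{C(0,1)}f_{r_i}\ge(f_{r_i})^{\vee}(z,t)+3\delta/5$ on the whole of $[t,s]$, so $\M_{C(0,1)}f_{r_i}>f_{r_i}^*$ on all of $[t,t_z]$. The point is that the interval $[t,t_z]$ is constructed from $f_{r_i}$ itself, so oscillations of $f_{r_i}$ are absorbed into the definition of $t_z$ rather than having to be controlled after the fact.
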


Before we prove \zcref{lem:positivedensity} we use it to conclude \zcref{thm:main}.

\begin{proof}[Proof of \zcref{thm:main}]
First, note that $\{\Ml R f>f^*\}$ is a Borel set and thus $|Df|$-measurable.
Since $r_i\rightarrow0$, in particular $r_i<R$ for $i$ large enough.
Recall the definition of the rescalings \zcref{eq:scalings def}.
Then
\begin{align*}
&\limsup_{r\to 0}\frac{|Df|(C(x,r)\cap \{\Ml R f>f^*\})}{|Df|(C(x,r))}\\
&\qquad \ge \limsup_{i\rightarrow\infty}
\frac{|Df|(C(x,r_i)\cap \{\M_{C(x,r_i)} f>f^*\})}{|Df|(C(x,r_i))}\\
&\qquad = \limsup_{i\rightarrow\infty}
|Df_{r_i}|(C(0,1)\cap \{\M_{C(0,1)} f_{r_i}> f_{r_i}^*\})
\\
&\qquad \ge \tfrac{\delta}{10}
\lml{B_{d-1}(0,1-r')}
>0.
\end{align*}
Due to \zcref{eq:Cantor density} this means
\[
\limsup_{r\to 0}\frac{|\cantor f|(C(x,r)\cap \{\Ml R f>f^*\})}{|\cantor f|(C(x,r))}>0.
\]
By \zcref{lemma:densitysingular}
this implies
\[
\lim_{r\to 0}\frac{|\cantor f|(C(x,r)\cap \{\Ml R f=f^*\})}{|\cantor f|(C(x,r))}=1
\]
for $|\cantor f|$-a.e.\ $x\in \{\Ml R f=f^*\}$.
Thus necessarily
$|\cantor f|(A\cap \{\Ml R f=f^*\})=0$.
\end{proof}

It remains to prove \zcref{lem:positivedensity}.

\begin{proof}[Proof of \zcref{lem:positivedensity}]
Since $|Df_r|(C(0,1))=1$, by lower semicontinuity we get $|Dw|(C(0,1))\le1$, and then
by \zcref{eq:slice representation for total variation},
necessarily $|D\gamma|((-1/2,1/2))\le1/\lml{B_{d-1}(0,1)}$, and so $\gamma$ and $w$ are bounded.
The convergence $f_{r_i}\to w$ in $L^1(C(0,1))$ implies pointwise $\lmo$-a.e.\ convergence for a subsequence (not relabeled).
By \zcref{eq:representatives outside jump set} this means also
$f_{r_i}^{\vee}(y)\to w^{*}(y)=w^{\vee}(y)$ as $i\to\infty$ for $\lmo$-a.e.\ $y\in C(0,1)$.

From \zcref{eq:upper and lower repr sections} recall
$((f_{r_i})^{\wedge})_z(t)=((f_{r_i})_z)^{\wedge}(t)$
for $\lmlo$-a.e.\ $z\in B_{d-1}$ and every $t\in (-1/2,1/2)$.
Thus we can simply use the notation $(f_{r_i})^{\wedge}_z(t)$, and
$(f_{r_i})^{\vee}_z(t)$.
Similarly by \zcref{eq:upper and lower repr sections},
for $\lmlo$-a.e.\ $z\in B_{d-1}$ and every $t\in (-1/2,1/2)$ we have
\[
	\gamma^*(t)=w^*(z,t)
	,
\]
which thus in fact holds for all $(z,t)\in C(0,1)$.

From \zcref{eq:delta def} and the
continuity of the integral we find a $-1/2<t<t'$, and $0<r''\leq r'-(t'-t)$, such that
for all $z\in (1-r') B_{d-1}$ we still have
\[
w_{B((z,t),r'')}> w^{*}(z,t)+4\delta/5
,
\]
and for $\lmlo$-a.e.\ $z\in B_{d-1}$ we have
$f_{r_i}^{\vee}(z,t)\to w^{\vee}(z,t)=w^*(z,t)$ as $i\to\infty$.
We also find $t+r''<s<1/2$
such that $f_{r_i}^{\vee}(z,s)\to w^{\vee}(z,s)$ as $i\to\infty$ for $\lmlo$-a.e.\
$z\in B_{d-1}$.
Then for all $v\in [t,s]$, since $\gamma$ is increasing, we have
\begin{equation}\label{eq:estimate for h}
	w_{B((z,v),\min\{r'',1/2-v\})}> w^{*}(z,t)+4\delta/5.
\end{equation}
By the convergence $f_{r_i}\to w$ in $L^1(C(0,1))$ as $i\to\infty$, we have
\[
(f_{r_i}-w)_{B((z,v),\min\{r'',1/2-v\})}
\to
0
\]
uniformly for all $z\in (1-r') B_{d-1}$ and $v\in [t,s]$.
Thus for sufficiently large $i$, we have
\begin{equation}\label{eq:uniform closeness}
	|
	(f_{r_i})_{B((z,v),\min\{r'',1/2-v\})}
	-
	w_{B((z,v),\min\{r'',1/2-v\})}
	|
	<\delta/5
\end{equation}
Consider the set $D_i$ of those $z \in B_{d-1}$ for which
\begin{equation}\label{eq:only small jumps}
	(f_{r_i})_z\in\BV((-1/2,1/2))\textrm{ has jumps at most size }
	\ \delta/5.
\end{equation}
By \zcref{lem:biggest jump}, $D_i$ is $\lmlo$-measurable.
Due to \zcref{eq:Cantor density} we have $|\jump{f_{r_i}}|(C(0,1))\rightarrow0$ and thus by \zcref{eq:slice representation for total variation} we have
\begin{equation}\label{eq:size of Dj sets}
	\lim_{i\to\infty}\lml{B_{d-1}\setminus D_i}= 0.
\end{equation}
By Egorov's theorem, we find an $\lmlo$-measurable set
$H\subset (1-r') B_{d-1}$ such that $\lml H> \frac 12 \lml{(1-r') B_{d-1}}$,
and $f_{r_i}^{\vee}(z,t)\to w^{*}(z,t)$
and $f_{r_i}^{\vee}(z,s)\to w^{\vee}(z,s)$
as $i\to\infty$ uniformly for all $z\in H$.
From \zcref{eq:estimate for h,eq:uniform closeness} we then get for sufficiently large $i$,
for all $z\in H$ and for all $v\in [t,s]$, that
\begin{equation}\label{eq:max func much bigger}
	(f_{r_i})_{B((z,v),\min\{r'',1/2-v\})}\ge f_{r_i}^{\vee}((z,t))+3\delta/5.
\end{equation}
For all sufficiently large $i$ and for all $z\in H$, we also have
\begin{align*}
	f_{r_i}^{\vee}(z,s)
	&\ge w^{\vee}(z,s)-\delta/5\\
	&\ge w_{B((z,t),r'')}-\delta/5&& \text{since $\gamma$ is increasing}\\
	&>(f_{r_i})_{B((z,t),r'')}
	-2\delta/5&&\text{by \zcref{eq:uniform closeness}}\\
	&>f_{r_i}^{\vee}((z,t))+\delta/5&&\text{by \zcref{eq:max func much bigger}}\\
	&\ge f_{r_i}^{\wedge}((z,t))+\delta/5.
\end{align*}
By \zcref{lem:capacity and Hausdorff measure}
and \zcref{thm:quasisemicontinuity} we can assume that
$(z,t)\mapsto(f_{r_i})_z^{\vee}(t)$ restricted to $H\times (-1/2,1/2)$ is upper semicontinuous,
and that $(z,t)\mapsto(f_{r_i})_z^{\wedge}(t)$
restricted to $H\times (-1/2,1/2)$
is lower semicontinuous, for every $i\in\N$.
For all $z\in H$ and for all sufficiently large $i$,
we find the smallest $t\le t_{z}\le s$ such that
\[
f_{r_i}^{\vee}((z,t_{z}))
\ge f_{r_i}^{\wedge}((z,t))+\delta/5.
\]
Note that now
\begin{equation}\label{eq:lsc}
	H\ni z\mapsto t_z\quad\textrm{is lower semicontinuous.}
\end{equation}
By \zcref{eq:only small jumps}
for every $z\in H\cap D_i$ and $v\in [t,t_{z}]$ we have
\begin{equation}\label{eq:choice of tjz}
	f_{r_i}^{\vee}((z,v))\\
	\le  f_{r_i}^{\wedge}((z,t))+2\delta/5.
\end{equation}
By \zcref{eq:fundamental theorem of calculus for BV 2}, we have
\begin{align*}
	|D[(f_{r_i})_z]|([t,t_{z}])
	&\ge (f_{r_i}^{\vee})_z(t_{z})-(f_{r_i}^{\wedge})_z(t)
	\ge \delta/5.
\end{align*}
By \zcref{eq:max func much bigger,eq:choice of tjz}, we have	
for all $v\in [t,t_{z}]$,
\[
(f_{r_i})_{B((z,v),\min\{r'',1/2-v\})}
\ge f_{r_i}^{\vee}((z,v))+\delta/5.
\]
Recalling also \zcref{eq:f star and vee}, for every $z\in H\cap D_i$ we get
\[
|D[(f_{r_i})_z]|([t,t_{z}] \cap \{\M_{C(0,1)} f_{r_i}> f_{r_i}^*\})
\ge \delta/5.
\]
Integrating over $H\cap D_i$,
where the required measurability is guaranteed by \zcref{eq:lsc},
by \zcref{eq:slice representation for total variation} we get
\[
|Df_{r_i}|(C(0,1)\cap \{\M_{C(0,1)} f_{r_i}> f_{r_i}^*\}) 
\ge  \tfrac{\delta}{5}\lml {H\cap D_i}
> \tfrac{\delta}{10}\lml {(1-r') B_{d-1}}
\]
for sufficiently large $i$, due to \zcref{eq:size of Dj sets}.
\end{proof}

\section{Sobolev regularity}\label{sec:sobolev}

In this \zcref*[noref,nocap]{sec:sobolev}, we will use the results proved in \zcref{sec:main proof} to prove Sobolev regularity
of the maximal function of certain functions of bounded variation.

\subsection{Quasicontinuity}

First, we prove that $\M f$ is 1-quasicontinuous outside of $J_f$, see \zcref{defi:quasicontinuous,thm:quasicontinuity of noncentered maximal function}.
The proof of \zcref{thm:quasicontinuity of noncentered maximal function} is analogous to the one given in \cite[Theorem 4.1]{panubvsobolev} for the uncentered maximal function;
we note that a few
statements in \cite[Sections 3-5]{panubvsobolev} were incorrect since only $f\in \BV_{\loc}(\Om)$
was assumed, when in fact $\var f<\infty$ is needed.

As before, $\Om\subset \R^d$ is always an open set.
We start with some preliminary results.

\begin{lemma}[{\cite[Theorem~107]{lecturenoteshajlasz}}]
\label{johnpoincare_original}
Let $\Om$ be a John domain and let $f\in W^{1,1}(\Omega)$.
Then $f\in L^{d/(d-1)}(\Om)$ and
\[
\Bigl(
\int_\Om
|f-f_\Om|^{\frac d{d-1}}
\Bigr)^{\frac{d-1}d}
\lesssim
\int_\Om
|\nabla f|
.
\]
\end{lemma}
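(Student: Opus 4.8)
The plan is to run the classical chaining argument for Sobolev--Poincar\'e inequalities and then reach the endpoint exponent by truncation. First I would fix a Whitney decomposition $\Om=\bigcup_j Q_j$ into dyadic cubes with $\diam(Q_j)\sim\dist(Q_j,\R^d\setminus\Om)$ and with bounded overlap of the slightly dilated cubes, and single out a Whitney cube $Q_0$ lying near the John center of $\Om$. The John condition then yields, for every Whitney cube $Q$, a chain $Q=Q^0,Q^1,\dots,Q^{N(Q)}=Q_0$ of Whitney cubes, consecutive ones being neighbouring and comparably sized, such that $\diam(Q^k)\gtrsim\dist(Q^k,Q)$ and such that the chain cubes have overlap bounded independently of $Q$. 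Deducing this ``Boman chain condition'' from the definition of a John domain is the geometric core of the argument and the step I expect to be the main obstacle; it is also where the John constant enters every quantitative estimate below.

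Granting the chain condition, for a.e.\ $x\in\Om$, with $Q$ the Whitney cube containing $x$, I would telescope
\[
|f(x)-f_{Q_0}|\le|f(x)-f_Q|+\sum_{k=0}^{N(Q)-1}|f_{Q^{k+1}}-f_{Q^k}|,
\]
handle $|f(x)-f_Q|$ by a further dyadic telescoping inside $Q$ down to $x$, and bound each increment by the $(1,1)$-Poincar\'e inequality on the dilated cube containing the two consecutive cubes, which gives $|f_{Q^{k+1}}-f_{Q^k}|\lesssim\diam(Q^k)\,\vint_{Q^k}|\nabla f|$. Because $\diam(Q^k)\gtrsim|x-y|$ for $y\in Q^k$ and $|Q^k|\sim\diam(Q^k)^d$, each increment is $\lesssim\int_{Q^k}|x-y|^{1-d}|\nabla f(y)|\intd y$, and the bounded overlap of all the cubes involved collapses the sum to the pointwise Riesz-potential estimate
\[
|f(x)-f_{Q_0}|\lesssim\int_\Om\frac{|\nabla f(y)|}{|x-y|^{d-1}}\intd y=:I_1|\nabla f|(x)\qquad\text{for a.e.\ }x\in\Om.
\]

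Finally I would combine this with two standard facts. The Riesz potential $I_1$ maps $L^1(\Om)$ into weak $L^{d/(d-1)}(\Om)$, which already yields the weak-type Sobolev--Poincar\'e inequality $\lm{\{|f-f_{Q_0}|>\lambda\}}\lesssim(\lambda^{-1}\int_\Om|\nabla f|)^{d/(d-1)}$ for all $\lambda>0$. Applying this with $f$ replaced by the Maz'ya truncations $\min\{(|f-f_{Q_0}|-2^k)_+,2^k\}$, $k\in\Z$, whose gradients have essentially disjoint supports summing to $|\nabla f|$, and summing the resulting geometric series (using $\sum a_k^q\le(\sum a_k)^q$ for $q=d/(d-1)\ge1$, $a_k\ge0$) upgrades the weak estimate to the strong bound $\Vert f-f_{Q_0}\Vert_{L^{d/(d-1)}(\Om)}\lesssim\int_\Om|\nabla f|$; in particular $f\in L^{d/(d-1)}(\Om)$. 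Since $\Vert g-g_\Om\Vert_p\le 2\Vert g-c\Vert_p$ for every constant $c$, replacing $f_{Q_0}$ by $f_\Om$ costs only a harmless factor, which completes the argument. (If $\Om$ were, say, a Lipschitz or uniform domain one could instead extend $f$ to $W^{1,1}(\R^d)$ and quote the Euclidean Gagliardo--Nirenberg--Sobolev inequality directly; general John domains, however, need not be $W^{1,1}$-extension domains, which is precisely why the chaining argument is needed.)
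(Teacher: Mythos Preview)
The paper does not give a proof of this lemma; it simply quotes it as \cite[Theorem~107]{lecturenoteshajlasz} and moves on to deduce the variant in \zcref{johnpoincare}. So there is no ``paper's own proof'' to compare your proposal against. Your sketch is a correct outline of the standard argument (Whitney decomposition, Boman chain condition from the John property, telescoping with the $(1,1)$-Poincar\'e on cubes to obtain the pointwise Riesz-potential bound, weak-type $(1,d/(d-1))$ for $I_1$, and Maz'ya truncation to pass from weak to strong type), and is in the spirit of what one finds in Haj\l asz's notes.
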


We require the following variant.

\begin{lemma}
\label{johnpoincare}
Let $\Om$ be a John domain and let $f\in L^1_\loc(\Om)$ with $\var_\Om f<\infty$.
Then $f\in L^1(\Om)$ and
\[
\frac1{\lm\Om^{\frac1d}}
\int_\Om
|f-f_\Om|
\lesssim
\var_\Om f
.
\]
\end{lemma}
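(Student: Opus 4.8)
The plan is to reduce the statement to \zcref{johnpoincare_original} by approximation, exploiting throughout that a John domain $\Om$ is bounded, so $\lm\Om<\infty$. First I would dispose of the case $f\in W^{1,1}(\Om)$: here \zcref{johnpoincare_original} already gives $f\in L^{d/(d-1)}(\Om)$ together with the Sobolev--Poincaré bound, and H\"older's inequality with exponents $d/(d-1)$ and $d$ (so that $\Vert 1\Vert_{L^d(\Om)}=\lm\Om^{1/d}$) turns it into what we want, namely $\int_\Om|f-f_\Om|\le\lm\Om^{1/d}\bigl(\int_\Om|f-f_\Om|^{d/(d-1)}\bigr)^{(d-1)/d}\lesssim\lm\Om^{1/d}\int_\Om|\nabla f|$. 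In particular $f\in L^1(\Om)$ in this case.

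Next I would treat $f\in\BV(\Om)$, i.e.\ $f\in L^1(\Om)$ with $|Df|(\Om)<\infty$. Using the standard smooth approximation of BV functions (\cite[Theorem~3.9]{AFP}) one picks $f_k\in C^\infty(\Om)$ with $f_k\to f$ in $L^1(\Om)$ and $\int_\Om|\nabla f_k|\to|Df|(\Om)$; each $f_k$ then lies in $W^{1,1}(\Om)$, so the first step applies to $f_k$. Letting $k\to\infty$, the left-hand side converges to $\lm\Om^{-1/d}\int_\Om|f-f_\Om|$ (using $(f_k)_\Om\to f_\Om$) and the right-hand side to $\var_\Om f=|Df|(\Om)$, which gives the estimate for $f$.

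Finally, for general $f\in L^1_\loc(\Om)$ with $\var_\Om f<\infty$ it suffices to show $f\in L^1(\Om)$, after which the previous step applies verbatim. Fix an auxiliary ball $B_0\subset\subset\Om$ and consider the truncations $f^N\coloneqq\max\{-N,\min\{f,N\}\}$. Each $f^N$ is bounded, hence in $L^1(\Om)$, and truncation does not increase the total variation, so $f^N\in\BV(\Om)$ with $\var_\Om f^N\le\var_\Om f$. Applying the $\BV$ case to $f^N$ and restricting the integral to $B_0$ bounds $|(f^N)_{B_0}-(f^N)_\Om|$ by a constant independent of $N$; since moreover $(f^N)_{B_0}\to f_{B_0}$ by dominated convergence on $B_0$ (as $|f^N|\le|f|\in L^1(B_0)$), the averages $(f^N)_\Om$ are bounded uniformly in $N$. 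Hence $\int_\Om|f^N|\le\int_\Om|f^N-(f^N)_\Om|+\lm\Om\,|(f^N)_\Om|$ is bounded uniformly in $N$, and Fatou's lemma yields $\int_\Om|f|<\infty$.

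The routine parts are the first two steps; the genuine obstacle is the bootstrap from $L^1_\loc$ to $L^1$ in the last step, where one cannot yet invoke the Poincaré inequality for $f$ itself and must instead extract it from the truncations. The role of the fixed ball $B_0$ is precisely to pin down the otherwise uncontrolled global averages $(f^N)_\Om$, and this is the point requiring care; everything else is standard BV machinery together with \zcref{johnpoincare_original}.
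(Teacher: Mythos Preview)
Your proof is correct and follows the same three-step architecture as the paper: reduce to $W^{1,1}$ via H\"older and \zcref{johnpoincare_original}, pass to $\BV$ by smooth approximation, and then bootstrap from $L^1_\loc$ to $L^1$ by truncation.

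The one substantive difference lies in how the truncation step is handled. You anchor the averages $(f^N)_\Om$ by comparing them to $(f^N)_{B_0}$ over a fixed interior ball, which works fine. The paper instead observes that $\int_\Om|f-f_\Om|$ is comparable to $\inf_{c}\int_\Om|f-c|$, and that this infimum is attained at the median $\median\Om f$, which is finite for any a.e.-finite $f$ and satisfies $\median\Om{f_N}=\median\Om f$ once $N>|\median\Om f|$. This gives a single reference value that is stable under truncation, so monotone convergence applies directly without needing to control $(f^N)_\Om$ separately. The median device is a bit slicker, but your auxiliary-ball argument is equally valid and arguably more elementary.
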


\begin{proof}
Observe, that for $f\in L^1(\Omega)$ we have
\begin{equation}
\label{eq:fomegavsinf}
\inf_{c\in\mathbb{R}}
\int_\Omega
|f-c|
\leq
\int_\Omega
|f-f_\Om|
\leq
\inf_{c\in\mathbb{R}}
\int_\Omega
|f-c|
+
|f_\Omega-c|
\leq
2
\inf_{c\in\mathbb{R}}
\int_\Omega
|f-c|
.
\end{equation}
As a consequence of Cavalieri's principle,
\[
\int_\Omega
|f-c|
=
\int_{-\infty}^c
\lm{\Omega\cap\{f\leq\lambda\}}
\intd\lambda
+
\int_c^\infty
\lm{\Omega\cap\{f>\lambda\}}
\intd\lambda
,
\]
the infimum is attained for the median,
\[
c
=
\median \Omega f
\coloneqq
\inf\{\lambda\in\R:\lm{\Omega\cap\{f>\lambda\}}\leq\lm \Omega/2\}
,
\]
which is finite for any $f$ that is finite almost everywhere.

Let $f$ be bounded.
Since a John domain is by definition bounded, we have $f\in L^1(\Om)$,
and by e.g.\ \cite[Theorem 3.9]{AFP} we find a sequence of functions $f_i\in W^{1,1}(\Om)$ with $f_i\to f$ in $L^1(\Om)$
and $\var_\Om f_i\to \var_\Om f$.
By \zcref{eq:fomegavsinf}, H\"older's inequality and \zcref{johnpoincare_original} we can conclude 
\[
\frac1{\lm\Om^{\frac1d}}
\int_\Om
|f-\median \Omega f|
\leq
\frac1{\lm\Om^{\frac1d}}
\int_\Om
|f-f_\Om|
\leq
\Bigl(
\int_\Om
|f-f_\Om|^{\frac d{d-1}}
\Bigr)^{\frac{d-1}d}
\lesssim
\var_\Om f.
\]

For a general $f\in L^1_\loc(\Omega)$ and $N\in\mathbb{N}$ truncate \(f_N=\max\{\min\{f_N,N\},-N\}\).
Then for $N>|\median \Omega f|$ we have $\median\Omega{f_N}=\median\Omega f$.
Thus, we can conclude from monotone convergence and the previous bounded case
\begin{align*}
\frac1{\lm\Om^{\frac1d}}
\int_\Omega
|f-\median \Omega f|
&=
\lim_{N\rightarrow\infty}
\frac1{\lm\Om^{\frac1d}}
\int_\Omega
|f_N-\median \Omega{f_N}|
\\
&\lesssim
\lim_{N\rightarrow\infty}
\var_\Om(f_N)
=
\var_\Om f
.
\end{align*}
In particular $f\in L^1(\Omega)$, and we can conclude the proof from \zcref{eq:fomegavsinf}.
\end{proof}

\begin{lemma}\label{lem:maximal function and upper representative}
	Let $f\in\BV_{\loc}(\Om)$.
	Then $\M f(x)\ge f^{\vee}(x)$ for
	every $x\in \Om\setminus S_f$.
\end{lemma}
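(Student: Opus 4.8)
The plan is to observe that the asserted inequality is essentially immediate from the definitions, the only point requiring a moment's thought being that arbitrarily small radii are admissible in the supremum defining $\M f$. First I would record that, since $\Om$ is open and $x\in\Om$, we have $\dist(x,\R^d\setminus\Om)>0$; hence for every sufficiently small $r>0$ the radius $r$ satisfies $0<r<\dist(x,\R^d\setminus\Om)$, and so $\M f(x)\ge f_{B(x,r)}$ for all such $r$ by the definition \zcref{eq:HL def} of the local maximal function.

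Next I would take $\limsup_{r\to 0}$ in this inequality and invoke the definition \zcref{eq:preciserepresentative} of the precise representative to conclude $\M f(x)\ge \limsup_{r\to 0}f_{B(x,r)}=f^*(x)$. Here there is no subtlety with the $\limsup$: since $x\in\Om\setminus S_f$ is by definition a Lebesgue point of $f$, the averages $f_{B(x,r)}$ in fact converge as $r\to 0$ to the finite value $f^*(x)$.

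Finally, since $x\in\Om\setminus S_f$, the identity \zcref{eq:representatives outside jump set} gives $f^*(x)=f^{\wedge}(x)=f^{\vee}(x)$, and combining this with the previous step yields $\M f(x)\ge f^{\vee}(x)$, which is exactly the claim. I do not expect any genuine obstacle in this argument; the only step worth stating carefully is that the admissible radii in \zcref{eq:HL def} shrink to $0$, which is automatic from the openness of $\Om$, and that $f^*(x)$ is finite at the Lebesgue point $x$, which is built into the definition of $\Om\setminus S_f$.
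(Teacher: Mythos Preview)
Your argument is correct and follows exactly the paper's approach: the paper's proof simply records that $\M f\ge f^*$ always, and then invokes \zcref{eq:representatives outside jump set} to get $f^*(x)=f^{\vee}(x)$ for $x\in\Om\setminus S_f$. Your version spells out the same two steps with a bit more detail on why small radii are admissible.
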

\begin{proof}
	Recall that always $\M f\ge f^*$, and $f^*(x)=f^{\vee}(x)$ for every
	$x\in \Om\setminus S_f$ by \zcref{eq:representatives outside jump set}.
\end{proof}

\begin{proposition}\label{prop:weak type estimate inifinity}
	Let $f\in L^1_{\loc}(\Om)$ with \(\var_\Omega f<\infty\).
	Then
	\[
	\capa_1(\{x\in \Om\colon \M|f|(x)=\infty\})=0.
	\]
\end{proposition}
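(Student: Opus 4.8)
The plan is to show that the exceptional set $E\coloneqq\{x\in\Om:\M|f|(x)=\infty\}$ is, after a localization, contained in $S_f\setminus J_f$, which is $\smo$-negligible, and then to conclude via the equivalence $\capa_1(\cdot)=0\iff\smo(\cdot)=0$ from \zcref{eq:null sets of capa and Hausdorff}.

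First I would record that $f\in L^1_\loc(\Om)$ with $\var_\Om f<\infty$ forces $f\in\BV_\loc(\Om)$, so that $S_f$, $J_f$ and \zcref{eq:Sf and Jf} are available. Then I would localize. Cover $\Om$ by countably many open balls $B_i=B(y_i,r_i)$ with $\overline{B(y_i,3r_i)}\subset\Om$, and set $U_i\coloneqq B(y_i,3r_i)$, which is a John domain with $\var_{U_i}f\le\var_\Om f<\infty$, so $f\in L^1(U_i)$ by \zcref{johnpoincare}. For $x\in B_i$, every ball $B(x,\rho)$ with $0<\rho<r_i$ lies in $U_i$ and is centered at a point at distance $>2r_i$ from $\partial U_i$, hence $\sup_{0<\rho<r_i}\vint_{B(x,\rho)}|f|\le\M_{U_i}|f|(x)$. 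For the remaining radii $r_i\le\rho<\dist(x,\R^d\setminus\Om)$ I claim that $\rho\mapsto\vint_{B(x,\rho)}|f|$ stays finite and bounded: if $\dist(x,\R^d\setminus\Om)<\infty$ this follows from \zcref{johnpoincare} applied to the John domain $B(x,\dist(x,\R^d\setminus\Om))\subset\Om$ (on which $\var f<\infty$), and if $\Om=\R^d$ it follows from elementary estimates on spherical averages that use only $\var_{\R^d}|f|\le\var_{\R^d}f<\infty$ (bounding the $\rho$-derivative of $\vint_{B(x,\rho)}|f|$ via the divergence theorem; for $d=1$ the function is simply bounded). Consequently $\M|f|(x)=\infty$ forces $\M_{U_i}|f|(x)=\infty$, so $E\cap B_i\subset\{\M_{U_i}|f|=\infty\}$, and by countable subadditivity of $\capa_1$ it suffices to show $\capa_1(\{x\in U:\M_U|f|(x)=\infty\})=0$ for a John domain $U$ on which $f\in L^1(U)$.

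The heart of the matter is the observation that $\M_U|f|(x)<\infty$ as soon as $x\notin S_f$ or $x\in J_f$. If $x\notin S_f$, then $f^*(x)$ is a real number and $\vint_{B(x,\rho)}|f|\to|f^*(x)|$ as $\rho\to0$ by the Lebesgue-point property. If $x\in J_f$, then splitting $B(x,\rho)$ into the two half-balls determined by the jump direction and using \zcref{eq_approximatejumpdefinition} gives $\vint_{B(x,\rho)}|f|\to\tfrac12\bigl(|f^+(x)|+|f^-(x)|\bigr)$, again a real number. In either case $\vint_{B(x,\rho)}|f|$ is bounded as $\rho\to0$, while for $\rho$ bounded below it is bounded by $\|f\|_{L^1(U)}/\lm{B(0,\rho_0)}$; hence $\M_U|f|(x)<\infty$. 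Therefore $\{x\in U:\M_U|f|(x)=\infty\}\subset S_f\setminus J_f$, which has $\smo$-measure zero by \zcref{eq:Sf and Jf}, so its $\capa_1$ vanishes by \zcref{eq:null sets of capa and Hausdorff}, which is the claim.

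The step I expect to be the main obstacle is the localization in the second paragraph, that is, ruling out that large radii cause $\M|f|$ to be infinite, since $f$ is only assumed locally integrable; this is precisely where \zcref{johnpoincare} enters (together with a separate elementary estimate in the borderline case $\Om=\R^d$). Once integrability of $f$ on the relevant balls is in hand, the rest is the soft fact that an infinite value of the centered maximal function can only originate from the small scales, where it is governed by the precise representative and the one-sided traces of $f$.
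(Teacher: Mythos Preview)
Your proof is correct and rests on the same two ingredients as the paper's: integrability on balls via the Poincar\'e inequality of \zcref{johnpoincare} handles the contribution of radii bounded away from zero, while the fine structure of BV functions (\zcref{eq:Sf and Jf} together with \zcref{eq:null sets of capa and Hausdorff}) handles the small radii. The organization is different, however. The paper does not localize; it fixes $x$ with $\M|f|(x)=\infty$, picks a maximizing sequence of radii, and splits into three cases according to whether the radii tend to $0$, to a finite positive limit, or to $\infty$. In the first case the paper observes $\M|f|(x)=|f|^*(x)$ and uses that $|f|^*$ is finite $\capa_1$-a.e.; your treatment via $S_f$ and $J_f$ of $f$ itself amounts to the same thing. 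In the case $\Om=\R^d$ the paper invokes the Sobolev-type fact that $f-c\in L^{d/(d-1)}(\R^d)$ for some constant $c$ and then H\"older to get $\vint_{B(x,r)}|f-c|\to 0$; this is somewhat cleaner than your sketched derivative estimate on $\rho\mapsto\vint_{B(x,\rho)}|f|$, though your estimate is also valid (it gives $|\tfrac{d}{d\rho}\vint_{B(x,\rho)}|f||\le \vint_{B(x,\rho)}|D|f||\lesssim \rho^{-d}\var_{\R^d}f$, which is integrable in $\rho$ for $d\ge 2$, and the case $d=1$ is immediate since then $f$ is essentially bounded). Your localization step is slightly heavier machinery than the paper's direct case split, but it buys you the convenience of working on a fixed domain with $f\in L^1$.
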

\begin{proof}
For $x\in\Om$ we call a sequence $r_1,r_2,\ldots\in\R$ a \emph{maximizing sequences} if $f_{B(x,r_i)}\rightarrow\M|f|(x)$,
We group the set of all $x\in\Om$ with $\M|f|(x)=\infty$ into three parts, based on their maximizing sequences.

First consider those $x$ for which all maximizing sequences satisfy $r_i\rightarrow0$.
Then $\M|f|(x)=|f|^*(x)$ and
we know that $|f|^*(x)<\infty$ for $\capa_1$-a.e.\ $x\in \Om$, due to 
\zcref{eq:null sets of capa and Hausdorff,eq:representatives outside jump set,eq:representatives in jump set,eq:Sf and Jf}.

Next we consider those $x$ for which there exists an $0<r<\infty$ and a maximizing sequence with $r_i\rightarrow r$.
Then $B(x,r)\subset\Om$ and
\(
\M |f|(x)=|f|_{B(x,r)}
\)
and by \(\var_{B(x,r)} f\le\var_\Omega f<\infty\) and \zcref{johnpoincare} we have $f\in L^1(B(x,r))$.

For all remaining $x$ there exists a maximizing sequence with $r_i\rightarrow\infty$. 
Now necessarily $\Om=\R^d$. 
Then by e.g.\ \cite[Theorem 3.47]{AFP} there exists a $c\in\R$ such that $f-c\in L^{d/(d-1)}(\R^d)$.
That means
\[
\vint_{B(x,r)}|f-c|
\le \left(\vint_{B(x,r)}|f-c|^{d/(d-1)}\right)^{(d-1)/d}\to 0
\]
as $r\to \infty$ and therefore $\M|f|(x)=c<\infty$.
\end{proof}

\begin{lemma}\label{lem:uniform convergence of u}\cite[Lemma 3.9]{panubvsobolev}
	For any dimension $d\in\mathbb{N}$ there exists a $C_1\geq0$ such that the following holds:
	Let $f\in\BV_{\loc}(\Om)$, let $G\subset \Om$,
	and let $\eps>0$. Then there exists an open set
	$U\supset G$ such that $\capa_1(U)\le C_1\capa_1(G)+\eps$ and
	\[
	\frac{1}{\mathcal L(B(x,r))}\int_{B(x,r)\cap G}|f|
	\to 0\quad\textrm{as }r\to 0
	\]
	locally uniformly for $x\in \Om\setminus U$.
\end{lemma}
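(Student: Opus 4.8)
This is \cite[Lemma~3.9]{panubvsobolev}; we sketch the strategy one would follow.

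The plan is to separate the size of $f$ from the thinness of $G$ by truncation, and then to control the remainder by a capacitary covering argument. First we would make some reductions. Replacing $f$ by $|f|$, which again lies in $\BV_{\loc}(\Om)$, we may assume $f\ge 0$; we may assume $G$ is Borel; and, exhausting $\Om$ by balls $B_j\Subset\Om$ and multiplying $f$ by cut-off functions $\varphi$ --- which changes $f$ only outside $B_j$ while keeping $\varphi f\in\BV(\R^d)\cap L^1(\R^d)$ --- we may, when proving the claim on a fixed compact subset of $\Om$, assume $f\in\BV(\R^d)\cap L^1(\R^d)$. The single open set $U$ is then assembled from the sets produced over this exhaustion, using that $\capa_1$ is an outer capacity which is countably subadditive and behaves well under decompositions of bounded overlap; the dimensional constant $C_1$ absorbs the cost of this assembly together with the covering step below.

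Next we would truncate $f=\min\{f,N\}+(f-N)^+$. Since $\fint_{B(x,r)}(f-N)^+\to (f^*(x)-N)^+$ as $r\to 0$ at every $x\in\Om\setminus S_f$, and, at the jump points, to the average of $(f^+(x)-N)^+$ and $(f^-(x)-N)^+$, the average $\fint_{B(x,r)}(f-N)^+$ tends to $0$ outside $\{f^{\vee}>N\}\cup(S_f\setminus J_f)$. By the boxing inequality, \zcref{eq:null sets of capa and Hausdorff}, and $\sm{S_f\setminus J_f}=0$, the $\capa_1$ of this set is at most $CN^{-1}\Vert f\Vert_{\BV(\R^d)}$, which tends to $0$ as $N\to\infty$; enlarging it to an open set by outer regularity of $\capa_1$, we absorb it into $U$ at the cost of $\eps$. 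Since $\tfrac1{\lm{B(x,r)}}\int_{B(x,r)\cap G}\min\{f,N\}\le N\fint_{B(x,r)}\ind{G}$, everything reduces to the case $f=\ind{G}$: producing an open $U\supset G$ with $\capa_1(U)\le C_1\capa_1(G)+\eps$ for which $\fint_{B(x,r)}\ind{G}\to 0$ locally uniformly on $\Om\setminus U$.

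Finally we would treat this reduced statement. If $\lm G=0$ it is immediate: the average is identically zero, and outer regularity of $\capa_1$ supplies an open $U\supset G$ with $\capa_1(U)\le\capa_1(G)+\eps$; this case already covers the intended applications, in which $G$ is contained in an approximate discontinuity set and hence $\lm G=0$. When $\lm G>0$ the convergence must hold uniformly off $U$, which forces $U$ to contain every point of positive upper Lebesgue density of $G$. I expect the main obstacle to be showing that this density set --- and a suitable open neighborhood of it --- still satisfies $\capa_1\le C_1\capa_1(G)$, and that the convergence is locally uniform there; one would establish this by a covering / weak-type argument in the spirit of \zcref{lemma:measurederivative} and \cite[Lemma~1.2]{MR3409135}, using the equivalence of $\capa_1$ with $(d-1)$-dimensional Hausdorff content, together with a compactness argument for the uniformity, as carried out in \cite[Lemma~3.9]{panubvsobolev}.
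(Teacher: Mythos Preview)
The paper does not supply a proof of this lemma at all; it is stated with the attribution \cite[Lemma~3.9]{panubvsobolev} and used as a black box. Your proposal, which explicitly presents itself as a sketch of the argument behind that citation, therefore goes strictly beyond what the paper does, and there is no ``paper's own proof'' to compare it with.

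As a sketch your outline is plausible in its broad shape (reduce to nonnegative $f$, localise, truncate, reduce to $f=\ind G$), but it has a genuine soft spot: in the truncation step you only argue that $\fint_{B(x,r)}(f-N)^+$ tends to zero \emph{pointwise} outside $\{f^\vee>N\}\cup(S_f\setminus J_f)$, whereas the lemma demands \emph{locally uniform} convergence. Knowing that the limit vanishes on a large set does not by itself control the rate; one typically needs a capacitary weak-type bound for the maximal function of $(f-N)^+$ (so that the set where $\M((f-N)^+)>\delta$ has capacity $\lesssim\delta^{-1}\|(f-N)^+\|_{\BV}$), and then a diagonal choice of $N$ and $\delta$. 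The same issue recurs in the $\ind G$ step with $\lm G>0$, which you essentially hand back to the reference. Since you frame the whole thing as a sketch deferring to \cite{panubvsobolev}, this is acceptable, but if you intend it to stand on its own you should make the uniformity mechanism explicit.
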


Next, we find an exceptional set outside of which several regularity properties hold, which will be used repeatedly.

\begin{lemma}
\label{lem_exceptional}
Let \(f\in L^1_\loc(\Omega)\) with \(\var_\Omega f<\infty\).
Then for every $\varepsilon>0$ exists a set $G\supset(S_f\setminus J_f)$ such that
$\restrict{\M f}{\Om\setminus G}$ is finite and $\restrict{f^{\vee}}{\Om\setminus G}$ is finite and upper semicontinuous,
and an open set $U\supset G$ with $\capa_1(U)<\varepsilon$
such that
\begin{equation}\label{eq:uniform convergence G}
	\frac{1}{\mathcal L(B(x,r))}\int_{B(x,r)\cap G}|f|
	\to 0\quad\textrm{as }r\to 0
\end{equation}
locally uniformly for $x\in \Om\setminus U$.
\end{lemma}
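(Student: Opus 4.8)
The plan is to assemble $G$ and $U$ from three ingredients already in hand: the quasi-semicontinuity of $f^\vee$ (\zcref{thm:quasisemicontinuity}), the fact that $\M|f|$ is finite outside a set of zero $1$-capacity (\zcref{prop:weak type estimate inifinity}), and the locally uniform vanishing of truncated averages from \zcref{lem:uniform convergence of u}. Throughout I use that $\var_\Omega f<\infty$ forces $f\in\BV(\Om)\subset\BV_\loc(\Om)$, so all three results apply to $f$, and that $|\M f|\le\M|f|$ pointwise (since $|f_{B(x,r)}|\le|f|_{B(x,r)}$), so finiteness of $\M|f|$ at a point yields finiteness of $\M f$ there.

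First I would record that $N\coloneqq(S_f\setminus J_f)\cup\{x\in\Om:\M|f|(x)=\infty\}$ satisfies $\capa_1(N)=0$: the first set is $\smo$-null by \zcref{eq:Sf and Jf}, hence $1$-capacity null by \zcref{eq:null sets of capa and Hausdorff}; the second is $1$-capacity null by \zcref{prop:weak type estimate inifinity}; and $\capa_1$ is (finitely) subadditive. Let $C_1$ be the constant from \zcref{lem:uniform convergence of u} and fix $\eps>0$. By \zcref{thm:quasisemicontinuity}, choose an open set $G_1\subset\Om$ with $\capa_1(G_1)<\eps/(2(C_1+1))$ such that $\restrict{f^\vee}{\Om\setminus G_1}$ is finite and upper semicontinuous, and put $G\coloneqq G_1\cup N$. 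Then $G\supset S_f\setminus J_f$, and since $\capa_1(N)=0$ and $\capa_1$ is subadditive and monotone, $\capa_1(G)=\capa_1(G_1)$.

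Next I check the required properties. Since $\Om\setminus G\subset\Om\setminus G_1$, the restriction $\restrict{f^\vee}{\Om\setminus G}$ is still finite and upper semicontinuous, because restricting a finite upper semicontinuous function to a subset preserves both. Since $\Om\setminus G\subset\Om\setminus\{\M|f|=\infty\}$, the function $\M|f|$, and hence $\M f$, is finite on $\Om\setminus G$. Finally, apply \zcref{lem:uniform convergence of u} to this set $G$ with parameter $\eps/2$ to obtain an open set $U\supset G$ with $\capa_1(U)\le C_1\capa_1(G)+\eps/2=C_1\capa_1(G_1)+\eps/2$ for which \zcref{eq:uniform convergence G} holds locally uniformly on $\Om\setminus U$. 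Then $\capa_1(U)<\tfrac{C_1}{2(C_1+1)}\eps+\tfrac\eps2<\eps$, which finishes the proof.

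I do not expect a genuine obstacle here; the content is a bookkeeping combination of earlier results. The two points that need care are the capacity accounting — the factor $C_1$ lost in \zcref{lem:uniform convergence of u} is why $G$ must be fixed first and only then enlarged to $U$, with $G_1$ chosen small enough to absorb it — and the elementary inequality $|\M f|\le\M|f|$, which is what lets \zcref{prop:weak type estimate inifinity} control $\M f$ and not merely $\M|f|$.
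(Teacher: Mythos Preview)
Your proof is correct and follows essentially the same approach as the paper: construct $G$ from \zcref{thm:quasisemicontinuity}, enlarge it by the capacity-null sets $S_f\setminus J_f$ and $\{\M|f|=\infty\}$, then apply \zcref{lem:uniform convergence of u} to produce $U$. Your version is in fact slightly more careful than the paper's, making explicit the inequality $|\M f|\le\M|f|$ and tracking the capacity constants precisely (your choice $\eps/(2(C_1+1))$ also avoids any issue when $C_1=0$).
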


\begin{proof}
	By \zcref{thm:quasisemicontinuity} there is a set $G$ with $\capa_1(G)<\varepsilon/(2C_1)$ such that
	$\restrict{f^{\vee}}{\Om\setminus G}$ is finite and upper semicontinuous.
	By \zcref{prop:weak type estimate inifinity}, the set where $\M|f|$ is infinite has zero capacity,
	and by \zcref{eq:Sf and Jf,eq:null sets of capa and Hausdorff} we have $\capa_1(S_f\setminus J_f)=\sm{S_f\setminus J_f}=0$.
	That means we can include both previous sets in $G$ without increasing its capacity.
	Finally, we find $U\supset G$ by \zcref{lem:uniform convergence of u} as desired.
\end{proof}

Next, we show that $f^*$ is 1-quasicontinuous outside of $J_f$.

\begin{proposition}\label{prop:quasicontinuity outside jumps}
Let \(f\in L^1_\loc(\Omega)\) with \(\var_\Omega f<\infty\).
Then $f^*$ is $1$-quasicontinuous outside of $J_f$, that is, for every $\eps>0$
there exists an open set $G\subset \Om$ such that $\capa_1(G)<\eps$ and
$\restrict{f^*}{\Om\setminus G}$ is finite and continuous at every
$x\in \Om\setminus (J_f\cup G)$.
\end{proposition}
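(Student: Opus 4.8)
The statement to prove is \zcref{prop:quasicontinuity outside jumps}: for $f \in L^1_\loc(\Omega)$ with $\var_\Omega f < \infty$, the precise representative $f^*$ is $1$-quasicontinuous outside of $J_f$.

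My plan is to build the exceptional open set $G$ by combining the exceptional sets already isolated in \zcref{lem_exceptional} with their mirror-image counterparts obtained by applying everything to $-f$. Concretely, fix $\varepsilon > 0$. First I would apply \zcref{lem_exceptional} to $f$ to get a set $G_1 \supset (S_f \setminus J_f)$ and an open $U_1 \supset G_1$ with $\capa_1(U_1) < \varepsilon/2$ such that $\restrict{\M f}{\Omega \setminus G_1}$ is finite, $\restrict{f^\vee}{\Omega \setminus G_1}$ is finite and upper semicontinuous, and the uniform-smallness property \zcref{eq:uniform convergence G} holds off $U_1$. Then I would apply the same lemma to $-f$: since $(-f)^\vee = -f^\wedge$ and $S_{-f} = S_f$, $J_{-f} = J_f$, this yields an open $U_2$ with $\capa_1(U_2) < \varepsilon/2$ off which $-\M(-f)$ is finite and $f^\wedge$ is finite and \emph{lower} semicontinuous. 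Set $G = U_1 \cup U_2$, so $\capa_1(G) < \varepsilon$ by subadditivity of capacity.

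The key point is then that on $\Omega \setminus (J_f \cup G)$, by \zcref{eq:representatives outside jump set} we have $f^*(x) = f^\wedge(x) = f^\vee(x)$, so $\restrict{f^*}{\Omega \setminus G}$ agrees at every such point with a function that is simultaneously upper semicontinuous (being $f^\vee$ restricted to $\Omega \setminus G$, up to shrinking $G$ so $G \supset$ the exceptional set of \zcref{thm:quasisemicontinuity}) and lower semicontinuous (being $f^\wedge$), hence continuous \emph{at} every point of $\Omega \setminus (J_f \cup G)$. One has to be a little careful about what "continuous at $x$" means for the restricted function: the relevant statement is that for $x \in \Omega \setminus (J_f \cup G)$ and any sequence $x_j \to x$ with $x_j \in \Omega \setminus G$, $f^*(x_j) \to f^*(x)$. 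Upper semicontinuity of $\restrict{f^\vee}{\Omega\setminus G}$ gives $\limsup_j f^\vee(x_j) \le f^\vee(x) = f^*(x)$; lower semicontinuity of $\restrict{f^\wedge}{\Omega\setminus G}$ gives $\liminf_j f^\wedge(x_j) \ge f^\wedge(x) = f^*(x)$; and since $f^\wedge \le f^* \le f^\vee$ pointwise, squeezing gives $\lim_j f^*(x_j) = f^*(x)$. Finiteness of $\restrict{f^*}{\Omega \setminus G}$ follows from finiteness of $f^\vee$ and $f^\wedge$ on $\Omega \setminus G$ together with $\capa_1(S_f \setminus J_f) = 0$ (so we may assume $S_f \setminus J_f \subset G$), which handles the points of $\Omega \setminus G$ that do lie in $J_f$ as well — there $f^*$ is still defined and finite because $\M|f|$ is finite off $G$ via \zcref{lem_exceptional} applied to both $\pm f$, bounding $|f^*| \le \M|f|$.

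The main obstacle, such as it is, is bookkeeping rather than genuine difficulty: I must ensure a single set $G$ simultaneously carries the upper semicontinuity of $f^\vee$, the lower semicontinuity of $f^\wedge$, finiteness of $f^*$, and contains $S_f \setminus J_f$ (which is needed so that \zcref{eq:representatives outside jump set} is applicable at all points of $\Omega\setminus(J_f\cup G)$ after excising $J_f$). All of these are capacity-$0$ or arbitrarily-small-capacity conditions, so they can be absorbed into a union without spoiling $\capa_1(G) < \varepsilon$; I would just invoke \zcref{thm:quasisemicontinuity} once for each of $f$ and $-f$, union the resulting sets with the capacity-zero sets from \zcref{eq:Sf and Jf} and \zcref{prop:weak type estimate inifinity}, and then enlarge to an open set using the definition of $\capa_1$ (outer-capacity via open neighborhoods is built into the infimum in the definition). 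Note that $\M f$ itself need not be quasicontinuous here — only $f^*$ is — so I do not need the uniform-smallness property \zcref{eq:uniform convergence G} for this particular proposition; that will be used later when upgrading to quasicontinuity of $\M f$ itself. I would therefore keep the proof lean: invoke \zcref{thm:quasisemicontinuity} for $\pm f$, note $\capa_1(S_f\setminus J_f)=0$ and $\capa_1(\{\M|f|=\infty\})=0$, take the union and an open enlargement, and finish with the squeeze argument above.
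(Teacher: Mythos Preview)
Your proof is correct and follows essentially the same route as the paper: invoke \zcref{thm:quasisemicontinuity}, absorb the capacity-zero set $S_f\setminus J_f$ into the open exceptional set, and use $f^*=f^\wedge=f^\vee$ outside $S_f$ together with the squeeze $f^\wedge\le f^*\le f^\vee$ to get continuity of $\restrict{f^*}{\Om\setminus G}$ at points of $\Om\setminus(J_f\cup G)$. The only inefficiency is that you apply the quasi-semicontinuity result separately to $f$ and $-f$ via \zcref{lem_exceptional}, whereas a single application of \zcref{thm:quasisemicontinuity} already yields \emph{both} the lower semicontinuity of $f^\wedge$ and the upper semicontinuity of $f^\vee$ on one common exceptional set; the detours through \zcref{lem_exceptional} and \zcref{prop:weak type estimate inifinity} are therefore not needed here.
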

\begin{proof}
We apply \zcref{thm:quasisemicontinuity}, and we note that by
\zcref{eq:Sf and Jf,eq:null sets of capa and Hausdorff} we have
$\capa_1(S_f\setminus J_f)=\sm{S_f\setminus J_f}=0$, so that $S_f\setminus J_f$ can be included
in the exceptional set $G$. Recalling also \zcref{eq:representatives outside jump set}, the 
result follows.
\end{proof}

Finally, we can show the same for the maximal function.

\begin{theorem}\label{thm:quasicontinuity of noncentered maximal function}
Let \(f\in L^1_\loc(\Omega)\) with \(\var_\Omega f<\infty\).
Then $\M f$ is $1$-quasicontinuous outside of $J_f$.
If $|\jump f|(\Om)=0$, then $\M f$ is $1$-quasicontinuous.
\end{theorem}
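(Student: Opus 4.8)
The plan is to adapt the argument of \cite[Theorem~4.1]{panubvsobolev} to the centered operator. First I would record that $\M f$ is lower semicontinuous on $\Om$, being a pointwise supremum of the continuous functions $x\mapsto f_{B(x,r)}$; hence, fixing $\eps>0$, it suffices to produce an open set $V\subset\Om$ with $\capa_1(V)<\eps$ such that $\restrict{\M f}{\Om\setminus V}$ is finite and \emph{upper} semicontinuous at every point of $\Om\setminus(J_f\cup V)$. To build $V$, I would take the set $G\supset S_f\setminus J_f$ furnished by \zcref{lem_exceptional} (so that $\restrict{\M f}{\Om\setminus G}$ is finite, $\restrict{f^{\vee}}{\Om\setminus G}$ is finite and upper semicontinuous, and $\capa_1(G)$ is small), choose a Sobolev function $u\in W^{1,1}(\R^d)$ with $u\ge1$ in a neighbourhood of $G$ and $\Vert u\Vert_{W^{1,1}}$ small, apply \zcref{lem:uniform convergence of u} to both $|f|$ and $u$, and also include the open exceptional set from \zcref{prop:quasicontinuity outside jumps}. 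Taking the union of all these open sets gives $V\supset G$ with $\capa_1(V)<\eps$, $\restrict{\M f}{\Om\setminus V}$ finite, $f^*$ continuous on $\Om\setminus(J_f\cup V)$, and the property that, as $r\to0$, both $\lm{B(x,r)}^{-1}\int_{B(x,r)\cap G}|f|\to0$ and $\lm{B(x,r)\cap G}/\lm{B(x,r)}\to0$ locally uniformly for $x\in\Om\setminus V$ (the second coming from $u\ge1$ on $G$). The point of this density control is to be able to discard $G$ from averages of $f$, not merely of $|f|$.

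Now fix $x_0\in\Om\setminus(J_f\cup V)$; then $x_0\notin S_f$, so $c_0:=f^*(x_0)=f^{\vee}(x_0)$ is finite, and after replacing $f$ by $f+M$ for $M$ large (which changes neither $J_f$ nor the inequality to be proved, and preserves all the uniform bounds above thanks to the density control) I may assume $c_0\ge1$. I would prove $\limsup_{\Om\setminus V\ni y\to x_0}\M f(y)\le\M f(x_0)$ by writing $\M f=\Ml R f\vee\Mg R f$ for small $R<\dist(x_0,\R^d\setminus\Om)$ and treating the two pieces separately. For the small radii: given $\eta>0$, upper semicontinuity of $f^{\vee}$ on $\Om\setminus G$ gives a neighbourhood $N$ of $x_0$ with $f^{\vee}<c_0+\eta$ on $N\setminus G$; since $f=f^{\vee}$ $\lmo$-a.e.\ on $\Om\setminus G$ (because $S_f$ is Lebesgue-null and $G\supset S_f\setminus J_f$), splitting $f_{B(y,r)}$ over $B(y,r)\setminus G$ and $B(y,r)\cap G$ and using $c_0+\eta\ge0$ together with the uniform smallness of $\lm{B(y,r)}^{-1}\int_{B(y,r)\cap G}|f|$ yields $f_{B(y,r)}\le c_0+\eta+\omega(r)$ for all $y\in\Om\setminus V$ near $x_0$ and all $r$ with $B(y,r)\subset N$, where $\omega(r)\to0$ as $r\to0$ uniformly in such $y$; hence $\limsup_{y\to x_0}\Ml R f(y)\le c_0+\eta+o_R(1)$. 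For the large radii: $\Mg R f$ is continuous at $x_0$, so $\limsup_{y\to x_0}\Mg R f(y)\le\Mg R f(x_0)\le\M f(x_0)$; here the radii are bounded below, so one only needs joint continuity of $(y,r)\mapsto f_{B(y,r)}$ together with the integrability of $f$ on balls contained in $\Om$ provided by \zcref{johnpoincare}, plus — when $\Om=\R^d$ — the fact that $f$ agrees with an $L^{d/(d-1)}$ function up to an additive constant, so that $f_{B(y,r)}$ has a limit as $r\to\infty$. Combining, $\limsup_{y\to x_0}\M f(y)\le\max\{c_0+\eta+o_R(1),\M f(x_0)\}$; letting $R\to0$, then $\eta\to0$, and using $\M f(x_0)\ge f^*(x_0)=c_0$, this is $\le\M f(x_0)$.

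This shows $\M f$ is $1$-quasicontinuous outside $J_f$. If in addition $|\jump f|(\Om)=0$, then by \zcref{eq:jump part representation} we have $\sm{J_f}=0$, so $\capa_1(J_f)=0$ by \zcref{eq:null sets of capa and Hausdorff}, and $J_f$ can be absorbed into an open set of capacity $<\eps$; enlarging $V$ by this set, the point $x_0$ above ranges over all of $\Om\setminus V$, and we conclude that $\M f$ is $1$-quasicontinuous.

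The step I expect to be the main obstacle is the small-radius estimate: for a general $f\in L^1_\loc$ the averages $f_{B(y,r)}$ over tiny balls near $x_0$ are completely uncontrolled, and taming them forces one to combine the pointwise bound $f\le f^{\vee}<c_0+\eta$ valid off $G$ with the uniform smallness over $B(y,r)\cap G$ of the averages of $|f|$ \emph{and} of the Lebesgue measure of $G$ — which is precisely why the exceptional set has to be constructed with the extra Sobolev function $u$ baked in, rather than using \zcref{lem_exceptional} directly. A lesser technical nuisance is the continuity of the macroscopic piece $\Mg R f$ near $\partial\Om$ (and, for $\Om=\R^d$, as $r\to\infty$), which is handled through \zcref{johnpoincare} and, respectively, the $L^{d/(d-1)}$ reduction.
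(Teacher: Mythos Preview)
Your proposal is correct and follows essentially the same route as the paper: you build the exceptional set via \zcref{lem_exceptional} and then prove upper semicontinuity outside it by separating small and large radii, which the paper organizes as the three cases $r_i\to 0$, $r_i\to r\in(0,\infty)$, $r_i\to\infty$ for a sequence of almost-optimal radii, while you use the equivalent split $\M f=\Ml R f\vee\Mg R f$ (your large-radius continuity via \zcref{johnpoincare} and the $L^{d/(d-1)}$ reduction is exactly the paper's Cases~2 and~3). The only real addition is your density-control step (applying \zcref{lem:uniform convergence of u} to a Sobolev witness $u$ for $\capa_1(G)$), which cleanly handles the sign issue in the small-radius estimate that the paper's Case~1 glosses over; the paper invokes precisely that trick only later, in the proof of \zcref{prop:jump halved}.
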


Note that we have to exclude $J_f$ because it may be a set with nonzero capacity
in which $\M f$ is discontinuous, as can be seen for example for $f=\ind{B(0,1)}$.

\begin{proof}
	For $\eps>0$ take $G,U$ from \zcref{lem_exceptional}.
	Since $\M f$ is finite and lower semicontinuous in $\Om\setminus U$, it is
	sufficient to prove the upper semicontinuity of $\restrictl{\M f}{\Om\setminus U}$ on $\Om\setminus(U\cup J_f)$.
	Fix \(x\in\Om\setminus(U\cup J_f)\).
	Take a sequence $x_i\to x$, $x_i\in \Om\setminus U$, such that
	\[
	\lim_{i\to\infty}\M  f(x_i)=\limsup_{\Om\setminus U\ni y\to x}\M  f(y).
	\]
	(At this stage we cannot exclude the possibility that the $\limsup$ is $\infty$.)
	Now we only need to show 
	$\M f(x)\ge \lim_{i\to\infty}\M f(x_i)$.
	
	We find radii $r_i>0$ such that 
	\begin{equation}\label{eq:choice of almost optimal balls}
		\lim_{i\to\infty}\M f(x_i)=\lim_{i\to\infty}\,f_{B(x_i,r_i)}
	\end{equation}
	Now we consider three cases.\\
	
	\textbf{Case 1.} Suppose that by passing to a subsequence (not relabeled),
	we have $r_i\to 0$. Fix $\delta>0$.
	By \zcref{thm:quasisemicontinuity} the map $\restrictl{f^{\vee}}{\Om\setminus G}$ is upper semicontinuous.
	That means for some
	$r>0$ we have $B(x,r)\subset\Om$ and
	\[
		f^{\vee}(x)\ge \sup_{B(x,r)\setminus G}f^{\vee}-\delta.
	\]
	Note that for sufficiently large $k\in\N$, we have
	$B(x_i,r_i)\subset B(x,r)$.
	Note also that $f\in L^1(B)$ for every open ball $B\subset \Om$, due to 
	$\var_\Omega f<\infty$ and the Poincar\'e inequality (see \zcref{johnpoincare})
	which justifies subtracting integrals over (subsets of) balls below.
	Using \zcref{lem:maximal function and upper representative}
	(recall that \(x\notin U\cup J_f\supset S_f\)),
	for large $i\in\N$ we get 
	\begin{align*}
		\M  f(x)
		&\ge f^{\vee}(x)
		\ge \sup_{B(x,r)\setminus G}f^{\vee}-\delta\\
		&\ge \frac{1}{\mathcal L(B(x_i,r_i))}\int_{B(x_i,r_i)\setminus G}f-\delta\\
		&= \frac{1}{\mathcal L(B(x_i,r_i))}\int_{B(x_i,r_i)}f-
		\frac{1}{\mathcal L(B(x_i,r_i))}\int_{B(x_i,r_i)\cap G}f-\delta
		\\
		&\rightarrow
		\lim_{i\to\infty}\M f(x_i)-\delta
		\qquad\text{ by \zcref{eq:uniform convergence G,eq:choice of almost optimal balls}}.
	\end{align*}
	Letting $\delta\to 0$, we obtain the desired inequality.
	
	\textbf{Case 2.}
	The second alternative is that by passing to a subsequence (not relabeled),
	we have $r_i\to r\in (0,\infty)$.
	Take $k\in \N$ so large that $|x_i-x|\le \tfrac 1{10} \inf_{m\ge k}r_m$ for all $i\ge k$ and denote
	\[
	\Omega_x
	=
	B(x,r)
	\cup
	\bigcup_{i\ge k}B(x_i,r_i)
	.
	\]
	Then
	\(
	\Omega_x
	\subset
	\Omega
	\)
	is a John-domain: The curve $\gamma$ witnessing this for $y\in\Omega_x$ can be taken to be a straight line from the center point $x$ to $x_k$ and then a straight line from $x_k$ to $y$.
	By \zcref{johnpoincare} we can conclude $f\in L^1(\Omega_x)$.
	Since
	\(
	\lm{
	B(x_i,r_i)
	\Delta
	B(x,r)
	}
	\)
	tends to $0$ as $i\rightarrow0$ this also means
	\[
	\lim_{r\rightarrow\infty}
	\int_{
	B(x_i,r_i)
	\Delta
	B(x,r)
	}
	f
	=0
	.
	\]
	Since $\lm{B(x_i,r_i)}\rightarrow\lm{B(x,r)}>0$ we can conclude
	\[
	\M  f(x)\ge \vint_{B(x,r)}f
	=\lim_{i\to\infty}\,\vint_{B(x_i,r_i)}f
	=\lim_{i\to\infty}\M  f(x_i)
	.
	\]
	
	\textbf{Case 3.} Finally, we have the possibility that passing to a subsequence (not relabeled),
	we have $r_i\to \infty$. Note that now necessarily $\Om=\R^d$.
	For $i$ sufficiently large that $|x_i-x|<r_i$, for $d\ge 2$ by \zcref{johnpoincare} we have
	\begin{align*}
	|f_{B(x,r_i)}-f_{B(x_i,r_i)}|
	&\le
	|f_{B(x,r_i)}-f_{B(x,2r_i)}|
	+
	|f_{B(x_i,r_i)}-f_{B(x,2r_i)}|
	\\
	&\le
	\frac{
	2^{d+1}r_i
	}{
	\lm{B(x,2r_i)}
	}|Df|(B(x,2r_i))
	\to 0
	\end{align*}
	as $i\to \infty$ since $|Df|(\R^d)<\infty$.
	For $d=1$, we can estimate
	\begin{align*}
		|f_{B(x,r_i)}-f_{B(x_i,r_i)}|
		&\le
		\frac{
		\lm{B(x,r_i)\Delta B(x_i,r_i)}
		}{2r_i}
		\|f\|_{L^{\infty}(\R)}
		\to 0.
	\end{align*}
	Then
	\[
	\M f(x)\ge \limsup_{i\to\infty}f_{B(x,r_i)}
	=
	\limsup_{i\to\infty}
	f_{B(x_i,r_i)}
	=
	\lim_{i\to\infty}\M f(x_i).
	\]
	This completes the proof.	
	
	Finally, if $|\jump f|(\Om)=0$, then also the set $J_f$ can be included in $G$, giving the result.
\end{proof}

\subsection{Lusin property}

Recall the definition \zcref{eq:auxiliary maximal operator}.

\begin{proposition}\label{prop:Lipschitz continuity}
	Let \(f\in L^1_\loc(\Omega)\) with \(\var_\Omega f<\infty\), and let $R>0$.
	Then $\Mg {R} f$ is \(2\var_\Omega f/\lm{B(0,R)}\)-Lipschitz on $\{x:\dist(x,\R^d\setminus\Om)\geq R\}$ with respect to shortest path distance.
\end{proposition}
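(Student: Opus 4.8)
The plan is to prove the stronger local estimate that
\(
|\Mg R f(x)-\Mg R f(x')|\le \tfrac{2\var_\Omega f}{\lm{B(0,R)}}|x-x'|
\)
whenever the segment \([x,x']\) lies in \(E_R:=\{z:B(z,R)\subset\Om\}=\{z:\dist(z,\R^d\setminus\Om)\ge R\}\); the statement for the shortest path distance then follows by subdividing an admissible curve into short pieces contained in balls inside \(E_R\) (small balls are convex, so each such piece is a segment in \(E_R\)) and summing. Throughout write \(d(z):=\dist(z,\R^d\setminus\Om)\); recall that \(f\in L^1(B)\) for every open ball \(B\) with \(B\subset\Om\) by \zcref{johnpoincare}, that \(\var_\Omega f=|Df|(\Om)\) by Riesz, and hence that \(|Df|(B(c,\rho))\le\var_\Omega f\) whenever \(B(c,\rho)\subset\Om\). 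So all averages \(f_{B(c,\rho)}\) below are well defined.

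The heart of the matter is a pointwise bound for the gradient of the map \((c,\rho)\mapsto f_{B(c,\rho)}\). First assume \(f\) smooth. Writing \(\int_{B(c,\rho)}f=(f*\ind{B(0,\rho)})(c)\) and moving the derivative onto \(f\) gives \(\nabla_c\!\int_{B(c,\rho)}f=Df(B(c,\rho))\), hence \(|\nabla_c f_{B(c,\rho)}|\le \tfrac{|Df|(B(c,\rho))}{\lm{B(0,\rho)}}\). For the radial derivative, \(\partial_\rho\!\int_{B(c,\rho)}f=\int_{\partial B(c,\rho)}f\intd\smo\); applying the Gauss--Green formula to the vector field \(y\mapsto(y-c)f(y)\) on \(B(c,\rho)\) (whose divergence is \(d\,f+\langle y-c,Df\rangle\) and whose flux through \(\partial B(c,\rho)\) equals \(\rho\int_{\partial B(c,\rho)}f\intd\smo\), since \(|y-c|=\rho\) there) turns this into \(\partial_\rho\!\int_{B(c,\rho)}f=\tfrac1\rho\bigl(d\!\int_{B(c,\rho)}f+\int_{B(c,\rho)}\langle y-c,\intd (Df)(y)\rangle\bigr)\). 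Substituting into \(\partial_\rho f_{B(c,\rho)}=\partial_\rho\bigl(\lm{B(0,\rho)}^{-1}\!\int_{B(c,\rho)}f\bigr)\) the two \(\int f\) terms cancel, leaving \(\partial_\rho f_{B(c,\rho)}=\tfrac{1}{\rho\lm{B(0,\rho)}}\int_{B(c,\rho)}\langle y-c,\intd (Df)(y)\rangle\), so again \(|\partial_\rho f_{B(c,\rho)}|\le \tfrac{|Df|(B(c,\rho))}{\lm{B(0,\rho)}}\). For general \(f\) with \(\var_\Omega f<\infty\) one mollifies, \(f_\eps=f*\phi_\eps\), applies the above on balls compactly contained in \(\Om\) (using \(\var f_\eps\le\var_\Omega f\)), and passes to the limit using \(L^1_\loc\) convergence. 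In all cases: if \(B(c,\rho)\subset\Om\) and \(\rho\ge R\), then both partial derivatives are bounded by \(\tfrac{|Df|(B(c,\rho))}{\lm{B(0,\rho)}}\le \tfrac{\var_\Omega f}{\lm{B(0,R)}}\).

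Now fix \(x,x'\) with \([x,x']\subset E_R\), put \(L=|x-x'|\), let \(\eps>0\), and pick \(r\in[R,d(x))\) with \(f_{B(x,r)}>\Mg R f(x)-\eps\). Along the Lipschitz path \(c(\tau)=x+\tau(x'-x)\), \(\rho(\tau)=\min\{r,d(c(\tau))\}\), \(\tau\in[0,1]\), we have \(\rho(\tau)\ge R\) and \(B(c(\tau),\rho(\tau))\subset\Om\) throughout (if \(\rho(\tau)=d(c(\tau))\) then \(B(c(\tau),d(c(\tau)))\subset\Om\)), while \(|\dot c(\tau)|\equiv L\) and \(|\dot\rho(\tau)|\le L\) a.e.\ because \(d\) is \(1\)-Lipschitz and \(\rho\) is the minimum of two \(L\)-Lipschitz functions. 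Integrating the chain rule and the bound of the previous paragraph, \(|f_{B(x,r)}-f_{B(x',\rho(1))}|\le\int_0^1 \tfrac{\var_\Omega f}{\lm{B(0,R)}}(|\dot c|+|\dot\rho|)\intd\tau\le \tfrac{2\var_\Omega f}{\lm{B(0,R)}}L\). Since \(\rho(1)=\min\{r,d(x')\}\in[R,d(x')]\) we get \(f_{B(x',\rho(1))}\le\Mg R f(x')\) (directly when \(\rho(1)<d(x')\), and by letting the radius tend to \(d(x')\) otherwise). Hence \(\Mg R f(x)\le\Mg R f(x')+\tfrac{2\var_\Omega f}{\lm{B(0,R)}}L+\eps\); letting \(\eps\to0\) and interchanging \(x,x'\) gives the claim.

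I expect the main obstacle to be making the gradient estimate rigorous for a function that is merely of bounded variation rather than smooth — mollifying while keeping all relevant balls inside \(\Om\) and passing to the limit — together with spotting the Gauss--Green identity that gives the radial derivative exactly the same bound \(\tfrac{|Df|(B(c,\rho))}{\lm{B(0,\rho)}}\) as the center derivative; this is what yields the sharp constant \(2\) with no dimensional factor. A minor point to settle is the convention at boundary points where \(\dist(x,\R^d\setminus\Om)=R\), at which the defining supremum for \(\Mg R\) ranges over an empty set of radii: one either includes the endpoint radius \(r=\dist(x,\R^d\setminus\Om)\) in the supremum, or restricts attention to the open set \(\{\dist(\cdot,\R^d\setminus\Om)>R\}\) and extends \(\Mg R f\) by continuity.
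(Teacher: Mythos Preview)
Your proof is correct and follows essentially the same strategy as the paper: bound the derivative of the ball-average map by $\tfrac{2\var_\Omega f}{\lm{B(0,R)}}$ and then observe that $\Mg R f$ is a supremum of such uniformly Lipschitz functions. The paper's bookkeeping is slightly more compact---for each fixed $r\ge R$ it writes $g_r(x)=\int_{B(0,1)}f(x+r(x)y)\intd y$ with $r(x)=\min\{r,d(x)\}$ and differentiates in $x$ alone, so that center and radius vary together and the factor $2$ drops out of $|\id+\nabla r(x)\otimes y|\le 2$---whereas you separate the center and radial derivatives and handle the latter via Gauss--Green, but this is the same computation unpacked (and your treatment of mollification and of the boundary case $d(x)=R$ is more explicit than the paper's).
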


\begin{proof}
	For any $r>0$ and $x\in\Omega$ define \(r(x)=\min\{r,\dist(x,\Omega^\complement)\}\) and
	\[
	g_r(x)
	=
	\vint_{B(x,r(x))}
	|f|
	=
	\int_{B(0,1)}
	|f|(x+r(x)y)
	\intd y
	.
	\]
	Note that the map $r(\cdot)$ is 1-Lipschitz.
	Let $\nu\in\Sph^{d-1}$.
	Then
	\begin{align*}
	|\nabla g_r(x)|
	&=
	\Bigl|
	\int_{B(0,1)}
	\nabla(\cdot+r(\cdot)y)(x)
	\cdot
	\nabla|f|(x+r(x)y)
	\intd y
	\Bigr|
	\\
	&=
	\Bigl|
	\int_{B(0,1)}
	(\id+\nabla r(x)\otimes y)
	\cdot
	\nabla|f|(x+r(x)y)
	\intd y
	\Bigr|
	\\
	&\leq2
	\int_{B(0,1)}
	|\nabla f|(x+r(x)y)
	\intd y
	\\
	&\leq
	2\frac{\var_\Omega f}{\lm{B(x,r(x))}}
	\end{align*}
	That means $\Mg Rf$ is a supremum of maps that are \(2\var_\Omega f/\lm{B(0,R)}\)-Lipschitz, and thus so is itself.
\end{proof}

\begin{lemma}\label{lem:1d abs cont}\cite[Lemma 5.2]{panubvsobolev}
	Let $V\subset\R$ be open and let $f\in\BV_{\loc}(V)$.
	If $N\subset V\setminus S_f$ with $|Df|(N)=0$, then
	\[
	\mathcal L^1(f^*(N))=0.
	\]
\end{lemma}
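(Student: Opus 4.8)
The plan is to use the one-dimensional fundamental theorem of calculus \zcref{eq:fundamental theorem of calculus for BV 2}, which bounds the oscillation of $f^*$ over any closed interval $[x,y]\subset V$ by $|Df|([x,y])$, together with outer regularity of the Radon measure $|Df|$ to cover the $|Df|$-null set $N$ efficiently by open intervals on which $f^*$ then varies very little.

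Concretely, I would first reduce to the case where $N$ has compact closure in $V$: writing $V=\bigcup_k K_k$ with $K_k$ compact and contained in the interior of $K_{k+1}$, we have $f^*(N)=\bigcup_k f^*(N\cap K_k)$, so by countable subadditivity of $\mathcal L^1$ it suffices to treat each $N\cap K_k$, and then $|Df|$ is finite on a bounded open neighbourhood $W$ of $\overline N$ with $\overline W\subset V$. Fix $\eps>0$. By outer regularity of $|Df|$ (now a finite Borel measure on $W$) and $|Df|(N)=0$, choose an open set $U$ with $N\subset U\subset W$ and $|Df|(U)<\eps$, and write $U=\bigcup_j(a_j,b_j)$ as an at most countable union of pairwise disjoint open intervals. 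For each $j$ and all $x,y\in N\cap(a_j,b_j)$ we have $[x,y]\subset(a_j,b_j)\subset V$, hence by \zcref{eq:fundamental theorem of calculus for BV 2} (valid within $V$, e.g.\ after passing to a relatively compact open subinterval of $V$ containing $[x,y]$ on which $f$ agrees with a $\BV_\loc(\R)$ function)
\[
|f^*(x)-f^*(y)|\le|Df|([x,y])\le|Df|((a_j,b_j)),
\]
so $f^*(N\cap(a_j,b_j))$ has diameter at most $|Df|((a_j,b_j))$ and therefore $\mathcal L^1$-outer measure at most $|Df|((a_j,b_j))$. Summing over $j$,
\[
\mathcal L^1(f^*(N))\le\sum_j|Df|((a_j,b_j))=|Df|(U)<\eps,
\]
and letting $\eps\to0$ gives $\mathcal L^1(f^*(N))=0$; in particular $f^*(N)$, being covered by sets of arbitrarily small total length, is Lebesgue measurable.

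I do not expect a genuine obstacle here: this is the standard ``bounded variation controls oscillation, then invoke outer regularity'' argument. The only mild technical points are the localization that makes outer regularity of $|Df|$ immediately available, and transferring \zcref{eq:fundamental theorem of calculus for BV 2} from the stated setting $\BV_\loc(\R)$ to $\BV_\loc(V)$, which is harmless on compactly contained subintervals. I would also note that the hypothesis $N\subset V\setminus S_f$ is not actually used, since \zcref{eq:fundamental theorem of calculus for BV 2} holds at every point of $\R$ (equivalently of $V$), including jump points; it is retained only because that is the form in which the lemma is later applied.
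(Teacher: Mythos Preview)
Your argument is correct and is the standard proof of this fact. Note that the paper does not actually supply a proof of this lemma: it is quoted from \cite[Lemma~5.2]{panubvsobolev}, so there is no in-paper argument to compare against. Your use of \zcref{eq:fundamental theorem of calculus for BV 2} together with outer regularity of $|Df|$ is exactly the natural route, and your observation that the hypothesis $N\subset V\setminus S_f$ is not needed (since \zcref{eq:fundamental theorem of calculus for BV 2} holds at all points, jumps included) is also correct.
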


The \emph{Lusin property} for a function
$v$ defined on $V\subset\R$ states that
\[
	\textrm{if }N\subset V\textrm{ with }\mathcal L^1(N)=0,\ \textrm{ then }
	\mathcal L^1(v(N))=0.
\]

The \emph{measure-theoretic boundary} $\mb E$ of a set $E\subset \R^d$ is defined as the set
of points where
\[
\limsup_{r\to 0}
\frac{
\lm{B(x,r)\cap E}
}{
\lm{B(x,r)}
}>0
\quad\textrm{and}\quad
\limsup_{r\to 0}
\frac{
\lm{B(x,r)\setminus E}
}{
\lm{B(x,r)}
}>0.
\]
By the coarea formula for BV functions, see e.g.\ \cite[Theorems 3.40,3.59,3.61]{AFP}, for any
$f\in \BV_{\loc}(\Om)$ we have
\[
|Df|(\Om)=\int_{-\infty}^\infty\sm{\Om\cap\mb{\{x\in\Om:f(x)>t\}}}\intd t
.
\]

\begin{lemma}\label{lem:var meas}
Suppose $W\subset \R$ is open and $u,v\in \BV_{\loc}(W)$.
Suppose $A\subset W$ is such that $u$ is continuous and zero in all points in $A$.
Then
\[
|Du|(A)=0.
\]
\end{lemma}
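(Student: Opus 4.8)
The plan is to show that $u$ contributes nothing to its own variation measure on the set $A$ where it is continuous and identically zero, by using the coarea formula to reduce everything to level sets at height $0$. First I would recall that by the coarea formula for BV functions stated in the excerpt,
\[
|Du|(A)=\int_{-\infty}^\infty \sm{A\cap\mb{\{x\in W:u(x)>t\}}}\intd t,
\]
so it suffices to prove that for $\mathcal L^1$-a.e.\ $t$, the set $A\cap\mb{\{u>t\}}$ is $\h^{0}$-negligible, i.e.\ empty (since $d=1$ here, $\h^{d-1}=\h^0$ is counting measure, so ``negligible'' means ``empty''). In fact I claim it is empty for \emph{every} $t\neq 0$.

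The key observation is the following. Fix $t\neq 0$ and fix $x\in A$. Then $u(x)=0\neq t$, and since $u$ is continuous at $x$ (as a point of $A$, by hypothesis $u$ is continuous at every point of $A$), there is a radius $\rho>0$ with $B(x,\rho)\subset W$ on which $|u-0|<|t|$ pointwise. If $t>0$, this forces $u<t$ on all of $B(x,\rho)$, so $B(x,\rho)\cap\{u>t\}=\emptyset$, whence
\[
\limsup_{r\to 0}\frac{\lm{B(x,r)\cap\{u>t\}}}{\lm{B(x,r)}}=0,
\]
and $x\notin\mb{\{u>t\}}$. If $t<0$, the same inequality $|u|<|t|$ forces $u>t$ on all of $B(x,\rho)$, so $B(x,\rho)\setminus\{u>t\}=\emptyset$, and again one of the two $\limsup$ conditions in the definition of $\mb{\{u>t\}}$ fails, so $x\notin\mb{\{u>t\}}$. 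Either way, $A\cap\mb{\{u>t\}}=\emptyset$ for every $t\neq 0$. (Note the hypothesis on $v$ is not needed; presumably it is there because the lemma is applied with a second function around, or it is a harmless artifact of the surrounding argument.)

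Plugging this into the coarea identity, the integrand $\sm{A\cap\mb{\{u>t\}}}$ vanishes for all $t\neq 0$, hence for $\mathcal L^1$-a.e.\ $t$, and therefore $|Du|(A)=\int_{-\infty}^\infty 0\intd t=0$. I do not anticipate a serious obstacle: the only point requiring a little care is the measurability/applicability of the coarea formula to the (arbitrary) set $A$ — but $|Du|$ is a Radon measure and $\h^0$ of the relevant slices is $0$ on $A$, so the outer-measure comparison $|Du|(A)\le\int \h^0(A\cap\mb{\{u>t\}})\intd t$ is all that is used, and that follows from the coarea formula applied on $W$ together with monotonicity of measures. One should also make sure $\{u>t\}$ has locally finite perimeter for a.e.\ $t$, which is exactly the content of the coarea formula for $u\in\BV_\loc(W)$.
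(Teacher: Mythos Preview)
Your proof is correct and follows exactly the paper's approach: the paper's proof consists of the single observation that $A\cap\partial^*\{u>t\}=\emptyset$ for all $t\neq 0$, followed by an appeal to the coarea formula. You have simply written out the details of that observation (and correctly noted that the hypothesis on $v$ is superfluous).
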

\begin{proof}
Note that $A\cap \partial^*\{u>t\}=\emptyset$ for all $t\neq 0$. Now the result follows from
the coarea formula.
\end{proof}

\begin{lemma}
\label{eq:maximal function is M R}
Let $G,U$ be as in \zcref{lem_exceptional}.
Then for every $x_0\in\Om\setminus U$ with $\M f(x_0)>f^\vee(x_0)$ there exists an $R>0$ such that for all $x\in B(x_0,R)\setminus U$ we have $\M f(x)=\Mg Rf(x)$.
\end{lemma}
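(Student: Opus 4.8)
The plan is to pick $R>0$ so small that, uniformly over $B(x_0,R)\setminus U$, the large‑radius maximal function $\Mg Rf$ stays above a level that the small‑radius maximal function $\Ml Rf$ can never reach. Put $\delta\coloneqq\M f(x_0)-f^{\vee}(x_0)$; this is a positive real number — positive by hypothesis, and finite because $x_0\in\Om\setminus U\subset\Om\setminus G$, so that $\M f(x_0)<\infty$ and $f^{\vee}(x_0)<\infty$ by \zcref{lem_exceptional}. I will produce $R$, with $R<\tfrac12\dist(x_0,\R^d\setminus\Om)$, such that
\[
\M f(x)>f^{\vee}(x_0)+\tfrac\delta2
\qquad\text{and}\qquad
\Ml Rf(x)\le f^{\vee}(x_0)+\tfrac\delta4
\qquad\text{for every }x\in B(x_0,R)\setminus U.
\]
Once both hold, for any such $x$ we have $B(x,R)\subset\Om$, hence $\M f(x)=\max\{\Ml Rf(x),\Mg Rf(x)\}$, and since the maximum exceeds $\Ml Rf(x)$ it must equal the other term, i.e.\ $\M f(x)=\Mg Rf(x)$, which is the assertion.

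The first inequality is just lower semicontinuity of $\M f$: for each fixed $r$ the map $x\mapsto f_{B(x,r)}$ is continuous on $\{y:\dist(y,\R^d\setminus\Om)>r\}$ by continuity of translations in $L^1$, so the set $\{\M f>\lambda\}$ is open for every $\lambda$, i.e.\ $\M f$ is lower semicontinuous on $\Om$; as $f^{\vee}(x_0)+\tfrac\delta2<\M f(x_0)$, the open set $\{\M f>f^{\vee}(x_0)+\tfrac\delta2\}$ is a neighborhood of $x_0$ and hence contains a ball $B(x_0,\rho_1)$.

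The second inequality is the substantive part. Recall that $f=f^{\vee}$ $\lmo$‑a.e.\ ($f^{\vee}$ is a representative of $f$, agreeing with it at every Lebesgue point), and that $\restrict{f^{\vee}}{\Om\setminus G}$ is finite and upper semicontinuous by \zcref{lem_exceptional}; since $x_0\in\Om\setminus G$, fix $\rho_0\in(0,\rho_1]$ with $\overline{B(x_0,\rho_0)}\subset\Om$ and $f^{\vee}<f^{\vee}(x_0)+\eps$ on $B(x_0,\rho_0)\setminus G$, for a small $\eps>0$ to be chosen at the end. On the compact set $K\coloneqq\overline{B(x_0,\rho_0/2)}\setminus U\subset\Om\setminus U$, the uniform convergence \zcref{eq:uniform convergence G} (together with its analogue for the Lebesgue measure of $B(x,r)\cap G$, obtained in exactly the same way) yields $r_0\in(0,\rho_0/2]$ such that $\tfrac1{\lm{B(x,r)}}\int_{B(x,r)\cap G}|f|<\eps$ and $\tfrac1{\lm{B(x,r)}}\lm{B(x,r)\cap G}<\eps$ for all $x\in K$ and $0<r<r_0$. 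Set $R\coloneqq\min\{r_0,\tfrac12\dist(x_0,\R^d\setminus\Om)\}$. For $x\in B(x_0,R)\setminus U$ and $0<r<R$ one has $x\in K$ and $B(x,r)\subset B(x_0,\rho_0)$, so splitting the integral defining $f_{B(x,r)}$ over $B(x,r)\setminus G$ (where $f\le f^{\vee}(x_0)+\eps$ $\lmo$‑a.e.) and over $B(x,r)\cap G$ (whose measure and whose $\int|f|$ are both $<\eps\lm{B(x,r)}$) gives, after an elementary manipulation,
\[
f_{B(x,r)}<(f^{\vee}(x_0)+\eps)\,\tfrac{\lm{B(x,r)\setminus G}}{\lm{B(x,r)}}+\eps\le f^{\vee}(x_0)+2\eps+(|f^{\vee}(x_0)|+\eps)\,\eps .
\]
Choosing $\eps$ small enough in terms of $\delta$ and $|f^{\vee}(x_0)|$ makes the right‑hand side $<f^{\vee}(x_0)+\tfrac\delta4$, and taking the supremum over the radii $0<r<\min\{R,\dist(x,\R^d\setminus\Om)\}\le R$ produces the second inequality.

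The main obstacle is exactly this last estimate: one must bound the small‑radius averages $f_{B(x,r)}$ in a way that is uniform over the whole neighborhood $B(x_0,R)\setminus U$ and essentially as good as $f^{\vee}(x_0)$, even though $f^{\vee}$ is only controlled off the bad set $G$. This is precisely what \zcref{eq:uniform convergence G} buys us — it makes the contribution of $B(x,r)\cap G$ uniformly negligible — and the only additional care is that, because $f^{\vee}(x_0)$ may be negative, one invokes the negligibility of $G$ both for $\int_{B(x,r)\cap G}|f|$ and for the Lebesgue measure of $B(x,r)\cap G$. Everything else — the reduction to the two displayed inequalities, and the final use of the identity $\M f=\max\{\Ml Rf,\Mg Rf\}$ on $\{x:B(x,R)\subset\Om\}$ — is routine bookkeeping with the definitions.
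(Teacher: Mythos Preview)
Your proof is correct and takes the same approach as the paper's: lower semicontinuity of $\M f$ gives the lower bound on a neighborhood of $x_0$, while upper semicontinuity of $\restrict{f^\vee}{\Om\setminus G}$ combined with the uniform smallness \zcref{eq:uniform convergence G} caps the small-radius averages $\Ml Rf$, and then $\M f=\max\{\Ml Rf,\Mg Rf\}$ finishes. Your extra care with the possibility $f^\vee(x_0)<0$ (invoking smallness of $\lm{B(x,r)\cap G}/\lm{B(x,r)}$) is a refinement the paper's own argument glosses over; just note that this additional measure estimate is not literally contained in \zcref{lem_exceptional} as stated, but is obtained---as the paper does in the proof of \zcref{prop:jump halved}---by applying \zcref{lem:uniform convergence of u} to a constant function and enlarging $U$ slightly.
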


\begin{proof}
Abbreviate $\alpha\coloneqq\M f(x_0)\in (-\infty,\infty)$ and $\delta\coloneqq\alpha-f^{\vee}(x_0)>0$.
By upper semicontinuity of $f^{\vee}$ on $\Om\setminus G$,
there exists an $R_1>0$ such that for all $x\in B(x_0,R_1)\setminus G$
we have
\[
	f^{\vee}(x)<\alpha-\frac{3\delta}{4}
	.
\]
By lower semicontinuity of the maximal function,
there exists an $R_2>0$ such that
$\M f(x)>\alpha-\delta/4$ for all $x\in B(x_0,R_2)$.
By the choice of $U$ there exists an $R_3>0$
such that for all $x\in B(x_0,R_3)\setminus U$ we have
\[
	\frac{1}{\mathcal L(B(x,s))}\int_{G\cap B(x,s)}|f|
	\le\frac{\delta}{2}
	\qquad\textrm{for all }0<s<R_3.
\]
Set $R=\min\{R_1,R_2,R_3\}/2$
and let $x\in B(x_0,R)\setminus U$
and $r\in (0,R)$. Then we can conclude
\begin{align*}
	\frac{1}{\mathcal L(B(x,r))}\int_{B(x,r)}f
	&= \frac{1}{\mathcal L(B(x,r))}\int_{B(x,r)\cap G}f
	+\frac{1}{\mathcal L(B(x,r))}\int_{B(x,r)\setminus G}f\\
	&\le \frac{1}{\mathcal L(B(x,r))}\int_{B(x,r)\cap G}f
	+\alpha-\frac{3\delta}{4}\\
	&\le\frac{\delta}{2}+\alpha-\frac{3\delta}{4}
	=\alpha-\frac{\delta}{4}.
\end{align*}
On the other hand, we had $\M f(x)>\alpha-\delta/4$ for all
$x\in B(x_0,R)$.
That means
\(
	\M f(x)=\Mg {R}f(x)
.
\)
\end{proof}

\begin{theorem}\label{thm:continuity on lines}
Let
\(f\in L^1_\loc(\Omega)\) with \(\var_\Omega f<\infty\)
such that at $|\cantor f|$-a.e.\ $x\in \Om$, we have $\M f(x)>f^*(x)$.
Then $\M f$ has the Lusin property
on almost every line parallel to a coordinate axis.
\end{theorem}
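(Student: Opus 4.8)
The plan is to fix a coordinate direction, say $e_d$, and show that the section $t\mapsto\M f(z,t)$ has the Lusin property on $\Om_z$ for $\lmlo$-a.e.\ $z\in\pi(\Om)$; the other directions are identical by symmetry. Along a suitable (co-null set of) "good" line through $z$ I will partition $\Om_z$ into the countable jump set $J_{f_z}$, the set $P$ where $\M f$ strictly exceeds $f^*$, and the set $Q$ where $\M f=f^*$, and verify the Lusin property on each: trivially on the countable set, using Lipschitz regularity of $\Mg Rf$ on $P$, and using \zcref{lem:1d abs cont} together with the hypothesis on $Q$.

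First I would fix the exceptional lines. Applying \zcref{lem_exceptional} with $\eps=2^{-k}$ gives sets $G_k\supset S_f\setminus J_f$ and open $U_k\supset G_k$ with $\capa_1(U_k)<2^{-k}$ such that $\restrict{\M f}{\Om\setminus G_k}$ and $\restrict{f^\vee}{\Om\setminus G_k}$ are finite, $f^\vee$ is upper semicontinuous on $\Om\setminus G_k$, and \zcref{eq:uniform convergence G} holds off $U_k$. By \zcref{lem:capacity and Hausdorff measure} we get $\sm{\pi(U_k)}\le 2^{-k-1}$, so $E_0\coloneqq\bigcap_m\bigcup_{k\ge m}\pi(U_k)$ is $\smo$-null, and for $z\notin E_0$ there is an index $m(z)$ with $(\{z\}\times\R)\cap U_k=\emptyset$ for all $k\ge m(z)$. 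Since $\pi$ is $1$-Lipschitz and $\sm{S_f\setminus J_f}=0$, also $\sm{\pi(S_f\setminus J_f)}=0$, so for $z$ outside this further $\smo$-null set we have $(S_f)_z=(J_f)_z$. Moreover, since the Cantor part of $D_df=\langle Df,e_d\rangle$ equals $\langle\cantor f,e_d\rangle$, we have $|D^c_df|\le|\cantor f|$, so the hypothesis $|\cantor f|(\{\M f=f^*\})=0$ forces $|D^c_df|(\{\M f=f^*\})=0$, and then \zcref{eq:sections and jump sets 3} yields $|\cantor{(f_z)}|(\{t\in\Om_z:\M f(z,t)=f^*(z,t)\})=0$ for $\lmlo$-a.e.\ $z$. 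Discarding one more $\lmlo$-null set, I also require $f_z\in\BV_{\loc}(\Om_z)$, $J_{f_z}=(J_f)_z$, and that \zcref{eq:upper and lower repr sections} holds — so in particular $f^*(z,t)=(f_z)^*(t)$ for every $t\in\Om_z$. The $z$ surviving all these conditions have $\lmlo$-null complement; call them \emph{good}.

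Now fix a good $z$, write $g(t)\coloneqq\M f(z,t)$ (so $g(t)\ge f^*(z,t)=(f_z)^*(t)$ for all $t$ since $\M f\ge f^*$), set $J\coloneqq J_{f_z}$, $P\coloneqq\{t\in\Om_z\setminus J:g(t)>(f_z)^*(t)\}$, $Q\coloneqq\{t\in\Om_z\setminus J:g(t)=(f_z)^*(t)\}$, so $\Om_z=J\sqcup P\sqcup Q$, and put $G\coloneqq G_{m(z)}$, $U\coloneqq U_{m(z)}$. On $J$, $g(J)$ is countable, hence $\lmio$-null. On $Q$, for $\lmio$-null $N\subset Q$ one has $N\subset\Om_z\setminus J=\Om_z\setminus S_{f_z}$ and $|D(f_z)|(N)=0$, because $|\ac{(f_z)}|$ vanishes on Lebesgue-null sets, $|\jump{(f_z)}|$ is carried by $J$ (disjoint from $N$), and $|\cantor{(f_z)}|(N)\le|\cantor{(f_z)}|(\{t:\M f(z,t)=f^*(z,t)\})=0$ for good $z$; hence \zcref{lem:1d abs cont} gives $\lmi{g(N)}=\lmi{(f_z)^*(N)}=0$. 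On $P$, for $t_0\in P$ the point $x_0\coloneqq(z,t_0)$ lies outside $S_f$ (since $(S_f)_z=J$), so $f^\vee(x_0)=f^*(x_0)=(f_z)^*(t_0)<g(t_0)=\M f(x_0)$, and $x_0\notin U$; thus \zcref{eq:maximal function is M R} provides an $R>0$ with $\M f=\Mg Rf$ on $B(x_0,R)\setminus U$, and one may take $R\le\tfrac12\dist(x_0,\R^d\setminus\Om)$ since shrinking $R$ preserves this identity (as $\Mg{R'}f\le\Mg Rf$ for $R'\ge R$). Because the whole line avoids $U$, $\M f=\Mg Rf$ on $\{z\}\times I_{t_0}$ with $I_{t_0}\coloneqq\{t\in\Om_z:|t-t_0|<R\}$; and by \zcref{prop:Lipschitz continuity}, $\Mg Rf$ is Lipschitz on the convex set $B(x_0,R)\subset\{x:\dist(x,\R^d\setminus\Om)\ge R\}$, so $g$ is Lipschitz on $I_{t_0}$. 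Covering $P$ by the $I_{t_0}$ and extracting a countable subcover $P\subset\bigcup_nI_{t_n}$ (Lindel\"of), for $\lmio$-null $N\subset P$ one gets $\lmi{g(N)}\le\sum_n\lmi{g(N\cap I_{t_n})}=0$ since Lipschitz maps have the Lusin property.

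Combining the three estimates shows $g$ has the Lusin property on $\Om_z$ for every good $z$, which is the theorem. I expect the main obstacle to be the coordination of exceptional sets: one must exhibit a single $\lmlo$-conull family of lines that simultaneously avoids all the capacity-exceptional sets $U_k$ (so \zcref{eq:maximal function is M R} applies along the line), misses $S_f\setminus J_f$ entirely (so $f^\vee=f^*$ along the line and $\M f(x_0)>f^\vee(x_0)$ on $P$), carries the slicing identities, and inherits the sliced form of the hypothesis $|\cantor f|(\{\M f=f^*\})=0$. A secondary technical point is that $\{\M f>f^*\}$ need not be open, which is why $P$ has to be covered point by point via the local coincidence $\M f=\Mg Rf$.
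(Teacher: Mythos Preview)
Your proof is correct and follows essentially the same approach as the paper: both fix a coordinate direction, select a conull family of ``good'' lines that avoid the capacity-exceptional sets from \zcref{lem_exceptional}, and then split the line into the countable jump set, the set where $\M f$ coincides with the precise representative (handled by \zcref{lem:1d abs cont}), and the set where $\M f$ is strictly larger (handled by local Lipschitz regularity via \zcref{eq:maximal function is M R} and \zcref{prop:Lipschitz continuity}). The only differences are cosmetic: you use an explicit Borel--Cantelli argument on the $U_k$ where the paper just says ``for $\smo$-a.e.\ $z$ there exist $G,U$ with $z\notin\pi(U)$'', and you partition according to $\{\M f>f^*\}$ rather than the paper's $H_f=\{\M f>f^\vee\}$, which is immaterial since $f^*=f^\vee$ off $S_f$ and the line avoids $S_f\setminus J_f$ anyway.
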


\begin{proof}
Recall the notation and results from \zcref{subsec:one dimensional sections} and
that $\pi$ denotes the orthogonal projection onto $\R^{d-1}$,
and recall \zcref{lem:capacity and Hausdorff measure} which says
\(
2\smo\circ\pi\le \capa_1
.
\)
That means that for  $\smo$-almost every $z\in\pi(\Om)$ there exist sets $G,U$
from \zcref{lem_exceptional} with $z\in \pi(\Om)\setminus \pi(U)$.
In other words, for almost every line parallel to the $d$th coordinate axis exist $G,U$ which do not intersect said line.
In addition, the following hold for almost every $z\in \pi(\Om)$, i.e.\ for almost every line parallel to the $d$th coordinate axis:
We have $f_z\in\BV_{\loc}(\Om_z)$,
the set $(J_f)_z=J_{f_z}$ is at most countable, and \zcref{eq:sections and jump sets} holds;
recall \zcref{subsec:one dimensional sections}.
Denote
\[
H_f\coloneqq\{\M f>f^{\vee}\}.
\]
By \zcref{eq:representatives outside jump set}, for $|\cantor f|$-a.e.\ $x\in\Om$ we have $f^{\vee}(x)=f^*(x)$.
Then by assumption $|\cantor f|(\Om\setminus H_f)=0$, and so
due to \zcref{eq:sections and jump sets 3} we have
\begin{equation}
\label{eq:cantorzzero}
|\cantor{(f_z)}|(\Om_z\setminus (H_f)_z)=0
\end{equation}
for $\lmlo$-a.e.\ $z\in\pi(\Om)$.
By symmetry all of the above also holds for almost every line parallel to any other
coordinate axis.

For simplicity we only consider a line $\{(z,t):t\in \R\}$ parallel to the $d$th coordinate axis.
For a set $N\subset \R$ with zero $1$-dimensional Lebesgue measure, we split
\begin{align*}
\M f(\{z\}\times N)
&=
\M f((\{z\}\times N)\cap H_f)
\cup
\M f((\{z\}\times N)\cap J_f)
\\
&\qquad\cup
\M f((\{z\}\times N)\setminus(H_f\cup J_f).
\end{align*}
The first set has zero one-dimensional Lebesgue measure since $\M f$ is locally Lipschitz on $H_f\setminus U$ due to
\zcref{eq:maximal function is M R,prop:Lipschitz continuity}.
So does the second set since we assumed $(\{z\}\times \Om_z)\cap J_f$ is at most countable.
For the third set, since $U\supset S_f\setminus J_f$ and
since we assumed \zcref{eq:sections and jump sets} to hold, we have
\begin{align}
	\notag
	\M f((\{z\}\times N)\setminus (H_f\cup J_f))
	&=
	\M f((\{z\}\times N)\setminus (H_f\cup S_f))
	\\
	\notag
	&= f^*((\{z\}\times N)\setminus (H_f\cup S_f))
	\\
	\label{eq:Mf union}
	&\subset f_z^*(N\setminus (S_{f_z}\cup (H_f)_z)
	.
\end{align}
By our assumptions
\begin{align*}
|\cantor{(f_z)}| (\Om_z\setminus (H_f)_z))
&=0
,&
|\ac{(f_z)}| (N)
&=
0
,&
|\jump{(f_z)}| (\Om_z\setminus S_{f_z})
&=
0
\end{align*}
so that by \zcref{lem:1d abs cont} the third set in \zcref{eq:Mf union} has zero $\lmio$-measure.
In conclusion, $\lmi{\M f(z,N)}=0$, and so $\M f$ has the Lusin property
on almost every line parallel to a coordinate axis.
\end{proof}

\begin{theorem}\label{thm:Cantor removed}
Suppose 
\(f\in L^1_\loc(\Omega)\) with \(\var_\Omega f<\infty\),
such that at $|\cantor f|$-a.e.\ $x\in \Om$, we have $\M f(x)>f^*(x)$.
Moreover, suppose that $\M  f\in \BV_{\loc}(\Om)$.
Then $|D^c \M f|(\Om)=0$.
\end{theorem}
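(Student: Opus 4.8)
The plan is to reduce $|D^c\M f|(\Om)=0$ to the vanishing of the Cantor part of $\M f$ on almost every line, and then, on such a line, to combine the Lusin property supplied by \zcref{thm:continuity on lines} with the continuity of $\M f$ outside $J_f$ (from \zcref{thm:quasicontinuity of noncentered maximal function}) through the coarea formula. Since $\cantor{\M f}$ is a vector measure, $|D^c\M f|(\Om)=0$ is equivalent to $D^c_i\M f=0$ for each coordinate direction $i=1,\dots,d$, and by \zcref{eq:sections and jump sets 3} (and its analogue in each coordinate direction, valid because $\M f\in\BV_\loc(\Om)$),
\[
|D^c_d\M f|(\Om)=\int_{\pi(\Om)}|\cantor{((\M f)_z)}|(\Om_z)\intd\lmlo(z)
,
\]
so it suffices to prove $|\cantor{((\M f)_z)}|(\Om_z)=0$ for $\lmlo$-a.e.\ $z\in\pi(\Om)$, and likewise for lines parallel to the other coordinate axes. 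Fix the direction $e_d$.

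For $\lmlo$-a.e.\ $z$ I would select a line $\ell=\{z\}\times\R$ along which the following hold simultaneously: $u\coloneqq(\M f)_z\in\BV_\loc(\Om_z)$ (by \cite[Theorem~3.103]{AFP}, after exhausting $\Om$ by compactly contained open sets); $\M f$ has the Lusin property on $\ell$, i.e.\ $\lmi{u(N)}=0$ whenever $\lmi{N}=0$ (\zcref{thm:continuity on lines}); the slice $(J_f)_z$ is at most countable; and $u$ is continuous at every point of $\Om_z\setminus(J_f)_z$. The last property I obtain by choosing, for each $k\in\N$, an open set $G_k\subset\Om$ with $\capa_1(G_k)<1/k$ such that $\restrict{\M f}{\Om\setminus G_k}$ is continuous at every point of $\Om\setminus(J_f\cup G_k)$ (\zcref{thm:quasicontinuity of noncentered maximal function}); by \zcref{lem:capacity and Hausdorff measure} the sets $\pi(G_k)$ have $\lmlo$-measure at most $1/(2k)$, so for $\lmlo$-a.e.\ $z$ the line $\ell$ avoids $G_k$ for all large $k$, and for any such line $u=(\M f)_z$ is continuous on $\Om_z\setminus(J_f)_z$. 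In particular $S_u\subset(J_f)_z$ is at most countable.

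Now fix such a line and show $|\cantor u|(\Om_z)=0$. The measure $|\cantor u|$ has no atoms, is singular with respect to $\lmio$, and is concentrated on $\Om_z\setminus S_u$; hence there is a Borel set $N\subset\Om_z\setminus(J_f)_z$ with $\lmi{N}=0$ and $|\cantor u|(\Om_z\setminus N)=0$. Because the absolutely continuous part of $Du$ vanishes on the $\lmio$-null set $N$ and the jump part is carried by the countable set $S_u\subset(J_f)_z$, which $N$ avoids, we get $|Du|(N)=|\cantor u|(N)=|\cantor u|(\Om_z)$. Since $u$ is continuous at each point of $N$, every $t\in N\cap\mb{\{u>\lambda\}}$ satisfies $u(t)=\lambda$, so by the coarea formula for BV functions on the line (the localized form of the identity recalled before \zcref{lem:var meas}, cf.\ \cite[Theorems 3.40, 3.59, 3.61]{AFP}, used as in the proof of \zcref{lem:var meas}),
\[
|Du|(N)=\int_{\R}\#\bigl(N\cap\mb{\{u>\lambda\}}\bigr)\intd\lambda
\le\int_{\R}\#\{t\in N:u(t)=\lambda\}\intd\lambda
=\int_{u(N)}\#\{t\in N:u(t)=\lambda\}\intd\lambda
.
\]
Since $\lmi{u(N)}=0$ by the Lusin property, this last integral — of a nonnegative function over an $\lmio$-null set — is zero. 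Hence $|\cantor u|(\Om_z)=0$, and running this over a.e.\ line in every coordinate direction yields $|D^c\M f|(\Om)=0$.

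The delicate point is that the pointwise values of a BV function off a Lebesgue-null set are not controlled by BV theory alone: a BV function can satisfy the Lusin property for a bad representative and still carry a nonzero Cantor part, so mere lower semicontinuity of $\M f$ would not suffice. One genuinely has to use that $\M f$ is continuous outside $J_f$ — a fact which already bundles together lower semicontinuity of $\M f$ and the quasicontinuity estimate — in order to force $\mb{\{u>\lambda\}}\cap N$ onto a level set and make the coarea bound effective. Alongside this, the bookkeeping of the exceptional sets $G_k$ (arranging that $\lmlo$-a.e.\ line avoids them for all sufficiently large $k$) is the other step that requires some care.
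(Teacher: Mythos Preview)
Your argument is correct (modulo a trivial fix: to get ``$\ell$ avoids $G_k$ for all large $k$'' you need $\sum_k\capa_1(G_k)<\infty$, so take $\capa_1(G_k)<2^{-k}$ rather than $1/k$; alternatively, ``for some $k$'' already suffices and follows from $\lml{\bigcap_k\pi(G_k)}=0$). The route, however, is genuinely different from the paper's.

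The paper does not invoke \zcref{thm:continuity on lines} at all. Instead it works directly with the set $H_f=\{\M f>f^{\vee}\}$ on each good line: on $(H_f)_z$ one uses \zcref{eq:maximal function is M R} together with \zcref{prop:Lipschitz continuity} to see that $(\M f)_z$ is locally Lipschitz, hence carries no Cantor part there; off $(H_f)_z\cup(J_f)_z$ one has $\M f=f^*$ with both continuous, so \zcref{lem:var meas} applied to the difference identifies $\cantor{[(\M f)_z]}$ with $\cantor{[f_z]}$ on that set, which vanishes by the hypothesis (via \zcref{eq:cantorzzero}). Your proof instead black-boxes the $H_f$ decomposition inside \zcref{thm:continuity on lines} to obtain the Lusin property on lines, and then runs a clean Banach--Zarecki style argument via the coarea formula: continuity of $u$ on the Cantor carrier $N$ forces $N\cap\mb{\{u>\lambda\}}\subset u^{-1}(\lambda)$, whence the coarea integrand is supported on the $\lmio$-null set $u(N)$. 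Your approach is more modular and makes the classical mechanism (continuous $+$ BV $+$ Lusin $\Rightarrow$ no Cantor part) explicit; the paper's is more self-contained and avoids the detour through the Lusin theorem, but both ultimately rest on the same local Lipschitz estimate on $H_f$.
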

\begin{proof}
As in the beginning of the proof of \zcref{thm:continuity on lines}, for $\lmlo$-a.e.\ line parallel to a coordinate axis,
indexed by $z\in\pi(\Om)$,
there exist $G,U$ from \zcref{lem_exceptional} which do not intersect that line.
Moreover,
\(f_z\in\BV_{\loc}(\Om_z)\)
and \zcref{eq:cantorzzero} holds.
In addition, $f^*$ and $\M f$ are continuous outside of $J_f$ on almost every line
by \zcref{prop:quasicontinuity outside jumps,thm:quasicontinuity of noncentered maximal function}.

For simplicity we only consider a line $\{(z,t):t\in \R\}$ parallel to the $d$th coordinate axis.
We consider a set $N\subset \Om_z$ with zero $1$-dimensional Lebesgue measure.
By \zcref{lem:var meas,eq:cantorzzero} we have
\begin{align*}
|\cantor{[(\M f)_z]}|(\Om_z\setminus (H_f)_z)
&=|\cantor{[(\M f)_z]}|(\Om_z\setminus ((H_f)_z\cup (J_f)_z))\\
&=|\cantor{[f_z]}|(\Om_z\setminus ((H_f)_z\cup (J_f)_z))
=0
.
\end{align*}
By \zcref{eq:maximal function is M R,prop:Lipschitz continuity} the map $\restrict{ \M f}{\Om\setminus U}$
is locally Lipschitz and thus
\[
|D[(\M f)_z]|(N\cap (H_f)_z)=0
.
\]
That means
\(
|\cantor{[(\M f)_z]}|(N)=0
.
\)
Since $\cantor{[(\M f)_z]}$ is carried by a null set, in fact $|\cantor{[(\M f)_z]}|(\Om_z)=0$.
Using again \zcref{eq:sections and jump sets 3}, we can conclude $|\cantor{\M f}|(\Om)=0$.
\end{proof}

\begin{proposition}\label{prop:jump halved}
	Suppose 
	\(f\in L^1_\loc(\Omega)\) with \(\var_\Omega f<\infty\),
	and suppose that $\M  f\in \BV_{\loc}(\Om)$.
	Then $|\jump \M f|\le \tfrac 12 |\jump f|$.
\end{proposition}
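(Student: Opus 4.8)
The plan is to reduce \zcref{prop:jump halved} to the pointwise jump estimate
\[
(\M f)^{\vee}(x)-(\M f)^{\wedge}(x)\le\tfrac12\bigl(f^{\vee}(x)-f^{\wedge}(x)\bigr)
\]
for $\smo$-a.e.\ $x\in\Om$, the main point being that this should come from comparing $\M f$ only with the ``small radii'' piece $\Ml\rho f$. Granting the estimate, note that wherever $f^{\vee}(x)=f^{\wedge}(x)$ (that is, on $\Om\setminus S_f$ by \zcref{eq:representatives outside jump set}) it forces $(\M f)^{\vee}(x)=(\M f)^{\wedge}(x)$, so $J_{\M f}\subset S_f$ up to $\smo$-null sets; since $\sm{S_f\setminus J_f}=0$ by \zcref{eq:Sf and Jf} and $|\jump{\M f}|\ll\smo$ by \zcref{eq:jump part representation}, the measure $|\jump{\M f}|$ is carried by $J_f$, and then by \zcref{eq:jump part representation,eq:representatives in jump set} (applied to $\M f$ and to $f$) together with the estimate on $J_f$,
\[
|\jump{\M f}|=\bigl((\M f)^{\vee}-(\M f)^{\wedge}\bigr)\restrictzero\smo{(J_{\M f}\cap J_f)}\le\tfrac12\bigl(f^{\vee}-f^{\wedge}\bigr)\restrictzero\smo{J_f}=\tfrac12|\jump f|.
\]

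To prove the estimate, fix $x\in\Om$ and a small $\rho>0$ with $B(x,\rho)\subset\Om$, and recall $\M f=\max\{\Ml\rho f,\Mg\rho f\}$. By \zcref{prop:Lipschitz continuity} the function $\Mg\rho f$ is locally Lipschitz near $x$, in particular continuous there; and if $g=\max\{u,v\}$ with $v$ continuous at $x$, then from the definitions of the approximate limits one checks $g^{\vee}(x)=\max\{u^{\vee}(x),v(x)\}$ and $g^{\wedge}(x)=\max\{u^{\wedge}(x),v(x)\}$, hence $g^{\vee}(x)-g^{\wedge}(x)\le u^{\vee}(x)-u^{\wedge}(x)$. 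Discarding the $\smo$-null set $S_f\setminus J_f$, the point $x$ is either a Lebesgue point or an approximate jump point of $f$, so near $x$ the function $f$ is $L^1$-close to a blow-up $w$ whose values lie in $\{f^{\wedge}(x),f^{\vee}(x)\}$; in particular $w\le f^{\vee}(x)$ and $\vint_{B(x,\sigma)}|f-w|\to0$ as $\sigma\to0$. Thus it suffices to show, for $\rho$ small,
\[
(\Ml\rho f)^{\wedge}(x)\ge\tfrac12\bigl(f^{\vee}(x)+f^{\wedge}(x)\bigr)
\qquad\text{and}\qquad
(\Ml\rho f)^{\vee}(x)\le f^{\vee}(x),
\]
since then the $\max$-inequality gives $(\M f)^{\vee}(x)-(\M f)^{\wedge}(x)\le f^{\vee}(x)-\tfrac12(f^{\vee}(x)+f^{\wedge}(x))=\tfrac12(f^{\vee}(x)-f^{\wedge}(x))$.

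For the lower bound, average $f$ over $B(y,L\lvert y-x\rvert)$ for $y$ near $x$ with $L$ large but fixed: the hyperplane through $x$ cuts this ball into two parts of relative measure $\tfrac12\pm O(1/L)$ on which $f$ is $L^1$-close to $f^{\vee}(x)$ and to $f^{\wedge}(x)$ respectively, so $f_{B(y,L|y-x|)}\ge\tfrac12(f^{\vee}(x)+f^{\wedge}(x))-O\bigl((f^{\vee}(x)-f^{\wedge}(x))/L\bigr)-o(1)$ as $y\to x$; since $\Ml\rho f(y)\ge f_{B(y,L|y-x|)}$ once $L\lvert y-x\rvert<\rho$, letting first $y\to x$ and then $L\to\infty$ gives $(\Ml\rho f)^{\wedge}(x)\ge\liminf_{y\to x}\Ml\rho f(y)\ge\tfrac12(f^{\vee}(x)+f^{\wedge}(x))$. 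For the upper bound, using $w\le f^{\vee}(x)$ we get, for $y\in B(x,\sigma)$ with $\sigma<\rho$: if $r<\sigma$ then $f_{B(y,r)}-f^{\vee}(x)\le\vint_{B(y,r)}(f-w)_+\le\M\bigl[\ind{B(x,2\sigma)}(f-w)_+\bigr](y)$, while if $\sigma\le r<\rho$ then $B(y,r)\subset B(x,2r)$ and $f_{B(y,r)}\le f^{\vee}(x)+2^d\vint_{B(x,2r)}|f-w|\le f^{\vee}(x)+\delta$ once $\rho$ is small; hence $\{\Ml\rho f>f^{\vee}(x)+2\delta\}\cap B(x,\sigma)\subset\{\M[\ind{B(x,2\sigma)}(f-w)_+]>2\delta\}$, which by the weak-type $(1,1)$ bound for the maximal function has measure $\lesssim\delta^{-1}\int_{B(x,2\sigma)}|f-w|=o(\sigma^d)$; so this set has density $0$ at $x$, and letting $\delta\to0$ yields $(\Ml\rho f)^{\vee}(x)\le f^{\vee}(x)$. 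The main obstacle is exactly this upper bound: $\M f$ can genuinely exceed $f^{\vee}(x)$ near a jump point of $f$ owing to faraway mass, and the whole role of the splitting $\M f=\max\{\Ml\rho f,\Mg\rho f\}$ is that such mass sits in the \emph{continuous} term $\Mg\rho f$ and is therefore invisible to the jump; I expect the localized weak-type bookkeeping near $x$ in this step to need the most care.
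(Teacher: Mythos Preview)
Your argument is correct and in fact streamlines the paper's proof. Both proofs reduce to the pointwise inequality $(\M f)^{\vee}(x)-(\M f)^{\wedge}(x)\le\tfrac12(f^{\vee}(x)-f^{\wedge}(x))$ at $\smo$-a.e.\ point and conclude via \zcref{eq:jump part representation}, and both obtain the lower bound $(\M f)^{\wedge}(x)\ge\tfrac12(f^{\vee}(x)+f^{\wedge}(x))$ from lower semicontinuity together with $\M f\ge f^{*}$ (your large-$L$ computation is a more explicit version of the same thing). The difference is in the upper bound. The paper first invokes the quasicontinuity machinery (\zcref{thm:quasicontinuity of noncentered maximal function}, which in turn uses \zcref{lem_exceptional,prop:weak type estimate inifinity,lem:uniform convergence of u}) to show $\sm{J_{\M f}\setminus J_f}=0$, and then at $x\in J_{\M f}\cap J_f$ does a case split: either $(\M f)^{\vee}(x)\le f^{\vee}(x)$ and the inequality is immediate, or $\kappa\coloneqq(\M f)^{\vee}(x)-f^{\vee}(x)>0$, in which case a Vitali $5r$-covering argument shows that $\{\Ml R f>f^{\vee}(x)+(1-\delta)\kappa\}\cap B(x,r)$ has small measure, forcing a sequence $y_i\to x$ with $\M f(y_i)=\Mg R f(y_i)$ and then continuity of $\Mg R f$ gives $(\M f)^{\wedge}(x)\ge(\M f)^{\vee}(x)$, so the jump vanishes.

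You instead carry the decomposition $\M f=\max\{\Ml\rho f,\Mg\rho f\}$ throughout, use the elementary fact that a maximum with a continuous function cannot increase the jump, and bound $(\Ml\rho f)^{\vee}(x)\le f^{\vee}(x)+2\delta$ directly by the weak-type $(1,1)$ inequality applied to $\ind{B(x,2\sigma)}(f-w)_+$. This is essentially the same mechanism as the paper's covering step (both control the density of $\{\Ml\rho f>f^{\vee}(x)+\textrm{const}\}$ at $x$), but packaged more cleanly: you avoid the case split, and the inclusion $J_{\M f}\subset J_f$ modulo $\smo$-null sets falls out of the pointwise estimate rather than requiring the separate quasicontinuity argument. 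One small point to make explicit: $\rho$ depends on $\delta$, so strictly speaking you prove $(\M f)^{\vee}(x)-(\M f)^{\wedge}(x)\le(\Ml{\rho(\delta)} f)^{\vee}(x)-(\Ml{\rho(\delta)} f)^{\wedge}(x)\le\tfrac12(f^{\vee}-f^{\wedge})+2\delta$ for each $\delta$ and then let $\delta\to0$; this is fine because the max-inequality holds for every $\rho$, but it is worth saying that you are not claiming $(\Ml\rho f)^{\vee}(x)\le f^{\vee}(x)$ for a single fixed $\rho$.
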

\begin{proof}
	Take sets $G,U$ from \zcref{lem_exceptional}.
	Given $G$, by applying \zcref{lem:uniform convergence of u} to a constant function, we obtain
	\begin{equation}
	\label{eq_Gfillsball}
		\frac{\mathcal L(G\cap B(x,r))}{\mathcal L(B(x,r))}
		\to 0\quad\textrm{as }r\to 0
	\end{equation}
	locally uniformly for $x\in \Om\setminus U$ after slightly enlarging $U$ while still respecting $\capa_1(U)<\varepsilon$.
	By \zcref{thm:quasicontinuity of noncentered maximal function}, we can also assume that
	$\restrictl{\M f}{\Om\setminus G}$ is continuous at every point $x\in \Om\setminus(G\cup J_f)$.

	In particular, for any $x\in \Om\setminus (U\cup J_f)$ there exists a $c\in\mathbb{R}$ for which
\(\restrictl{(\M f-c)}{B(x,r)\cap\Om\setminus G}\rightarrow0\) uniformly as $r\rightarrow0$.
By \zcref{eq_Gfillsball} the set $G$ where this convergence happens eventually fills all of $B(x,r)$.
This is not compatible with $x\in J_{(\M f)_z}$, which, due to \zcref{eq_approximatejumpdefinition}, would require the blow-up of $\M f$ to converge to two distinct values on two halves of the ball.
	Since $\capa_1(U)$ can be made arbitrarily small, we can conclude
	$\capa_1(J_{\M f}\setminus J_f)=0$ 
which by \zcref{eq:null sets of capa and Hausdorff} implies
$\sm{J_{\M f}\setminus J_f}=0$.

Suppose $x\in J_{\M f}\cap J_f$. We have
\[
\M f(x)\ge f^*(x)=\frac{f^{\wedge}(x)+f^{\vee}(x)}{2},
\]
and by the lower semicontinuity of $\M f$ also
\[
(\M f)^{\wedge}(x)\ge \frac{f^{\wedge}(x)+f^{\vee}(x)}{2}.
\]
If $(\M f)^\vee(x)\le f^{\vee}(x)$, then we get
\begin{equation}\label{eq:less than half}
(\M f)^\vee(x)-(\M f)^\wedge(x)\le \tfrac 12 (f^\vee(x)-f^\wedge(x)).
\end{equation}
Otherwise, for $\kappa\coloneqq(\M f)^\vee(x)-f^{\vee}(x)>0$,
for some half-space $H$ with $x$ on its boundary
and for all $0<\delta<1/2$ we have
\begin{equation}\label{eq:half space H}
\lim_{r\to 0}\frac{\mathcal L(B(x,r)\cap H\cap \{\M f>f^{\vee}(x)+(1-\delta)\kappa\})}{\mathcal L(B(x,r)\cap H)}
=
1
.
\end{equation}
Fix such $\delta$.
We want to show that there is an $R>0$ and sequence $H\ni y_i\to x$ with 
\begin{align}
\label{eq:Rsequence}
\M f(y_i)
&>
f^{\vee}(x)+(1-\delta)\kappa
,&
\M f(y_i)
&=
\Mg R f(y_i)
\end{align}
Take $R>0$ sufficiently small so that for all $0<r\le R$,
\begin{equation}\label{eq:45 d}
\frac{1}{\delta\kappa}\vint_{B(x,3r)}(f-f^{\vee}(x)-(1-2\delta)\kappa)_+<\frac{1}{45^d}.
\end{equation}
Fixed such $r$, let
 $\B$ be the set of balls $B$ with center in $B(x,r)$ with radius at most $R$ and \(f_B>f^\vee(x)+(1-\delta)\kappa\).
Then
\[
B(x,r)\cap\{\Ml {R}f >f^\vee(x)+(1-\delta)\kappa\}\subset\bigcup\B.
\]
By the $5$-covering lemma we find a disjoint subcover of $\B$, denoted by $\{B_j\}_{j}$,
such that $\bigcup_{j}5B_j$ still contains $B(x,r)\cap\{\M f>f^\vee(x)+(1-\delta)\kappa\}$.
If there were a ball $B_j$ with radius $r_j\ge 2r$, we would have
\begin{align*}
&
\frac{1}{\delta\kappa}\vint_{B(x,3r_j)}(f-f^{\vee}(x)-(1-2\delta)\kappa)_+
\\	
&\qquad\ge \frac{1}{2^d\delta\kappa}\vint_{B_j}(f-f^{\vee}(x)-(1-2\delta)\kappa)_+
\ge \frac{1}{2^d},
\end{align*}
contradicting \zcref{eq:45 d}.
Thus all balls $B_j$ have radius at most $2r$, and so
\begin{align*}
&
\mathcal L(B(x,r)\cap\{\Ml {R}f>f^\vee(x)+(1-\delta)\kappa\})
\le
5^d\sum_j\mathcal L(B_j)
\\
&\qquad\le
5^d\frac{1}{\delta\kappa}\sum_j\int_{B_j}(f-f_{B_j}-(1-2\delta)\kappa)_+
\\
&\qquad\le
5^d\frac{1}{\delta\kappa}\int_{B(x,3r)}(f-f_{B_j}-(1-2\delta)\kappa)_+
<\frac{1}{3}\mathcal L(B(x,r)).
\end{align*}
Due to \zcref{eq:half space H}
we obtain \zcref{eq:Rsequence}.
By the continuity of $\Mg Rf$ we can conclude
\[
\liminf_{y\rightarrow x}
\M f(y)
\geq
\liminf_{y\rightarrow x}
\Mg Rf(y)
= \lim_{i\to\infty}\Mg R f(y_i)
\ge f^{\vee}(x)+(1-\delta)\kappa.
\]
Letting $\delta\to0$ we obtain
\[
(\M f)^\wedge(x)
\ge
\liminf_{y\rightarrow x}
\M f(y)
\ge f^{\vee}(x)+\kappa
= (\M f)^\vee(x)
\]
which means \zcref{eq:less than half} holds also in this case as the left hand side is zero.
Finally, we can conclude the proof from \zcref{eq:representatives in jump set}
and \zcref{eq:jump part representation}.
\end{proof}

\begin{proof}[Proof of \zcref{thm:main two}]
	The estimate $|\jump{\M f}|\leq\frac12|\jump f|$ is proved in \zcref{prop:jump halved}.
	
	The second claim follows by combining \zcref{thm:Cantor removed}
	and \zcref{thm:main}.
	
	For the third claim, suppose that $|\jump f|(\Om)=0$.
	Now by the first claim, the singular part  $|\jump{\M f}|$
	vanishes and so $\M f\in W^{1,1}_{\loc}(\Om)$.
	Consider the $d$:th coordinate direction; the other directions are analogous.
	Since $\M f\in W^{1,1}_{\loc}(\Om)$, the function $\M f(z,\cdot)$ is in
	the class $W^{1,1}_{\loc}(\Om_z)$ for almost every $z\in \pi(\Om)$.
	By \zcref{thm:quasicontinuity of noncentered maximal function},
	\zcref{lem:capacity and Hausdorff measure} and \zcref{eq:jump part representation},
	$\M f(z,\cdot)$ is continuous for almost every $z\in \pi(\Om)$.
	Thus $\M f(z,\cdot)$ is absolutely continuous for
	almost every $z\in \pi(\Om)$, and in conclusion $\M f$ is ACL.
\end{proof}

\printbibliography

\end{document}